 \renewcommand*{\backref}[1]{}
\renewcommand*{\backrefalt}[4]{%
 	\ifcase #1 (Not cited).%
	\or        (Cited on page~#2).%
	\else      (Cited on pages~#2).%
	\fi}
\newcommand{\R}{\mathbb{R}}
\newcommand{\C}{\mathbb{C}}
\renewcommand{\H}{\mathbb{H}}
\newcommand{\pf}{\mathrm{pf}}
\newcommand{\Ric}{\mathrm{Ric}}
\renewcommand{\epsilon}{\varepsilon}
\renewcommand{\phi}{\varphi}
\renewcommand{\Re}{\mathrm{Re}}
\theoremstyle{plain}
\newtheorem{thm}{Theorem}[section]
\newtheorem{prop}[thm]{Proposition}
\newtheorem{lem}[thm]{Lemma}
\newtheorem{cor}[thm]{Corollary}
\newtheorem{quest}[thm]{Question}
\theoremstyle{definition}
\newtheorem{defn}[thm]{Definition}
\newtheorem{rmk}[thm]{Remark}
\newtheorem{es}[thm]{Example}
\title{Special metrics in hypercomplex Geometry}
\begin{document}
 
\thanks{This work was supported by GNSAGA of INdAM}
\subjclass[2020]{53C26, 32W50, 53C25, 53C56}

\address{(Elia Fusi) Dipartimento di Matematica G. Peano, Universit\`a di Torino, Via Carlo Alberto 10, 10123, Torino, Italy.}
\email{elia.fusi@unito.it}

\address{(Giovanni Gentili) Dipartimento di Matematica G. Peano, Universit\`a di Torino, Via Carlo Alberto 10, 10123, Torino, Italy.}
\email{giovanni.gentili@unito.it}

\author{Elia Fusi and Giovanni Gentili}


\begin{abstract}
We investigate the existence and geometric properties of special hyperhermitian metrics.
First of all, we characterise hypercomplex structures with Obata holonomy in  ${\rm SL}(n, \H)$ in terms of the existence of \emph{quaternionic Gauduchon metrics} together with the vanishing of a hypercomplex cohomological invariant. In view of this, the quaternionic Gauduchon and quaternionic balanced conditions are investigated at length: we describe their properties and determine criteria for their existence. Furthermore, we prove an incompatibility result concerning \emph{strong HKT}  and  \emph{balanced hyperhermitian metrics}, confirming an open conjecture by Fino and Vezzoni in the hypercomplex framework. Finally, we introduce an Einstein-type condition,  determining basic properties,  obstructions and providing examples.  In particular, we show that Joyce's manifolds always admit such type of metrics.
\end{abstract}

\maketitle

\section{Introduction}
  
The investigation of special metrics has become a very important research field in   complex non-K\"ahler Geometry, where there is a spreading interest in the study of Hermitian metrics satisfying weaker conditions than the K\"ahler one. In Hypercomplex Geometry, the metric notion that is most closely believed to be the analogue of the K\"ahler condition is that of a \emph{hyperk\"ahler  with torsion metric}, in short \emph{HKT}, introduced by Howe and Papadopoulos \cite{Howe-Papadopoulos (2000)}. The birth of HKT metrics comes from the study of supersymmetry with Wess-Zumino term, black holes and de Sitter supergravity, see \cite{Grover-Gutowski-Herdeiro-Sabra (2009),Gutowski-Papadopoulos,Gutowski-Sabra (2011)} and references therein. Very soon, starting from  \cite{Grantcharov-Poon (2000),Verbitsky (2002)}, HKT metrics gained particular interest also from the mathematical perspective and many properties are now known \cite{Alesker-Verbitsky (2006),BDV,Barberis-Fino,DF1,DF2,Fino-Grantcharov,IM,Ivanov-Petkov,Verbitsky (2009)}. The mathematical appeal of HKT manifolds has also been renewed by the, yet to be proven, quaternionic Calabi conjecture formulated by Alesker and Verbitsky \cite{Alesker-Verbitsky (2010)}.

\smallskip
Recall that a hypercomplex manifold $ (M^n,\mathsf{H}) $ is the data of a $ 4n $-dimensional smooth manifold $ M $ and a $ \mathrm{GL}(n,\H) $-structure $ \mathsf{H} $, where $ \H $ is the skew field of quaternions. 
A Riemannian metric $ g $ on $M$ which is Hermitian with respect to every $ L\in \mathsf{H} $ is called \emph{hyperhermitian}. Fixing  a pair of anticommuting  complex structures $ I, J\in \mathsf{H} $, the metric $ g $ can be equivalently described by means of a form $ \Omega $ of type $(2,0)$ with respect to $ I $, which is positive in a suitable sense. The metric $g$ is HKT if the Bismut connections of $ (g,L) $ all coincide, for $ L\in \mathsf{H} $. Thanks to \cite{Grantcharov-Poon (2000)}, this is equivalent to
\begin{equation}\label{eq:HKT}
\partial \Omega=0\,,
\end{equation}
where $ d=\partial+\bar \partial $ is the splitting of the exterior differential induced by $ I $. 

The fact that all complex structures in $ \mathsf{H} $ are integrable is equivalent to the existence of a unique torsion-free connection $ \nabla^{\mathrm{Ob}} $ such that $ \nabla^{\mathrm{Ob}}L=0 $, for all $ L\in \mathsf{H} $, called the \emph{Obata connection} \cite{Obata (1956)}. 
By definition,  ${\rm Hol}(\nabla^{{\rm Ob}})\subseteq \mathrm{GL}(n,\H) $. Whenever the holonomy lies in the commutator subgroup $ \mathrm{SL}(n,\H) $,  $ (M,\mathsf{H}) $ is called in the literature an {\em $ \mathrm{SL}(n,\H) $-manifold}, see \cite{GLV,Ivanov-Petkov,Lejmi-Weber,Lejmi-Tardini,Verbitsky (2007),Verbitsky (2010)}. The $ \mathrm{SL}(n,\H) $ condition implies that the canonical bundle $ K(M,L):=\Lambda^{2n,0}_LM $ is holomorphically trivial for all $L\in \mathsf{H}$, see \cite{Verbitsky (2007)}. Verbitsky proved that if on a compact HKT manifold there exists an $L\in \mathsf{H}$ such that $K(M,L)$ is holomorphically trivial, then the manifold is $\mathrm{SL}(n,\H)$, see \cite[Theorem 2.3]{Verbitsky (2007)}. On the other hand, Andrada and Tolcachier recently showed that dropping the HKT assumption Verbitsky's result fails \cite[Example 6.3]{AT}. Hence, it is natural to seek weaker metric conditions for which the equivalence obtained in \cite[Theorem 2.3]{Verbitsky (2007)} still holds. In this direction, a natural candidate is provided by \cite[Proposition 16]{GLV}, where it is shown that on compact ${\rm SL}(n, \H)$-manifolds there always exists a \emph{quaternionic Gauduchon metric}, i.e. a hyperhermitian metric satisfying
\begin{equation}\label{eq:qPaul}
\partial \partial_J \Omega^{n-1}=0\,,
\end{equation}
where $ \partial_J:=J^{-1}\bar \partial J $. Note that condition \eqref{eq:qPaul} does not depend on the choice of anticommuting $I,J\in \mathsf{H}$, see Section \ref{Sec:THM1.1}. Furthermore, the $\mathrm{SL}(n,\H)$ condition and the existence of special hyperhermitian metrics are deeply related to an invariant of the hypercomplex structure which we call the \emph{first quaternionic Bott-Chern class}. This class, denoted $c_1^{\mathrm{qBC}}(M,I,J)$, is defined as the quaternionic Bott-Chern cohomology class $[\partial_J \alpha_\Omega]_{\mathrm{qBC}}$, where $\alpha_\Omega$ is the $1$-form determined by the relation $\partial \bar \Omega^n=\alpha_\Omega \wedge \bar \Omega^n$. Such a class does not depend on the choice of the hyperhermitian metric. Furthermore, even if the definition of  $c_1^{\mathrm{qBC}}(M,I,J)$ depends on $I$ and $J$, the condition of $c_1^{\mathrm{qBC}}(M,I,J)$ being positive, zero, or negative does not, see Proposition \ref{garullabarulla}. For this reason, we can simply write $c_1^{\mathrm{qBC}}(M,\mathsf{H})>0$, $c_1^{\mathrm{qBC}}(M,\mathsf{H})=0$, and $c_1^{\mathrm{qBC}}(M,\mathsf{H})<0$, accordingly. 

We can then state our first main theorem:


\begin{thm}\label{Thm:main1.2}
A compact hypercomplex manifold $(M^n,\mathsf{H})$ is an $\mathrm{SL}(n, \H)$-manifold if and only if
admits a quaternionic Gauduchon metric and its first quaternionic Bott-Chern class vanishes.
\end{thm}

\noindent Note that the theorem implies that a compact hypercomplex manifold $(M^n,\mathsf{H})$ admitting a quaternionic Gauduchon metric is $ \mathrm{SL}(n,\H) $ if and only if there exists $L\in \mathsf{H}$ such that $K(M,L)$ is holomorphically trivial, because the existence of such an $L$ implies $c_{1}^{{\rm qBC}}(M,{\sf H})=0$, see Remark \ref{rmk:implications}.

\noindent Under the HKT assumption, it is known that the $ \mathrm{SL}(n,\H) $ condition implies the validity of the $ \partial \partial_J $-Lemma \cite[Theorem 6]{GLV}. As a consequence of Theorem \ref{Thm:main1.2} we achieve the equivalence of these two properties.

\begin{cor}\label{Cor:main1.3}
Let $ (M^n,\mathsf{H}) $ be a compact hypercomplex manifold admitting a compatible HKT metric. Then, the $ \mathrm{SL}(n,\H) $ condition and the $ \partial \partial_J $-Lemma are equivalent.
\end{cor}

Theorem \ref{Thm:main1.2} motivates the study of quaternionic Gauduchon metrics. Recall that a Hermitian metric $g$ on a complex manifold $(M^m,I)$ is called {\em Gauduchon} if its fundamental form satisfies $\partial \bar \partial \omega^{m-1}=0$.
On compact hypercomplex manifolds, in view of \cite{cPaul} and Lemma \ref{lem:gaudu},  in the  conformal class of a hyperhermitian metric  there always exists a metric $g_{{\rm G}}$ which is Gauduchon with respect to all $L\in \mathsf{H}$. An analogous  result does not hold for  quaternionic Gauduchon metrics \eqref{eq:qPaul}, as a hypercomplex manifold may not admit any such metric, see  Example \ref{Ex:Andrada-Tolcachier}. In the next result we characterize the existence of such metrics in a fixed hyperhermitian conformal class $\{\Omega\}$. As a by-product, we  obtain  an existence criterion for \emph{quaternionic balanced metrics}, i.e. those satisfying $\partial \Omega^{n-1}=0$. 

\begin{thm}\label{Thm:main1.1}
Let $(M^n, \mathsf{H},g)$ be a compact hyperhermitian manifold. Then, $g$ is conformal to a unique quaternionic Gauduchon metric of unit volume if and only if $g_{{\rm G}}$ satisfies 
\begin{equation}\label{eqn:neccond1.2}
\Gamma^{{\rm Bis}}(\{\Omega_{\mathrm{G}}\}):=\int_Ms^{{\rm Bis}}(\Omega_{\mathrm{G}})\frac{\Omega_{\mathrm{G}}^n\wedge \bar \Omega_{\mathrm{G}}^{n}}{(n!)^2}\le 0\,
\end{equation}
and 
\begin{equation}\label{eqn:neccond1.1}
s^{\mathrm{Ch}}(\Omega_{\mathrm{G}})- 2\lvert \alpha_{\Omega_{\mathrm{G}}}\rvert^2_{g_{\mathrm{G}}} \text{ either vanishes identically or it changes sign.}
\end{equation}

Moreover, $g$ is conformal to  a unique quaternionic balanced metric of unit volume if and only if  $g_{{\rm G}}$ satisfies     $\Gamma^{{\rm Bis}}(\{\Omega_{\mathrm{G}}\})=0$ and \eqref{eqn:neccond1.1}.
\end{thm}

\noindent Here, fixed anticommuting $I,J\in \mathsf{H}$, we denote $\Omega_{\rm G}\in \Lambda^{2,0}_IM$ the form associated to $g_{\rm G}$,   while $ s^{\mathrm{Ch}}(\Omega_{\mathrm{G}}) $ and $ s^{{\rm Bis}}(\Omega_{\mathrm{G}}) $ are the Chern and Bismut scalar curvatures of the Gauduchon metric, respectively, which turn out to be independent from the choice of $ I $ in $ \mathsf{H} $, see Proposition \ref{Prop:scalar}. We recall that the quantity $\Gamma^{{\rm Bis}}(\{\Omega_{{\rm G}}\})$, introduced in  \cite{Barbarone}, 
is called the \emph{Gauduchon-Bismut degree} of the conformal class of $g_{{\rm G}}$.\\
We will also characterise the existence of quaternionic Gauduchon and quaternionic balanced metrics in terms of currents, see Proposition \ref{qPaulcurrent}. As we shall see, the existence of these types of special metrics imposes some restrictions on the manifold regarding the Chern and Bismut scalar curvatures as well as the Kodaira dimension.

\smallskip

The vanishing of the first quaternionic Bott-Chern class also gives a natural obstruction to the existence of {\em strong} HKT metrics on hypercomplex manifolds, see Proposition \ref{Prop:FV}.
An HKT metric is called \emph{strong} HKT if its Bismut torsion is closed, see \cite{Barberis-Fino,BFG,BFGV,Grantcharov-Poon (2000),Ivanov-Papadopoulos,Opfermann-Papadopoulos}.
Such metrics are quite rare and they impose several restrictions on the hypercomplex structure. For instance, Verbitsky showed in \cite[Theorem 5.11]{Verbitsky (2009)} that in the compact case, whenever $ n\geq 3 $, a $ (2,0) $-quaternionic Bott-Chern cohomology class cannot contain both a strong HKT and a balanced HKT metric, unless the manifold is actually hyperk\"ahler. Here we shall  improve this incompatibility result by proving the following.

\begin{thm}\label{Thm:main1.4}
Let $(M,\mathsf{H})$ be a compact hypercomplex manifold with $c_1^{\mathrm{qBC}}(M,{\sf H})\geq 0$ and $c_1^{\mathrm{qBC}}(M,{\sf H})\neq 0$. Then, $(M,\mathsf{H})$ does not admit any balanced hyperhermitian metric. In particular, if $(M,\mathsf{H})$ admits a strong HKT metric and a balanced hyperhermitian metric, then it is hyperk\"ahler.
\end{thm}

\noindent This theorem belongs to the collection of evidences towards the validity of the Fino-Vezzoni conjecture \cite[Problem 3]{FV}, which has attracted a great deal of attention over the years and several partial confirmations are now available, e.g. \cite{CZ,Ch,FGV,FP,FV,FV2,FS,GP}.

\smallskip
A strong HKT metric cannot be Chern-Ricci flat unless it is hyperk\"ahler and it is quite natural to understand which symmetries can be imposed on the Chern curvature. In this direction, we introduce an adaptation of the Einstein condition in the hyperhermitian setting by defining a  \emph{Chern-Einstein} metric as a hyperhermitian metric $g$ satisfying   
\[
\frac{\Ric^{\mathrm{Ch}}_{\omega_I}-J\Ric^{\mathrm{Ch}}_{\omega_I}}{2}=\lambda \omega_I\,, \quad \lambda \in C^\infty(M,\R)\,,
\]
where $ \omega_I=g(I\cdot,\cdot) $ and $ \Ric^{\mathrm{Ch}}_{\omega_I} $ is the corresponding Chern-Ricci form. In the compact HKT case, $ \lambda $ is necessarily a non-negative constant, in contrast with the classical K\"ahler-Einstein case. HKT-Einstein metrics with $\lambda > 0$ are such that the Lee form is a potential $1$-form in the terminology of \cite[Definition 4]{OPS}. Admitting a closed potential $1$-form is a necessary condition for having a $D(2, 1;-1) $-symmetry, which is a degeneration of the $D(2,1;\alpha)$-symmetry, studied in \cite{Michelson-Strominger}. On the other hand, when $ \lambda=0 $,  these metrics are actually balanced  HKT and, from the point of view of the quaternionic Calabi conjecture, they play the role of K\"ahler Calabi-Yau metrics on compact HKT $ \mathrm{SL}(n,\H) $-manifolds, see \cite{Verbitsky (2009)}. 
On the contrary, an HKT-Einstein metric with non vanishing Einstein constant can only live on manifolds that are not $ \mathrm{SL}(n,\H) $. 


 As it is observed in \cite{BDV},  Joyce hypercomplex manifolds are never ${\rm SL}(n, \mathbb H), $ thus providing candidates on which positive Chern-Einstein metrics may exist.
Recall that direct products of a compact semisimple Lie group $G$ with a  torus $T^k$ of suitable dimension admit left-invariant hypercomplex structures, see \cite{Joyce, SSTV}. We will call $G\times T^k $ equipped with one of these hypercomplex structures $\mathsf{H} $ a {\em Joyce hypercomplex manifold}.  Opfermann and Papadopoulos \cite{Opfermann-Papadopoulos} showed that with some choices of $\mathsf{H}$ the Cartan-Killing form can be extended to a compatible strong HKT metric (see also  \cite{Grantcharov-Poon (2000)}). However, it remains open to determine whether a general Joyce hypercomplex manifold carries HKT metrics. The next theorem settles this problem while also providing a large class of  HKT–Einstein manifolds with positive Einstein factor.

\begin{thm}\label{Thm:main1.5}
Let $ (G\times T^k,\mathsf{H}) $ be a Joyce hypercomplex manifold. Then, there exists an invariant HKT-Einstein metric with positive Einstein constant compatible with $\mathsf{H}$.
\end{thm}


\smallskip
Let us now explain in detail the organisation of the paper. In the first section, we will collect some preliminaries together with well-known results and facts in Hypercomplex Geometry. In particular, Subsection \ref{Sec:ab} will be dedicated to the study of the canonical forms $\alpha$ and $\beta$, which will be a fundamental tool throughout the paper. 
We will relate them with some known quantities such as the Lee form, the first Chern-Ricci and the Bismut-Ricci form. \\
In Section \ref{Sec:THM1.2}, we will define the first quaternionic Bott-Chern class of a hypercomplex manifold and  prove Theorem \ref{Thm:main1.2}, along with other properties of quaternionic Gauduchon and quaternionic balanced metrics.\\
In Section \ref{Sec:THM1.1}, the existence problem for quaternionic Gauduchon and quaternionic balanced metrics will be taken into account. In this section, we prove Theorem \ref{Thm:main1.1} and determine a sufficient condition for a quaternionic Gauduchon metric to exist in terms of the existence of a suitable $\bar \partial\bar \partial_J$-closed $(2n,0)$-form. \\
In Section \ref{Sec:sHKT}, we will be focusing our attention on strong HKT metrics, proving Theorem \ref{Thm:main1.4}, while, in Section \ref{Sec:Einst}, we will study the Chern-Einstein condition on hyperhermitian metrics deriving their fundamental properties. Here we also establish Theorem \ref{Thm:main1.5}. \\
The theoretical framework is enriched by some examples and a couple of constructions, collected in Section \ref{Sec:Ex}. 
The first one is an adaptation of an idea of Arroyo and Nicolini \cite[Section 5]{AN} and works on nilpotent Lie algebras, while the second was architectured by Barberis and Fino \cite{Barberis-Fino} and works with any Lie algebra. In this section, we also present some non-compact examples of HKT-Einstein manifolds with negative Einstein factor.

\medskip 

\noindent {\bf Acknowledgement.} We wish to express our gratitude to L. Vezzoni for constant encouragement and for taking interest in our paper. We also thank him together with L. Bedulli for their useful suggestions regarding the exposition. We are grateful to L. Bedulli and L. Marcocci for useful discussions regarding Theorem \ref{Thm:main1.5}. The authors wish to thank A. Fino, G. Grantcharov, M. Lejmi  and M. Sroka for their interest and their comments which have greatly improved the paper. The definition of the Einstein condition was suggested to the second-named author by M. Verbitsky, to whom he is deeply grateful.

\section{Preliminary results}\label{Sec:pre}

\subsection{Hyperhermitian structures}
Let $ (M^n,\mathsf{H}) $ be a hypercomplex manifold of real dimension $4n$, i.e. $\mathsf{H}$ is a $ 2 $-sphere of complex structures. Fixed a pair of anticommuting $I,J\in \mathsf{H}$, we have
\[
\mathsf{H}=\{ aI+bJ+cK \mid (a,b,c)\in S^2 \}\,,
\]
where $ K=IJ $. We emphasise that the role of $ (I,J) $ is not preferential and we can replace it with any pair of anticommuting complex structures in $ \mathsf{H} $. On the other hand, throughout the paper, we will always consider a fixed basis $ (I,J,K) $ for $ \mathsf{H} $. 

We will denote with $ \Lambda^{p,q}_LM $ the space of $ (p,q) $-forms with respect to $ L\in \mathsf{H} $. A Riemannian metric $g$ that is $L$-Hermitian for any $ L\in \mathsf{H} $ is called \emph{hyperhermitian}. Let $ \omega_L $ be the fundamental form of the Hermitian pair $(g,L)$. Evidently, we have that a $ I $-Hermitian metric $ g $ is hyperhermitian if and only if $ \omega_I $ is $ J $-anti-invariant. The hyperhermitian structure can also be completely described in terms of the following distinguished form:
\begin{equation}\label{eq:omeghe}
    \Omega:=\frac{\omega_{J}+\sqrt{-1}\omega_K}{2}\in \Lambda^{2,0}_IM\,.
\end{equation}
The form $ \Omega $ satisfies the following properties:
\begin{equation}\label{eq:q-pos}
J \bar \Omega= \Omega \,, \quad \Omega(X,J X)> 0\,, \quad  X\in TM\setminus \{0\}\,, \qquad \frac{\Omega^n \wedge \bar \Omega^n}{(n!)^2}=\frac{\omega^{2n}_{I}}{(2n)!}\,.
\end{equation}
Conversely, any $ \Omega \in \Lambda^{2,0}_IM $ satisfying the first two properties in \eqref{eq:q-pos} induces a hyperhermitian metric. In view of this correspondence, we shall often say that $ \Omega $ is a hyperhermitian metric, by a slight abuse of language.


Let $ d=\partial + \bar \partial  $ be the splitting of the exterior differential induced by $ I $. We will need  the following twisted operator $\partial_J:=J^{-1}\partial J$,  introduced in \cite{Verbitsky (2002)}.
Evidently,  $ \partial_J^2=0 $ and it is easy to check that $ \partial \partial_J=-\partial_J\partial$,  see e.g. \cite[Lemma 2.12]{Dinew-Sroka}. In view of these properties, highly relevant cohomology groups arise in the hypercomplex setting, see \cite{GLV}.
\begin{defn} Let $(M, \mathsf{H})$ be a hypercomplex manifold. The \emph{quaternionic Bott-Chern} and \emph{quaternionic Aeppli} cohomology groups are, respectively, defined as:
$$
H^{*, *}_{{\rm qBC}}(M)=\frac{\ker \partial\cap \ker \partial_J}{{\rm Im}\, \partial \partial_J}\,,\quad H^{*, *}_{{\rm qA}}(M):=\frac{\ker \partial\partial_J}{{\rm Im}\, \partial + {\rm Im}\,\partial_J}\,.
$$
\end{defn}

\subsection{Positive forms and currents}
Let $ (M,\mathsf{H}) $ be a hypercomplex manifold. A differential form $ \gamma \in \Lambda^{2p,2q}_IM $ is called \emph{q-real} if $ J \bar \gamma=\gamma $ and \emph{q-semipositive} (resp. \emph{q-positive}) if, additionally,
\[
\gamma(X_1,JX_1,\dots,X_{p+q},JX_{p+q})\geq 0\,\quad (\text{resp. }>0), 
\]
for any non-zero $ X_1,\dots,X_{p+q}\in TM $. In particular,  \eqref{eq:q-pos} is saying that $ \Omega $ is q-real and q-positive.\\
It will be useful to observe the following well-known fact. The proof is essentially the same as in \cite[Proof of Theorem 4.7]{Mich} adapted to the hypercomplex case, therefore we omit it.

\begin{lem}\label{Lem:bijection}
Let $ (M^n,\mathsf{H}) $ be a hypercomplex manifold, then the $(n-1)$\textsuperscript{th} wedge power is a bijective correspondence between the cone of q-positive $(2,0)$-forms and the cone of q-positive $(2n-2,0)$-forms with respect to $ I $.
\end{lem}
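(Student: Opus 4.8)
The plan is to reduce the statement to a pointwise exercise in quaternionic linear algebra. Both q-positivity and the wedge power are defined fiberwise, so it suffices to fix $p\in M$ and work on the single quaternionic vector space $V=T_pM$ equipped with $I,J,K$; the correspondence for forms on $M$ then follows fiber by fiber. I would first record that the $(n-1)$st power does land in the target cone. If $\Omega$ is a q-positive $(2,0)$-form then $\Omega^{n-1}$ is q-real: since $J$ acts on $\Lambda^\bullet V^*$ as an algebra homomorphism (because $(L\eta)(X_1,\dots,X_k)=\eta(LX_1,\dots,LX_k)$ respects the wedge), and conjugation does too, one gets $J\overline{\Omega^{n-1}}=(J\bar\Omega)^{n-1}=\Omega^{n-1}$ by \eqref{eq:q-pos}.

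The core computational step is a quaternionic spectral normal form. By the quaternionic spectral theorem (equivalently, the reduction of the hyperhermitian frame group to $\mathrm{Sp}(n)$), any q-positive $(2,0)$-form can be written, in suitable $I$-holomorphic coordinates $(z^1,\dots,z^{2n})$ adapted to the $\H$-structure, as $\Omega=\sum_{a=1}^n \lambda_a\,\Theta_a$ with $\Theta_a=dz^{2a-1}\wedge dz^{2a}$ and $\lambda_a>0$. For such $\Omega$ one computes $\tfrac{1}{(n-1)!}\Omega^{n-1}=\sum_{a}\bigl(\prod_{b\ne a}\lambda_b\bigr)\,\widehat\Theta_a$, where $\widehat\Theta_a$ omits the $a$th factor; this is manifestly q-positive, completing the forward direction, with all coefficients $\prod_{b\ne a}\lambda_b>0$.

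For bijectivity I would exploit the natural duality placing the two cones on equal footing: the pairing $\Lambda^{2,0}_IV^*\times\Lambda^{2n-2,0}_IV^*\to\Lambda^{2n,0}_IV^*$ combined with the q-real structure $J\bar{\,\cdot\,}$ identifies \emph{both} cones with the cone of positive-definite quaternionic-Hermitian forms on $V$, the dimension count $n(2n-1)$ on each side confirming these are genuine cone isomorphisms. Under these identifications the wedge map becomes, in a frame diagonalizing source and target simultaneously, the map $(\lambda_1,\dots,\lambda_n)\mapsto(\mu_1,\dots,\mu_n)$ with $\mu_a=\prod_{b\ne a}\lambda_b$, which is a bijection of the positive orthant $(0,\infty)^n$ onto itself whose inverse is recovered from $\prod_a\mu_a=\bigl(\prod_b\lambda_b\bigr)^{n-1}$. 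Injectivity and surjectivity of the $(n-1)$st power then follow.

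The step I expect to be the main obstacle is guaranteeing compatibility of the two diagonalizations, namely that given a q-positive $\Psi\in\Lambda^{2n-2,0}_IV^*$ one can choose the diagonalizing frame so that the reconstructed $\Omega$ satisfies $\Omega^{n-1}=(n-1)!\,\Psi$ as forms, not merely on diagonal entries, and that $\Omega$ is independent of the choices; this is precisely where the quaternionic rather than merely complex linear algebra must be handled with care. I would either verify this by a direct computation in the normal form, or bypass it entirely by a soft argument: the map $\sigma=\tfrac{1}{(n-1)!}(\,\cdot\,)^{n-1}$ between the two connected open convex cones is smooth, proper, and its differential $\eta\mapsto\Omega^{n-2}\wedge\eta$ is an isomorphism of q-real $(2,0)$- onto q-real $(2n-2,0)$-forms at every q-positive $\Omega$ by the quaternionic Lefschetz isomorphism (wedging a nondegenerate $(2,0)$-form $\Omega^{n-2}$ against $\Lambda^{2,0}$ on the $2n$-dimensional complex space $(V,I)$). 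A standard degree/covering argument for a proper local diffeomorphism between contractible cones then yields that $\sigma$ is a global diffeomorphism.
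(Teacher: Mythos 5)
Your proposal is correct and is essentially the paper's own proof: the paper simply cites Michelsohn's Theorem 4.7 and adapts it to the hyperhermitian setting, and Michelsohn's argument is exactly your normal-form route — reduce pointwise, diagonalize, and solve the system $\mu_a=\prod_{b\neq a}\lambda_b$ over the positive orthant. The one step you flag as the main obstacle (diagonalizing a q-positive $(2n-2,0)$-form $\Psi$ compatibly with the $\mathsf{H}$-structure) is resolved in Michelsohn's way, by diagonalizing instead the associated quaternionic Hermitian pairing $(\xi,\eta)\mapsto \xi\wedge J\bar\eta\wedge\Psi$ on $\Lambda^{1,0}_I$ via the $\mathrm{Sp}(n)$ spectral theorem: since in any adapted coframe this pairing records \emph{all} components of $\Psi$, its diagonality forces $\Psi=\sum_a\mu_a\,\widehat{\Theta}_a$ with no off-diagonal terms, after which your inversion of the product map finishes the argument (and renders the soft properness/degree alternative, whose properness you did not verify, unnecessary).
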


In what follows, we will denote with $ \mathcal{D}^{p,q}_I(M) $ the space of currents of bidegree  $ (p,q) $ with respect to $ I $. The action of $L\in \mathsf H $,   $ \partial$ and $\partial_J $ are extended to $ (p,q) $-currents by duality, see \cite[Section 6]{GLV} for details. Finally, we say that $ T\in \mathcal{D}_I^{2p,2q}(M) $ is \emph{q-real} if $ J\bar T=T $. If further $ T(\gamma)\geq 0, $ for any q-positive $ \gamma \in \Lambda^{2n-2p,2n-2q}_IM $, we say that $ T $ is \emph{q-positive}.

\subsection{Useful formulae} 
In this subsection we collect some well-known formulae. First of all, we state the following Lemma.
\begin{lem}[\cite{Alesker-Verbitsky (2006)}]\label{lemAV}
Let $ (M,\mathsf{H}) $ be a hypercomplex manifold. There is a bijective correspondence between the set of $J$-anti-invariant forms in $ \Lambda^{1,1}_IM $ and $ \Lambda^{2,0}_IM $ given by $ \gamma \mapsto \Phi(\gamma) $, where
\[
\Phi(\gamma)(X,Y)=\frac{\sqrt{-1}\gamma(JX,Y)-\gamma(KX,Y) }{2}\,, \quad X,Y\in TM\,.
\]
Furthermore, $ \Phi(\gamma) $ is q-real (resp. q-positive) if and only if $ \gamma $ is real (resp. positive).
\end{lem}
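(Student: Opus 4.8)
The plan is to establish the bijective correspondence described in Lemma~\ref{lemAV} by exhibiting the explicit map $\Phi$ and verifying it is well-defined, linear, and invertible, while simultaneously tracking the positivity properties. First I would check that the prescribed formula $\Phi(\gamma)(X,Y)=\tfrac{1}{2}\bigl(\sqrt{-1}\,\gamma(JX,Y)-\gamma(KX,Y)\bigr)$ actually lands in $\Lambda^{2,0}_IM$ when $\gamma$ is a real $(1,1)$-form with respect to $I$. Concretely, I would confirm skew-symmetry $\Phi(\gamma)(X,Y)=-\Phi(\gamma)(Y,X)$ using the $I$-invariance of $\gamma$ together with the anticommutation relations $JK=-KJ=I$, and then verify the $(2,0)$-type condition by computing $\Phi(\gamma)(IX,Y)=\sqrt{-1}\,\Phi(\gamma)(X,Y)$. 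This last computation is the key structural check: it reduces to the identities $\gamma(JIX,Y)=\gamma(KX,Y)$ and $\gamma(KIX,Y)=-\gamma(JX,Y)$, both of which follow from $JI=K$, $KI=-J$ together with the $I$-invariance $\gamma(I\,\cdot,I\,\cdot)=\gamma(\,\cdot,\cdot)$.

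Next I would produce the inverse map and thereby the bijectivity. The natural candidate is to recover $\gamma$ from $\Phi(\gamma)$ by a formula of the shape $\gamma(X,Y)=\Re\bigl(\text{combination of }\Phi(\gamma)(JX,Y),\,\Phi(\gamma)(KX,Y)\bigr)$, dictated by inverting the $2\times 2$ linear system implicit in the definition. Rather than guessing, I would argue dimensionally and linear-algebraically: both $\Lambda^{1,1}_{I,\R}M$ (real $(1,1)$-forms) and the space of $(2,0)$-forms with respect to $I$ have the same fiber dimension over each point, so it suffices to show $\Phi$ is injective fiberwise. Injectivity follows because $\Phi(\gamma)=0$ forces $\gamma(JX,Y)=\gamma(KX,Y)=0$ for all $X,Y$; since $J$ and $K$ are pointwise invertible, this gives $\gamma=0$. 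Combining injectivity with equality of dimensions yields that $\Phi$ is a fiberwise isomorphism, hence a bijection at the level of global sections.

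For the positivity statement I would relate the two positivity notions directly through the formula. Recall that $\gamma$ being a positive $(1,1)$-form means $\gamma(X,IX)>0$ for nonzero $X$, whereas $\Phi(\gamma)$ being q-positive (in the sense of \eqref{eq:q-pos}) means $\Phi(\gamma)(X,JX)>0$. The plan is to compute $\Phi(\gamma)(X,JX)$ explicitly using the defining formula, substituting $Y=JX$ and simplifying with $J^2=-\mathrm{Id}$, $KJ=-JK=I$ (up to sign conventions), to obtain an expression proportional to $\gamma(X,IX)$. I would also need to verify that $\Phi(\gamma)$ is q-real, i.e. $J\overline{\Phi(\gamma)}=\Phi(\gamma)$, which is equivalent to $\gamma$ being real; this is a direct consequence of the action convention $(L\eta)(X_1,\dots)=\eta(LX_1,\dots)$ stated in the preliminaries. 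The main obstacle I anticipate is purely bookkeeping: getting every sign right in the anticommutation relations among $I,J,K$ and in the conjugation/q-reality conditions, since a single sign error propagates through both the type check and the positivity comparison. Because such verifications are routine and entirely analogous to the standard complex-geometry correspondence between Hermitian $(1,1)$-forms and their matrix representatives, I would expect to cite \cite{Alesker-Verbitsky (2006)} and present only the essential computation rather than every entry.
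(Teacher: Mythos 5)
The paper never proves Lemma \ref{lemAV}: it is imported from \cite{Alesker-Verbitsky (2006)} with no argument given, so your attempt has to stand on its own. It does not, because the two steps that carry your whole argument are both false as stated.

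First, skew-symmetry of $\Phi(\gamma)$ does \emph{not} follow from $I$-invariance of $\gamma$ together with the quaternion relations. Those facts only give that $\Phi(\gamma)$ is of type $(2,0)$ as a \emph{bilinear} form, i.e. $\Phi(\gamma)(IX,Y)=\Phi(\gamma)(X,IY)=\sqrt{-1}\,\Phi(\gamma)(X,Y)$; and even here your signs are off, since $K=IJ$ and anticommutativity force $JI=-K$, $KI=J$ rather than $JI=K$, $KI=-J$ (with your signs the first-slot computation would produce the wrong type). Nothing forces this bilinear form to be alternating. Concretely, take $M=\H$ with $I,J,K$ acting by left multiplication by $i,j,k$ and $z^1=x^0+\sqrt{-1}\,x^1$, $z^2=x^2+\sqrt{-1}\,x^3$: the real $(1,1)$-form $\gamma=dx^0\wedge dx^1=\tfrac{\sqrt{-1}}{2}\,dz^1\wedge d\bar z^1$ satisfies $\Phi(\gamma)(\partial_{x^0},\partial_{x^3})=0$ but $\Phi(\gamma)(\partial_{x^3},\partial_{x^0})=-\tfrac{\sqrt{-1}}{2}$, so $\Phi(\gamma)$ is not even a $2$-form. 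What is true is that skew-symmetry of $\Phi(\gamma)$ is equivalent to $J$-anti-invariance of $\gamma$, i.e. $\gamma(J\cdot,J\cdot)=-\gamma(\cdot,\cdot)$ (which, combined with $I$-invariance, also gives $K$-anti-invariance; these are the identities the antisymmetry check actually uses), while the $J$-invariant part of $\gamma$ is sent to the \emph{symmetric} part of $\Phi(\gamma)$.

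Second, the dimension count behind your bijectivity argument is wrong: $(1,1)$-forms have fiber dimension $(2n)^2=4n^2$ over $\C$, whereas $(2,0)$-forms have $\binom{2n}{2}=n(2n-1)$; in the real picture it is $4n^2$ real $(1,1)$-forms against $n(2n-1)$ q-real $(2,0)$-forms. So ``injective plus equidimensional'' cannot close the argument. Both defects point to the same repair, which is the correct reading of the lemma and of \cite{Alesker-Verbitsky (2006)}: the domain must be the $J$-anti-invariant $(1,1)$-forms. On that space both sides do have dimension $n(2n-1)$ (real $J$-anti-invariant $(1,1)$-forms are exactly the quaternionic Hermitian forms), your injectivity argument survives, surjectivity then follows by equidimensionality, and your positivity computation $\Phi(\gamma)(X,JX)=\tfrac12\,\gamma(JX,IJX)$ goes through verbatim. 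This restriction is also how the paper itself uses $\Phi$ everywhere else: it is only ever evaluated on $\omega_I$ or on $J$-anti-invariant parts $\tfrac12(\gamma-J\gamma)$, cf. \eqref{eq:traces}, Lemma \ref{Lem:ddJ} and the definition of $c_1^{\mathrm{qBC}}$. A careful run of your own plan would have surfaced this: the dimension mismatch is the signal that the statement, taken literally, needs the $J$-anti-invariance hypothesis.
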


By rewriting \eqref{eq:omeghe} in terms of $ \omega_I $, we see that  $ \Omega =\Phi(\omega_I) $. Moreover, in view of Lemma \ref{lemAV},  if $ \gamma\in \Lambda^{1,1}_IM$, then, $ \Phi (\frac{\gamma-J\gamma}{2} ) $ is q-real if and only if $ \gamma-J\gamma$ is real, i.e. $ \gamma-\bar \gamma $ is $ J $-invariant. Furthermore, if $  \psi\in \Lambda_I^{1,0}M $, then 
\begin{equation}\label{Lem:ddJ}
\Phi \left(\frac{\sqrt{-1}\bar \partial \psi-\sqrt{-1}J\bar \partial\psi}{2} \right)=\frac{1}{2}\partial_J \psi\,,
\end{equation}
which can be proved by adapting \cite[Lemma 2.1]{Bedulli-Gentili-Vezzoni2} (cf. also \cite[Remark 4.1]{Sroka22}). In particular, $\partial_J \psi$ is q-real if and only if $\bar \partial \psi+ \partial \bar \psi$ is $J$-invariant.

  Under the correspondence $\Phi$, we can observe the following fact. 
If $ \gamma \in \Lambda^{1,1}_IM$, then,  by straightforward calculations,  we have 
\begin{equation}\label{eq:traces}
\begin{aligned}
\mathrm{tr}_{\Omega}\left(\Phi\left(\frac{\gamma-J\gamma}{2}\right)\right)
=\frac{1}{2}\mathrm{tr}_{\omega_I}\gamma\,,
\end{aligned}
\end{equation}
where 
$$
{\rm tr}_{\Omega}\xi:=n\frac{\xi \wedge \Omega^{n-1}}{\Omega^n}\,, \quad \xi\in \Lambda^{2,0}_IM\,.
$$
As a consequence, we recover from \eqref{Lem:ddJ}  and \eqref{eq:traces} that the operator
\[
\Delta_\Omega \colon C^\infty(M,\R) \to C^\infty(M,\R)\,, \qquad \Delta_{\Omega} \phi:=\mathrm{tr}_\Omega(\partial \partial_J \phi)=\mathrm{tr}_{\omega_I}(\sqrt{-1}\partial \bar \partial \phi)
\]
is the Chern Laplacian.

Finally, it will also be useful to recall the definition of the Hodge star operator:
\[
\psi \wedge * \zeta=g(\psi,\zeta)\frac{\Omega^n\wedge \bar \Omega^n}{(n!)^2}\,, \quad \psi,\zeta \in \Lambda^{p,q}_IM\,, 
\]
where $ g $ is the hyperhermitian metric associated to $ \Omega $. From this, we can deduce the identities:
\begin{align*}
*\Omega&=\frac{\Omega^{n-1}\wedge \bar \Omega^{n}}{n!(n-1)!}\,, \quad *\psi=-J\bar \psi \wedge\frac{\Omega^{n-1}\wedge \bar \Omega^{n}}{n!(n-1)!} \,, \quad \psi \in \Lambda^{1,0}_IM\,.
\end{align*}
Moreover, one can easily prove that:
\begin{equation}\label{eqn:starHodge}
*\zeta=-J\bar \zeta\wedge\frac{\Omega^{n-2}\wedge \bar \Omega^n}{n!(n-2)!}+{\rm tr}_{\Omega}(J \bar \zeta) \frac{\Omega^{n-1}\wedge \bar \Omega^n}{n!(n-1)!}\,, \quad \zeta \in \Lambda_I^{2,0}M\,.
\end{equation}
We conclude this subsection proving an easy lemma which will be used later in the paper.
\begin{lem}\label{Lem:Lemma2.3}
Let $(M^n, \mathsf H, g)$ be a hyperhermitian  manifold. Then, for every $\psi,\zeta\in \Lambda^{2,0}_IM$, we have:
\begin{equation}\label{eqn:hodgerel}
\psi\wedge\zeta \wedge \frac{\Omega^{n-2}}{(n-2)!}=\left({\rm tr}_{\Omega}(\psi){\rm tr}_{\Omega}(\zeta)-g(\psi, J \bar \zeta)\right) \frac{\Omega^{n}}{n!}\,.
\end{equation}
\end{lem}
\begin{proof}
Fixed $\psi,\zeta\in \Lambda^{2,0}_IM$, we have that, using \eqref{eqn:starHodge}, 
\[
\begin{aligned}
g(\psi, J \bar \zeta)\frac{\Omega^n\wedge \bar \Omega^n}{(n!)^2}=&\, -\psi\wedge \zeta\wedge\frac{\Omega^{n-2}\wedge \bar \Omega^n}{n!(n-2)!}+{\rm tr}_{\Omega}(\zeta)\psi\wedge \frac{\Omega^{n-1}\wedge \bar \Omega^n}{n!(n-1)!}\\
=&\,  -\psi\wedge \zeta\wedge\frac{\Omega^{n-2}\wedge \bar \Omega^n}{n!(n-2)!}+ {\rm tr}_{\Omega}( \zeta){\rm tr}_{\Omega}( \psi)\frac{\Omega^n\wedge \bar \Omega^n}{(n!)^2}\,.
\end{aligned}
\] Using that wedging with $\frac{\bar \Omega^n}{n!}$ is an isomorphism, we conclude. 
\end{proof}

\subsection{Canonical forms in hyperhermitian geometry}\label{Sec:ab}
Let $ (M^n,\mathsf{H},g) $ be a hyperhermitian manifold. We define the forms $ \alpha_{\Omega},\beta_{\Omega} \in \Lambda^{1,0}_IM $ via the relations:
\[
    \partial \bar \Omega^n= \alpha_{\Omega} \wedge \bar \Omega^n\,, \quad \quad \partial \Omega^{n-1}=\beta_{\Omega} \wedge \Omega^{n-1}\,.
\]
To lighten the notation,  we shall write $ \alpha $ and $ \beta $ in place of $ \alpha_{\Omega} $ and $ \beta_{\Omega} $, whenever  no confusion can be made. Let us explain why such forms are well-defined. Regarding $ \alpha $, it is well-known that $ \alpha+\bar \alpha $ is the connection $ 1 $-form $ \eta_I $ of the Obata connection on the canonical bundle $ K(M,I):=\Lambda^{2n,0}_IM $, see, for instance, \cite{Verbitsky (2007)}.
Thanks to the last identity in \eqref{eq:q-pos}, one can see that $ \eta_I=\alpha + \bar \alpha $ does not depend on $ I $. For this reason we shall drop the reference to the complex structure and simply denote it $ \eta $.\\
On the other hand, the form $ \beta $ is well-defined by the fact that the map $L^{n-1}\colon \Lambda^{1,0}_IM\to \Lambda^{2n-1,0}_IM $ such that $L^{n-1} (\gamma ):=\Omega^{n-1}\wedge \gamma $ is an isomorphism.  The proof of this property is standard.

We also mention that $ \alpha $ is $ \partial $-closed and $ \partial_J \alpha $ is q-real, see \cite[Section 10.1]{Verbitsky (2002)}. In the same fashion, one sees that 
$ \mathrm{tr}_\Omega (\partial\beta)=0 $ and $ \mathrm{tr}_\Omega(\partial_J \beta)=\mathrm{tr}_\Omega(\partial J \bar \beta ) $.

The next lemma gives alternative expressions of $ \alpha $ and $ \beta $ in terms of the adjoint $ \Lambda $ of the Lefschetz operator with respect to $\Omega$. 

\begin{lem}\label{Lem:alphabeta}
Let $ (M^n,\mathsf{H},g) $ be a hyperhermitian manifold. Then
\[
\alpha=\bar \Lambda(\partial \bar \Omega)\,, \quad \beta=\Lambda(\partial \Omega)\,.
\]
\end{lem}
\begin{proof}
Let $ Z\in T^{1,0}_IM $. Then, we have
\[
\begin{aligned}
     \iota_Z\bar \Lambda(\partial \bar \Omega)&
     =*\left(\iota_Z\partial\bar \Omega\wedge \frac{\Omega^n\wedge \bar \Omega^{n-1}}{n!(n-1)!}\right)  
     = *\left( \iota_Z\Omega^n\wedge \frac{\partial\bar \Omega\wedge \bar \Omega^{n-1}}{n!(n-1)!}\right)
     = 
     \iota_Z\alpha\,.
     \end{aligned}
    \]
Then, $\alpha=\bar \Lambda( \partial\bar \Omega)$. In a similar manner, we have
\[
\begin{aligned}
      \iota_Z\Lambda(\partial \Omega)&=*\left(\iota_Z\partial \Omega\wedge \frac{\Omega^{n-1}\wedge \bar \Omega^n}{n!(n-1)!}\right)= *\left(\iota_Z\Omega\wedge \beta\wedge\frac{\Omega^{n-1}\wedge \bar \Omega^n}{n!(n-1)!}\right)
   =\iota_Z\beta\,,
\end{aligned}
\]
concluding the proof.
\end{proof}

The interest of the forms $ \alpha $ and $ \beta $, as it turns out, is that they are strictly related to other well-known quantities.

\begin{prop}\label{Prop:alpha}
Let $(M^n,\mathsf{H},g)$ be a hyperhermitian manifold. Then,  the following hold:
\begin{enumerate}[label=(\alph*),ref=\alph*]
\item \label{alphaLee} the Lee forms of $ \omega_L $, for $ L\in \mathsf{H} $, all coincide and they are equal to
\[
\theta_g:=\alpha+ \bar \alpha+ \beta+\bar \beta\,;
\]
as a consequence, $\beta+ \bar \beta$ is independent of $L \in\mathsf H$; 
\item \label{alphaChern} the first Chern-Ricci form of $\omega_L$, for any $L\in \mathsf H $, is given by:
$$
\Ric_{\omega_L}^{\mathrm{Ch}}=dL(\alpha+\bar \alpha)=dL\eta\,;
$$
\item\label{Lem:ricdelJalpha} $$\Phi\left(\frac{\Ric^{\mathrm{Ch}}_{\omega_I}-J\Ric^{\mathrm{Ch}}_{\omega_I}}{2}\right)=\partial_J\alpha\,;$$
\item \label{alphaBismut}  the  Bismut-Ricci form of $\omega_L$, for any $L\in \mathsf H $, is given by:
\[
\Ric_{\omega_L}^{\mathrm{Bis}}=-dL(\beta+\bar \beta)\,.
\]
\end{enumerate}
\end{prop}
\begin{proof}
To prove \eqref{alphaLee},  let $ \theta_L:=-Ld^*\omega_L $ be the Lee form of $ \omega_L $. We first observe that
\[
d^*\Omega=-*\partial \left(\frac{\Omega^{n-1}\wedge\bar  \Omega^n}{n!(n-1)!}\right)=-*\left((\alpha+ \beta) \wedge \frac{ \Omega^{n-1}\wedge \bar \Omega^n}{n!(n-1)!}\right)=J(\bar \alpha+\bar \beta)\,.
\]
Hence,
\[
J\theta_J=d^*\omega_J=d^*(\Omega+\bar \Omega)=J(\bar \alpha+\bar \beta+\alpha+\beta)
\]
and similarly
\[
K\theta_K=d^*\omega_K=-\sqrt{-1}d^*(\Omega-\bar \Omega)=K(\bar \alpha+\bar \beta+\alpha+\beta)\,,
\]
therefore $ \theta_J=\theta_K. $ The same argument replacing $ J $ and $ K $ with $ K $ and $ I $ respectively, shows that $ \theta_K=\theta_I $. Thus we have $ \theta_I=\theta_J=\theta_K=:\theta_g $ and actually, for any $ L=aI+bJ+cK\in \mathsf{H} $
\[
\theta_{L}=-Ld^*\omega_{L}=-Ld^*(a\omega_I+b\omega_J+c\omega_K)=L(aI^{-1}+bJ^{-1}+cK^{-1})\theta_g=\theta_g\,,
\]
since $ aI^{-1}+bJ^{-1}+cK^{-1}=L^{-1} $. The claim about $\beta+ \bar \beta $ follows from what we just proved and the fact that $\eta$ is independent of $L\in\mathsf H$.

It is enough to show \eqref{alphaChern} and \eqref{alphaBismut} for $L=I$. Fix local $I$-holomorphic coordinates $\{z^1, \ldots, z^{2n}\}$. We set $P:=\mathrm{pf}(\Omega_{ij})$, the Pfaffian of $(\Omega_{ij})$,  and $G:=\det(g_{r\bar s})$.
Since $ \Omega^n=n!P dz^1\wedge \dots \wedge dz^{2n}$, it follows that $\bar \partial P=P\bar \alpha. $ Then, since $|P|^2=G$,  we get
\begin{equation}\label{eqnbarpartialG}
\bar \partial G=\overline{P}\bar \partial P+P\overline{\partial P}=G\bar \alpha+P\overline{\partial P}\,.
\end{equation}
Hence, using \eqref{eqnbarpartialG}, we have
\[
\begin{aligned}
\Ric_{\omega_I}^{\mathrm{Ch}}&=-\sqrt{-1}\partial\left( G^{-1}\bar \partial G \right)=
-\sqrt{-1}\left(\partial \bar \alpha+\partial(G^{-1}P\overline{\partial P})\right)
\\
&= \sqrt{-1}\left(-\partial \bar \alpha + G^{-1}P\alpha \wedge \overline{\partial P}+G^{-1}P\bar \partial( \overline{ P}\alpha )\right)
=\sqrt{-1}\left(\bar \partial \alpha-\partial \bar \alpha\right)=dI\eta.
\end{aligned}
\]
 To prove (\ref{Lem:ricdelJalpha}), we use \eqref{alphaChern} and obtain
 \[
\frac{\Ric^{\mathrm{Ch}}_{\omega_I}-J\Ric^{\mathrm{Ch}}_{\omega_I}}{2}=\frac{\sqrt{-1}}{2}\left( \bar \partial \alpha-\partial \bar \alpha-J\bar \partial \alpha+J\partial \bar \alpha \right)=\sqrt{-1}\left( \bar \partial \alpha-J\bar \partial \alpha \right)\,,
\]
where  $\bar\partial  \alpha- J \bar \partial \alpha= -\partial \bar \alpha+ J \partial \bar \alpha $, since $ \partial_J \alpha $ is q-real. Therefore, using \eqref{Lem:ddJ},  we conclude. 

Now \eqref{alphaBismut} follows from \eqref{alphaLee} and \eqref{alphaChern} together with the well-known identity
\[
\Ric_{\omega_I}^{\mathrm{Ch}}-\Ric^{\mathrm{Bis}}_{\omega_I}=dI\theta_g\,. \qedhere
\]
\end{proof}

We remark here that, when the hyperhermitian metric is HKT,  the fact that the Lee forms coincide was observed in \cite{Boyer, Ivanov-Papadopoulos}, and their relation with $\eta$ was shown in \cite[Lemma 2.2]{Bedulli-Gentili-Vezzoni2}.

We will now focus on scalar curvatures of hyperhermitian metrics. From  Proposition \ref{Prop:alpha} (\ref{Lem:ricdelJalpha}) and   \eqref{eq:traces} and we infer that
\[
s^{\mathrm{Ch}}(\omega_I):=\mathrm{tr}_{\omega_I}\Ric_{\omega_I}^{\mathrm{Ch}}=2\mathrm{tr}_{\Omega}(\partial_J \alpha)\,,
\]
where  $ s^{\mathrm{Ch}}(\omega_I)$ is the \emph{Chern scalar curvature} of $ \omega_I $. Moreover,  using Proposition \ref{Prop:alpha} (\ref{alphaBismut}) and \eqref{eq:traces}, we have that the \emph{Bismut scalar curvature} $ s^{\mathrm{Bis}}(\omega_I)$ satisfies
\begin{equation}\label{eqn:bismutscalarbeta}
s^{\mathrm{Bis}}(\omega_I):={\rm tr}_{\omega_I}{\rm Ric}^{{\rm Bis}}_{\omega_I}=-2\mathrm{tr}_{\Omega}(\partial_J \beta)\,.
\end{equation}

\begin{prop}\label{Prop:scalar}
Let $ (M^n,\mathsf{H},g) $ be a hyperhermitian manifold. Then, for all  $ L\in \mathsf{H} $,  $$
s^{\mathrm{Ch}}(\omega_I)=s^{\mathrm{Ch}}(\omega_L)\,,\quad s^{\mathrm{Bis}}(\omega_I)=s^{\mathrm{Bis}}(\omega_L)\,. 
$$
\end{prop}
\begin{proof}
We prove the proposition for Chern scalar curvatures, for Bismut scalar curvatures the argument is analogous. Observe that it is enough to prove $ s^{\mathrm{Ch}}(\omega_P)=s^{\mathrm{Ch}}(\omega_L) $, for all $ L\in \mathsf{H} $ such that $ PL=-LP $. 
 Thus,  let $ L=aJ+bK\in \mathsf{H} $ be the generic complex structure  anticommuting with $ I $. Note that
\begin{equation}\label{eq:omegaL}
\omega_L=a\omega_J+b\omega_K=a(\Omega+\bar \Omega)-\sqrt{-1}b(\Omega-\bar \Omega)=(a-\sqrt{-1}b)\Omega+(a+\sqrt{-1}b)\bar \Omega
\end{equation}
and set $ w:=a-\sqrt{-1}b $. Keeping in mind that $ |w|=1 $, we compute
\[
\begin{split}
\omega_L^{2n-1}&=\binom{2n-1}{n}\bar w\Omega^{n-1}\wedge \bar \Omega^n+\binom{2n-1}{n}w\Omega^n\wedge \bar \Omega^{n-1}\,,\quad
\omega_L^{2n}=\binom{2n}{n}\Omega^n\wedge \bar \Omega^{n}\,.
\end{split}
\]
Thus, for any $ \xi \in \Lambda^2M$, we have
\[
\mathrm{tr}_{\omega_L}\xi=n\bar w\frac{ \xi^{2,0}\wedge \Omega^{n-1}\wedge \bar \Omega^n}{\Omega^n\wedge \bar \Omega^n}+nw\frac{\xi^{0,2}\wedge \Omega^n\wedge \bar \Omega^{n-1}}{\Omega^n\wedge \bar \Omega^n}=\bar w\,\mathrm{tr}_{\Omega}(\xi^{2,0})+w\,\mathrm{tr}_{\bar \Omega}(\xi^{0,2})\,.
\]
Now, to conclude the proof we only need to compute the $ (2,0) $ and $ (0,2) $ parts of $ \Ric^{\mathrm{Ch}}_{\omega_L} $. We observe,  by expanding the Chern-Ricci forms of $ \omega_J $ and $ \omega_K $, that:
\begin{align}
\label{eq:Ric_J}
\Ric^{\mathrm{Ch}}_{\omega_J}&=J\Ric^{\mathrm{Ch}}_{\omega_J}=JdJ\eta=\partial_J\alpha+\partial_J \bar \alpha+\bar \partial_J\alpha+\overline{\partial_J \alpha}\,,\\
\label{eq:Ric_K}
\Ric^{\mathrm{Ch}}_{\omega_K}&=IJ^{-1}dJI(\alpha+\bar \alpha)=\sqrt{-1}\left(-\partial_J\alpha-\partial_J \bar \alpha+\bar \partial_J\alpha+\overline{\partial_J \alpha}\right)\,.
\end{align}
Then, since $ \Ric^{\mathrm{Ch}}_{\omega_L}=dL\eta=a\Ric^{\mathrm{Ch}}_{\omega_J}+b\Ric^{\mathrm{Ch}}_{\omega_K} $, we get
\begin{equation}\label{eq:RicL}
(\Ric^{\mathrm{Ch}}_{\omega_L})^{2,0}=w\partial_J \alpha\,, \quad (\Ric^{\mathrm{Ch}}_{\omega_L})^{0,2}=\overline{w\partial_J \alpha}\,.
\end{equation}
Finally, we conclude
\[
s^{\mathrm{Ch}}(\omega_L)=\mathrm{tr}_{\omega_L} \Ric^{\mathrm{Ch}}_{\omega_{L}}=\bar w\,\mathrm{tr}_{\Omega}\left((\Ric^{\mathrm{Ch}}_{\omega_L})^{2,0}\right)+w\,\mathrm{tr}_{\bar \Omega}\left((\Ric^{\mathrm{Ch}}_{\omega_L})^{0,2}\right)=s^{\mathrm{Ch}}(\omega_I)\,. \qedhere
\]
\end{proof}

\noindent In view of  Proposition \ref{Prop:scalar} we will omit the reference to the $ (1,1) $-form with respect to which we are considering scalar curvatures and simply denote them by $ s^{\mathrm{Ch}} $ and $ s^{\mathrm{Bis}} $. If needed, we shall write  $ s^{\mathrm{Ch}}(\Omega) $ and $ s^{\mathrm{Bis}}(\Omega) $,  specifying the hyperhermitian metric $ \Omega $ with respect to which scalar curvatures are considered.

We shall now study equivalent conditions, in terms of the hyperhermitian data, for a hyperhermitian metric to satisfy some well-known cohomological properties studied on complex manifolds. We quickly recall that a Hermitian metric $g$ on a complex manifold $(M, J)$ is said to be {\em Gauduchon} if $d^*\theta_g=0$. Furthermore, $g$ is called {\em balanced} if $\theta_g=0$. 

\noindent The following lemma identifies conditions on the form $\Omega$ in order for $\omega_I$ to be Gauduchon. 

\begin{lem}\label{lem:gaudu}
Let $ (M^n,\mathsf{H},g) $ be a  hyperhermitian manifold. Then,   the following are equivalent
\begin{enumerate}
\item\label{1gauduchon} $ \omega_I $ is Gauduchon\,;
\item\label{tuttegauduchon} $ \omega_L $ is Gauduchon for any $ L\in \mathsf{H} $\,;
\item \label{scalarigau}$ s^{\mathrm{Ch}}-s^{\mathrm{Bis}}-2|\alpha+\beta|^2=0 $\,;
\item \label{deldelJgau} $ \partial \partial_J (\Omega^{n-1}\wedge \bar \Omega^n)=0 $.
\end{enumerate}
\end{lem}
\begin{proof}
By definition of being Gauduchon, the equivalence of \eqref{1gauduchon} and \eqref{tuttegauduchon} follows from Proposition \ref{Prop:alpha} \eqref{alphaLee}.  Next,  we compute
\[
\begin{aligned}
d^*(\alpha+\beta)=*d\left(J(\bar \alpha+\bar \beta)\wedge \frac{\Omega^{n-1}\wedge  \bar \Omega^n}{n!(n-1)!}\right)= \mathrm{tr}_\Omega(\partial_J( \alpha+ \beta))-|\alpha+\beta|^2
=\frac{s^{\mathrm{Ch}}}{2}-\frac{s^{\mathrm{Bis}}}{2}-|\alpha+\beta|^2\,,
\end{aligned}
\]
where we used q-realness of $ \partial_J \alpha $ and the fact that $\mathrm{tr}_\Omega(\partial J \bar \beta)=\mathrm{tr}_\Omega(\partial_J \beta)$. Therefore
\[
d^*\theta_g=2\Re ( d^*(\alpha+\beta))=s^{\mathrm{Ch}}-s^{\mathrm{Bis}}-2|\alpha+\beta|^2\,,
\]
giving the equivalence between \eqref{1gauduchon} and \eqref{scalarigau}. Finally, the equivalence with \eqref{deldelJgau} follows from $d^*(\alpha+ \beta)=-*\partial \partial_J(\frac{\Omega^{n-1}\wedge \bar \Omega^n}{n!(n-1)!})$.
\end{proof}

\noindent The following is an adaptation of Lemma \ref{lem:gaudu} to the balanced case. The proof is analogous.

\begin{lem}\label{Lem:3.13}
Let $ (M^n,\mathsf{H},g) $ be  a hyperhermitian manifold. The following are equivalent
\begin{enumerate}
\item $ \omega_I $ is balanced\,;
\item $ \omega_L $ is balanced for any $ L\in \mathsf{H} $\,;
\item $ \alpha+\beta=0 $\,;
\item $ \partial(\Omega^{n-1}\wedge \bar \Omega^n)=0 $.
\end{enumerate}
\end{lem}

The following lemma will be useful later.

\begin{lem}\label{Lem:scalaribalanced}
Let $ (M^n,\mathsf{H},g) $ be a compact hyperhermitian manifold with non-negative Bismut scalar curvature. Then $g$ cannot have negative Chern scalar curvature, furthermore $g$ is balanced if and only if it is Chern scalar flat. 
\end{lem}
\begin{proof}
Let $ \Omega_{\mathrm{G}}=\mathrm{e}^f\Omega $ be the $(2,0)$-form associated to the Gauduchon metric in the conformal class of $ \Omega $. Then
\[
\begin{split}
0&=s^{\mathrm{Ch}}(\Omega_{\mathrm{G}})-s^{\mathrm{Bis}}(\Omega_{\mathrm{G}})-2|\alpha_{\Omega_{\mathrm{G}}}+\beta_{\Omega_{\mathrm{G}}}|^2_{\Omega_{\mathrm{G}}}\\
&=\mathrm{e}^{-f}\left( s^{\mathrm{Ch}}(\Omega)-s^{\mathrm{Bis}}(\Omega)-2|\alpha_\Omega+\beta_\Omega+(2n-1)\partial f|^2_{\Omega} -2(2n-1)\Delta_{\Omega}f \right)
\end{split}
\]
Since $ \Omega $ has non-negative Bismut scalar curvature we deduce
\begin{equation}\label{eqn:Lemma3.9}
2(2n-1)\Delta_{\Omega}f\leq s^{\mathrm{Ch}}(\Omega)-2|\alpha_\Omega+\beta_\Omega+(2n-1)\partial f|^2_{\Omega}\,.
\end{equation}
If we had $ s^{\mathrm{Ch}}(\Omega)<0 $ the maximum principle would imply that $ f $ is constant yielding the inequality $ s^{\mathrm{Ch}}(\Omega)\geq 2|\alpha_\Omega+\beta_\Omega|^2_{\Omega} $ which contradicts the negativity of $ s^{\mathrm{Ch}}(\Omega) $. Furthermore, it is clear from \eqref{eqn:Lemma3.9} that if  $ s^{\mathrm{Ch}}(\Omega)=0 $  then  $ \alpha_\Omega+\beta_\Omega=0 $ and, hence, $\theta_g=0$. Conversely, if $\Omega$ is balanced, $s^{{\rm Ch}}(\Omega)=s^{{\rm Bis}}(\Omega)\ge 0 $. On the other hand, integrating by parts and using Lemma \ref{Lem:3.13}, we have
\[
\int_Ms^{{\rm Ch}}(\Omega)\frac{\Omega^n\wedge \bar \Omega^n}{(n!)^2}=2\int_M \partial_J\alpha_{\Omega}\wedge\frac{\Omega^{n-1}\wedge \bar \Omega^n}{n!(n-1)!}=0\,,
\]
but thus is possible only if $s^{{\rm Ch}}(\Omega)=0$, giving the claim.
\end{proof}

As mentioned in the introduction,  many other definitions of special hyperhermitian metrics, peculiar of the hypercomplex setting, were given. We shall recall their definition.

\begin{defn}\label{def:specialmetrics}
Let $ (M^n,\mathsf H ) $ be a hypercomplex manifold. A hyperhermitian metric $ g $ is:
\begin{enumerate}
  \item \emph{hyperk\"ahler with torsion} (HKT) if $ \partial \Omega=0 $ \cite{Howe-Papadopoulos (2000)}; 
  \item  \emph{quaternionic balanced} if $ \partial \Omega^{n-1}=0 $  \cite{Lejmi-Weber2};
  \item  \emph{quaternionic strongly Gauduchon} if $ \partial \Omega^{n-1} $ is $ \partial_J $-exact \cite{Lejmi-Weber}; 
  \item  \emph{quaternionic Gauduchon} if $\partial\partial_J\Omega^{n-1}=0$ \cite{GLV}. 
  \end{enumerate}
\end{defn}

\noindent All the definitions above are mimicking the analogous definitions of, respectively, K\"ahler, balanced, strongly Gauduchon, Gauduchon  metrics arising in complex Geometry, see \cite{cPaul,Gau2,Mich, Pop09}.\\
The properties in Definition \ref{def:specialmetrics} are listed in order of strength.
However, we shall collect  examples in   Section \ref{Sec:Ex} showing that the inclusions among the classes of manifolds admitting these metrics are strict. For instance, as far as we know, we provide the first example of compact hypercomplex manifold admitting quaternionic strongly Gauduchon metrics but no quaternionic balanced metrics.
\section{Proof of Theorem \ref{Thm:main1.2}} \label{Sec:THM1.2}

 First of all,  we give the definition of \emph{first quaternionic Bott-Chern class} of $(M, I, J)$. Let $ E $ be a $ I $-holomorphic line bundle over a hypercomplex manifold $ (M,\mathsf{H}) $. The curvature $ R_h $ of any Hermitian metric $h$ on $ E $ is a locally $\partial \bar \partial$-exact,  real $ (1,1) $-form on $ M $,  hence  $ \Phi(\frac{R_h-JR_h}{2}) $ is a q-real, $(2,0)$-form which is locally $\partial\partial_J$-exact. We then are led to give the following definition.
\begin{defn}
Let $(E, h)$ be a $I$-Hermitian line bundle with curvature $R$ over a hypercomplex manifold $(M, \mathsf H)$. The \emph{first quaternionic Bott-Chern class of $ E $ with respect to $ J $} is 
\[
c_1^{\mathrm{qBC}}(E,J):=\left[\Phi\left(\frac{R_h-JR_h}{2} \right)\right]_{\mathrm{qBC}}\in H^{2,0}_{{\rm qBC}}(M).
\]
Moreover, we call $ c_1^{\mathrm{qBC}}(M,I,J):=c_1^{\mathrm{qBC}}(K(M,I)^{-1},J) $ the \emph{first quaternionic Bott-Chern class of} $ M $ \emph{with respect to} $ (I,J) $.
\end{defn}
The class $c_1^{\mathrm{qBC}}(E,J)$ does not depend on the choice of $ h $. Indeed, any other Hermitian metric $h'$ on $ E $ has curvature $ R_{h'}=R_h+\sqrt{-1} \partial \bar \partial f $, for some $ f\in C^\infty(M,\R) $, hence,  thanks to \eqref{Lem:ddJ},  $ \Phi(\frac{R_{h'}-JR_{h'}}{2})=\Phi(\frac{R_h-JR_h}{2})+\frac{1}{2}\partial \partial_J f $. 
Observe that for any pair of $ I $-holomorphic line bundles $ E $ and $ F $ we have
\[
c_1^{\mathrm{qBC}}(E\otimes F,J)=c_1^{\mathrm{qBC}}(E,J)+c_1^{\mathrm{qBC}}(F,J)\,.
\]

As it is well-known, any hyperhermitian metric $ \omega_I $ on $ (M,\mathsf{H}) $, induces a Hermitian metric on $ K(M,I)^{-1} $ with curvature $ R=\Ric^{\mathrm{Ch}}_{\omega_I} $, hence, as a consequence of  Proposition \ref{Prop:alpha} (\ref{Lem:ricdelJalpha}), we have
$$
c_1^{\mathrm{qBC}}(M,I,J)=[\partial_J\alpha]_{{\rm qBC}}\,. 
$$

We shall give an equivalent definition of $c_1^{\mathrm{qBC}}(M,I,J)$. Let $ \Theta\in \Lambda^{2n,0}_IM $ be any q-positive section of the canonical bundle, then there exists $ \alpha_\Theta \in \Lambda^{1,0}_IM $ such that
\[
\partial \bar \Theta=\alpha_\Theta \wedge \bar \Theta\,.
\]
Similarly as before, $ [\partial_J \alpha_\Theta ]_{\mathrm{qBC}}$ does not depend on the choice of $ \Theta $. Indeed, if $ \Theta' $ is another q-positive $ (2n,0) $-form, there exists a function $ f\in C^\infty(M,\R) $ such that $ \Theta'=\mathrm{e}^f\Theta $.
Therefore,  $ \partial_J\alpha_{\Theta'}=\partial_J \alpha_\Theta -\partial \partial_J f $, which shows that $ [\partial_J \alpha_{\Theta'}]_{\mathrm{qBC}}=[\partial_J \alpha_\Theta]_{\mathrm{qBC}} $.

In particular,  we observe that $ c_1^{\mathrm{qBC}}(M,I,J)=0 $ if and only if there exists a metric in each hyperhermitian conformal class such that $ \partial_J \alpha=0 $, equivalently by  Proposition \ref{Prop:alpha} (\ref{Lem:ricdelJalpha}) the Chern-Ricci form of $ \omega_I $ is $ J $-invariant. Clearly, the definition of the first quaternionic Bott-Chern class depends on the choice of a basis $ (I,J) $ for the hypercomplex structure. However,  we can prove the following.
 
\begin{prop}\label{garullabarulla}
Let $ (M,\mathsf{H}) $ be a hypercomplex manifold. If $ c_1^{\mathrm{qBC}}(M,I,J) $ is positive, zero or negative, then so is $ c_1^{\mathrm{qBC}}(M,P,L) $,  for any other anticommuting complex structures $ L,P\in \mathsf H $.
\end{prop}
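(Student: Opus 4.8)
The plan is to show that one fixed hyperhermitian metric simultaneously realizes the vanishing of the first quaternionic Bott--Chern class for \emph{every} pair of anticommuting complex structures. By the characterization recorded just before the statement, the hypothesis $c_1^{\mathrm{qBC}}(M,I,J)=0$ furnishes a hyperhermitian metric $\Omega$ with $\partial_J\alpha_\Omega=0$; by Lemma \ref{Lem:BOH} and the bijectivity of the map $\Phi$ in Lemma \ref{lemAV}, this is equivalent to $\Ric^{\mathrm{Ch}}_{\omega_I}$ being $J$-invariant. Since $\Ric^{\mathrm{Ch}}_{\omega_I}$ is of type $(1,1)$ with respect to $I$, i.e. $I$-invariant, and invariance under $I$ and $J$ forces invariance under $K=IJ$, the form $\Ric^{\mathrm{Ch}}_{\omega_I}$ is in fact invariant under all of $\mathsf{H}$. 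The goal is then to upgrade this to the statement that $\Ric^{\mathrm{Ch}}_{\omega_P}$ is $\mathsf{H}$-invariant for \emph{every} $P\in\mathsf{H}$, for this very same metric $\Omega$.

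The engine of the argument is the observation that $\partial_J\alpha=0$ propagates to the other two Chern--Ricci forms. Indeed, the expansions \eqref{eq:Ric_J} and \eqref{eq:Ric_K} exhibit the $(2,0)_I\oplus(0,2)_I$ parts of $\Ric^{\mathrm{Ch}}_{\omega_J}$ and $\Ric^{\mathrm{Ch}}_{\omega_K}$ as (scalar multiples of) $\partial_J\alpha\pm\overline{\partial_J\alpha}$, the remaining summands $\partial_J\bar\alpha+\bar\partial_J\alpha$ being of type $(1,1)_I$. Hence, once $\partial_J\alpha=0$, both $\Ric^{\mathrm{Ch}}_{\omega_J}$ and $\Ric^{\mathrm{Ch}}_{\omega_K}$ are purely $(1,1)_I$, that is $I$-invariant; being already $J$- respectively $K$-invariant as Chern--Ricci forms of $\omega_J,\omega_K$, they are $\mathsf{H}$-invariant. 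Thus all three of $\Ric^{\mathrm{Ch}}_{\omega_I},\Ric^{\mathrm{Ch}}_{\omega_J},\Ric^{\mathrm{Ch}}_{\omega_K}$ are $\mathsf{H}$-invariant.

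To conclude, I would invoke the linearity $L\mapsto L\eta$ together with the formula $\Ric^{\mathrm{Ch}}_{\omega_L}=dL\eta$ from the remark following Proposition \ref{Prop:alpha}: writing $P=aI+bJ+cK$ one gets $\Ric^{\mathrm{Ch}}_{\omega_P}=a\,d(I\eta)+b\,d(J\eta)+c\,d(K\eta)=a\Ric^{\mathrm{Ch}}_{\omega_I}+b\Ric^{\mathrm{Ch}}_{\omega_J}+c\Ric^{\mathrm{Ch}}_{\omega_K}$, so $\Ric^{\mathrm{Ch}}_{\omega_P}$ is an $\R$-linear combination of three $\mathsf{H}$-invariant forms and is therefore itself $\mathsf{H}$-invariant. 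Consequently, for any complex structure $L$ anticommuting with $P$ the $L$-anti-invariant part of $\Ric^{\mathrm{Ch}}_{\omega_P}$ vanishes, so the representative $\Phi\!\left(\tfrac{\Ric^{\mathrm{Ch}}_{\omega_P}-L\Ric^{\mathrm{Ch}}_{\omega_P}}{2}\right)$ of $c_1^{\mathrm{qBC}}(M,P,L)$ (built from $\omega_P$ with respect to the pair $(P,L)$) is identically zero, whence $c_1^{\mathrm{qBC}}(M,P,L)=0$. I expect the main obstacle to be precisely the middle step: recognizing that a \emph{single} metric works for all pairs at once, because the scalar condition $\partial_J\alpha=0$ secretly kills the off-diagonal $(2,0)_I$ components of all three Chern--Ricci forms via \eqref{eq:Ric_J}--\eqref{eq:Ric_K}; once this is seen, the linearity in $P$ makes the passage to an arbitrary pair immediate and no conformal rescaling of $\Omega$ is needed.
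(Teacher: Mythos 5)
Your proof is correct, and it reaches the conclusion by a route that is organized differently from the paper's, although both share the same computational core: the choice of a metric $\Omega$ with $\partial_J\alpha_\Omega=0$ and the expansions \eqref{eq:Ric_J}--\eqref{eq:Ric_K}. The paper first reduces, via the sphere-covering symmetry argument of Proposition \ref{Prop:scalar}, to the pairs $P=I$, $L=aJ+bK$, and then reads off from \eqref{eq:RicL} that the relevant representative is $w\partial_J\alpha$ up to conjugates, which vanishes for the chosen metric; the extension to a completely arbitrary pair is delegated to iterating the symmetry argument, whose details are left implicit. You avoid the reduction altogether: you upgrade $\partial_J\alpha=0$ to $\mathsf{H}$-invariance of all three forms $\Ric^{\mathrm{Ch}}_{\omega_I},\Ric^{\mathrm{Ch}}_{\omega_J},\Ric^{\mathrm{Ch}}_{\omega_K}$ and then use the linearity $\Ric^{\mathrm{Ch}}_{\omega_P}=dP\eta=a\Ric^{\mathrm{Ch}}_{\omega_I}+b\Ric^{\mathrm{Ch}}_{\omega_J}+c\Ric^{\mathrm{Ch}}_{\omega_K}$ to treat every pair $(P,L)$ in one stroke, which also makes explicit that a single metric realizes the vanishing for all pairs simultaneously. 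The one step you assert but do not justify is that invariance of a $2$-form $\gamma$ under $I$, $J$ and $K$ separately implies invariance under every $L=aI+bJ+cK\in\mathsf{H}$; this is genuinely needed, since in your last step $L$ is an arbitrary element of $\mathsf{H}$ anticommuting with an arbitrary $P$, not one of the three basis structures. Fortunately it is elementary: expanding $\gamma(LX,LY)$ bilinearly, the diagonal terms give $(a^2+b^2+c^2)\gamma(X,Y)=\gamma(X,Y)$, while each cross term cancels by invariance under the remaining structure, e.g.
\[
\gamma(IX,JY)=\gamma(KIX,KJY)=\gamma(JX,-IY)=-\gamma(JX,IY)\,,
\]
so $\gamma(IX,JY)+\gamma(JX,IY)=0$, and similarly for the other two cross terms. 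With this one-line check inserted, your argument is complete and is, if anything, more self-contained than the paper's on the passage from the special pair $(I,J)$ to a general pair $(P,L)$.
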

\begin{proof}
First of all, as done at the beginning of the proof of Proposition \ref{Prop:scalar},  it is enough to show the claim for $ P=I $ and $ L=aJ+bK\in \mathsf{H} $. Then, for any hyperhermitian metric $ \Omega $, keeping in mind the identities \eqref{eq:RicL}, we have
\begin{equation}\label{eqn:Prop4.2}
\frac{\Ric^{\mathrm{Ch}}_{\omega_L}-I\Ric^{\mathrm{Ch}}_{\omega_L}}{2}=(\Ric^{\mathrm{Ch}}_{\omega_L})^{2,0}+(\Ric^{\mathrm{Ch}}_{\omega_L})^{0,2}=(a-\sqrt{-1}b)\partial_J \alpha+ (a+\sqrt{-1}b)\overline{\partial_J \alpha}\,.
\end{equation}
If $ c_1^{\mathrm{qBC}}(M,I,J)=0 $,  we can choose $ \Omega $ so that $ \partial_J \alpha=0 $, which allows us to conclude that $ c_1^{\mathrm{qBC}}(M,I,L)=0 $. If instead $c_1^{\mathrm{qBC}}(M,I,J)>0$ we can choose $\Omega$ so that $\partial_J \alpha$ is q-positive, then, again from  \eqref{eqn:Prop4.2}, it follows that $\Ric^{\mathrm{Ch}}_{\omega_L}-I\Ric^{\mathrm{Ch}}_{\omega_L}$ is a positive $(1,1)$-form with respect to $L$, which implies that $c_1^{\mathrm{qBC}}(M,I,L)>0$, as claimed. The negative case is analogous.
\end{proof}
Thanks to Proposition \ref{garullabarulla}, when this occurs, we will unambiguously write $ c_1^{\mathrm{qBC}}(M,\mathsf{H})>0 $, $ c_1^{\mathrm{qBC}}(M,\mathsf{H})=0 $, or $ c_1^{\mathrm{qBC}}(M,\mathsf{H})<0 $.

\begin{rmk}\label{rmk:implications}
Let $ (M,\mathsf{H}) $ be a hypercomplex manifold.  We have that $c_1^{{\rm BC}}(M, I)=0$ forces $c_1^{{\rm qBC}}(M, \mathsf H)=0$. Indeed, if $ c_1^{\mathrm{BC}}(M,I)=0 $, for any hyperhermitian metric $ \Omega $ on $ (M,\mathsf{H}) $ we have that $ \Ric^{\mathrm{Ch}}_{\omega_I}$ is $\partial \bar \partial$-exact. Hence, by a conformal rescaling, we can find a hyperhermitian metric which is Chern-Ricci flat, whence $ c_1^{\mathrm{qBC}}(M,\mathsf{H})=0 $. We refer to Subsection \ref{Ex:Andrada-Tolcachier} for an example that shows that the converse does not hold, in general.
\end{rmk}
\smallskip
We observe that the vanishing of the first quaternionic Bott-Chern class restricts the possibilities for the Kodaira dimension, denoted as $ \kappa(M,L) $,  for any  $L\in \mathsf H$.
\begin{prop}\label{Prop:kod}
Let $ (M,\mathsf{H}) $ be a compact hypercomplex manifold. If $ c_1^{\mathrm{qBC}}(M,\mathsf{H})=0 $, then $ \kappa(M,L)\leq 0 $ for all $ L\in \mathsf{H} $. Moreover, $ \kappa(M,L)=0 $ if and only if $ K(M,L) $ is holomorphically torsion.
\end{prop}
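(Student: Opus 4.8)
The plan is to reduce both assertions to a single fact: \emph{under the hypothesis, every nonzero holomorphic section of $K(M,L)^{\otimes m}$, for any $L\in\mathsf H$ and any $m\ge 1$, is nowhere vanishing.} Granting this, any two nonzero holomorphic sections of $K(M,L)^{\otimes m}$ differ by a global holomorphic function on the compact connected manifold $M$, hence by a constant, so $\dim_\C H^0(M,K(M,L)^{\otimes m})\le 1$ for every $m$, which forces $\kappa(M,L)\le 0$. For the second assertion, if $\kappa(M,L)=0$ then some $K(M,L)^{\otimes m}$ carries a nonzero holomorphic section $\sigma$; being nowhere vanishing, $\sigma$ trivialises $K(M,L)^{\otimes m}$ and thus $K(M,L)$ is holomorphically torsion. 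Conversely, if $K(M,L)^{\otimes m}$ is holomorphically trivial then $H^0(M,K(M,L)^{\otimes m})\cong\C$, so the plurigenera do not all vanish and $\kappa(M,L)\ge 0$; combined with $\kappa(M,L)\le 0$ this yields $\kappa(M,L)=0$.

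To establish the key fact I would first use $c_1^{\mathrm{qBC}}(M,\mathsf H)=0$ to select one hyperhermitian metric $\Omega$ with $\partial_J\alpha_\Omega=0$, which exists by the characterisation recalled just before the statement. Because $\mathrm{tr}_\Omega(\partial_J\alpha)=\frac12 s^{\mathrm{Ch}}$, this metric has vanishing Chern scalar curvature $s^{\mathrm{Ch}}=0$, and by Proposition \ref{Prop:scalar} the same holds for every $L\in\mathsf H$. Fix a nonzero holomorphic section $\sigma$ of $K(M,I)^{\otimes m}$ and endow $K(M,I)$ with the metric induced by $\Omega$, whose curvature is $-\Ric^{\mathrm{Ch}}_{\omega_I}$. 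The Lelong--Poincar\'e formula gives, as currents,
\[
\sqrt{-1}\,\partial\bar\partial\log|\sigma|^2=2\pi\,[Z_\sigma]+m\,\Ric^{\mathrm{Ch}}_{\omega_I},
\]
with $[Z_\sigma]\ge 0$ the integration current on the zero divisor of $\sigma$. Taking the trace with respect to $\omega_I$ and using $\Delta_\Omega=\mathrm{tr}_{\omega_I}(\sqrt{-1}\partial\bar\partial\,\cdot\,)$ together with $s^{\mathrm{Ch}}=0$, we obtain
\[
\Delta_\Omega\log|\sigma|^2=2\pi\,\mathrm{tr}_{\omega_I}[Z_\sigma]+m\,s^{\mathrm{Ch}}=2\pi\,\mathrm{tr}_{\omega_I}[Z_\sigma]\ge 0.
\]

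Thus $\log|\sigma|^2$ is $\Delta_\Omega$-subharmonic on the compact manifold $M$, so the maximum principle forces it to be constant; in particular $\sigma$ has no zeros, which is exactly the key fact for $L=I$, and the identical argument with $I$ replaced by any $L\in\mathsf H$ (legitimate since $s^{\mathrm{Ch}}=0$ for all $L$) settles the general case. The step I expect to demand the most care is precisely this appeal to the maximum principle: the function $\log|\sigma|^2$ equals $-\infty$ along $Z_\sigma$, and $\Delta_\Omega$ is a second order elliptic operator with no zeroth order term which is not self-adjoint, differing from its formal adjoint by a first order term built from the Lee form $\theta_\Omega$. One should therefore invoke the strong maximum principle for such operators applied to the upper semicontinuous, not identically $-\infty$, distributional subsolution $\log|\sigma|^2$: its supremum is attained off $Z_\sigma$ and subharmonicity propagates the maximal value, giving constancy and ruling out the zero locus. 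With this established, the reductions of the first paragraph complete the proof.
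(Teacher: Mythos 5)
Your proof is correct, but its analytic core differs from the paper's. Both arguments share the same skeleton: use $c_1^{\mathrm{qBC}}(M,\mathsf H)=0$ to fix a metric on the (pluri)canonical bundle whose curvature has vanishing trace, run a maximum principle to show that every nonzero section of $K(M,L)^{\otimes k}$ has empty zero locus, and conclude $\dim H^0(M,K(M,L)^{\otimes k})\le 1$, from which both assertions follow exactly as in your first paragraph. The paper, however, never leaves the smooth category: it picks an $I$-Hermitian metric $h$ on $K(M,I)$ with $\Phi\bigl(\tfrac{R_h-JR_h}{2}\bigr)=0$ and applies the Bochner formula to the smooth function $|\psi|^2$, obtaining $\Delta_{\omega_I}|\psi|^2=|\nabla\psi|^2\ge 0$; the strong maximum principle then makes $|\psi|^2$ constant, hence $\psi$ \emph{parallel} for the Chern connection, and parallelism delivers in one stroke both the nonvanishing and the bound $\dim H^0\le 1$ (two parallel sections agreeing at one point agree everywhere). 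You instead apply Lelong--Poincar\'e to $\log|\sigma|^2$, which forces you to handle an upper semicontinuous function equal to $-\infty$ along $Z_\sigma$; this is exactly where your route pays a technical price the paper avoids. Your sketch of that step is sound, and can be made cleaner than you suggest: off $Z_\sigma$ the current $[Z_\sigma]$ drops out and, since $s^{\mathrm{Ch}}=0$, the function $\log|\sigma|^2$ is smooth and $\Delta_\Omega$-\emph{harmonic} there, so the classical strong maximum principle (no distributional subsolutions needed) together with the open-closed argument on the set $\{\log|\sigma|^2=\max\}$ rules out $Z_\sigma\neq\emptyset$. What your version buys is transparency about the hypotheses: it isolates that only the scalar identity $s^{\mathrm{Ch}}=0$ --- valid simultaneously for all $L\in\mathsf H$ by Proposition \ref{Prop:scalar} --- is actually used, and it replaces the parallel-transport dimension count by the elementary observation that the ratio of two sections with nowhere-vanishing denominator is a constant. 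What the paper's version buys is a strictly smooth argument and the slightly stronger conclusion that all pluricanonical sections are Chern-parallel.
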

\begin{proof}
Without loss of generality we prove the statement for $ L=I $. Let $ \Omega $ be any hyperhermitian metric on $ (M,\mathsf{H}) $. Since $ c_1^{\mathrm{qBC}}(K(M,I),J)=-c_1^{\mathrm{qBC}}(M,I,J)=0 $, there exists a Hermitian metric $ h $ on $ K(M,I) $ such that $ \Phi(\frac{R_h-JR_h}{2})=0 $. Now, for any $ k\geq 1 $ and any section $ \psi \in H^0(M,K(M,I)^{\otimes k}) $, a straightforward computation gives
\[
\Delta_{\omega_I} |\psi|^2=|\nabla \psi|^2-k|\psi|^2\mathrm{tr}_{\omega_I}(R_{h})=|\nabla \psi|^2-2k|\psi|^2\mathrm{tr}_{\Omega}\left(\Phi\left(\frac{R_h-JR_h}{2}\right)\right)=|\nabla \psi|^2\geq 0\,,
\]
where $|\cdot |^2$ and $\nabla$ are the pointwise squared norm and the Chern connection with respect to the metric $h^k$ induced on the power $ K(M,I)^{\otimes k} $, respectively. The strong maximum principle now implies that $|\psi|^2$ is constant, whence $\nabla \psi\equiv 0$. Consequently, for any $ k\geq 0 $ we have $\dim H^0(M,K_M^{\otimes k})\leq 1$ from which it follows $\kappa(M,I)\leq 0$.

Finally, we have $ \kappa(M,I)=0 $ if and only if there is at least a power $ k\geq 1 $ such that $ \dim H^0(M,K(M,I)^{\otimes k})=1 $, i.e. there exists a global holomorphic section of $ K(M,I)^{\otimes k} $, which is then parallel and nowhere vanishing.
\end{proof}
\noindent Be aware that, in general, for two different complex structures in $ \mathsf{H} $ the corresponding Kodaira dimensions need not be equal (see Subsection \ref{Ex:Andrada-Tolcachier}).
\smallskip

We now turn our attention to the ${\rm SL}(n, \H)$ condition. Let $ (M^n,\mathsf{H}) $ be a hypercomplex manifold. In \cite[Claim 1.2]{Verbitsky (2007)}, Verbitsky observed that being $\mathrm{SL}(n,\H)$ implies that $K(M, L)$ is holomorphically trivial, for any $L\in \mathsf H $, in particular $c_1^{\mathrm{qBC}}(M,\mathsf{H})=0$. This can also be deduced from the following result, which can be seen as a generalisation of \cite[Theorem 2.2]{Ivanov-Petkov}.

\begin{prop}\label{Prop:glob_slnH}
Let $ (M^n,\mathsf{H}) $ be a hypercomplex manifold. Then the following are equivalent:
\begin{enumerate}
	\item \label{globSLnH1} the holonomy group of the Obata connection is contained in $ \mathrm{SL}(n,\H) $;
	\item \label{globSLnH2} the $ (1,0) $-form $ \alpha_\Omega $ is $ \partial $-exact,  for any hyperhermitian metric $ \Omega $ on $ (M,\mathsf{H}) $;
	\item \label{globSLnH3} in any hyperhermitian conformal class there exists a unique (up to scaling) metric $ \Omega $ on $ (M,\mathsf{H}) $ such that $ \alpha_\Omega=0 $.
\end{enumerate}
\end{prop}
\begin{proof}
We know that  $ {\rm Hol}(\nabla^{\mathrm{Ob}})\subseteq  \mathrm{SL}(n,\H) $ if and only if there exists a global q-positive $ \nabla^{\mathrm{Ob}} $-parallel section $\Theta$. Observe that the canonical bundle of a hypercomplex manifold $ (M,\mathsf{H}) $ is always topologically trivialised by $ \Omega^n $. We can therefore identify $\Theta$ with a function $T\in C^\infty(M,\R)$, then $\nabla^{\mathrm{Ob}}T=dT+\eta T$, which vanishes if and only if $\eta=-d(\log T)$, i.e. the connection 1-form is exact. Thus \eqref{globSLnH1} is equivalent to \eqref{globSLnH2}. Obviously \eqref{globSLnH3} implies \eqref{globSLnH2}; to see the converse, suppose $ \alpha_\Omega=\partial f $ for some $ f\in C^\infty(M,\R) $, then the conformally rescaled metric $ \Omega_f=\mathrm{e}^{-\frac{f}{n}}\Omega $ satisfies $ \alpha_{\Omega_f}=0 $. The uniqueness of such a metric is clear.
\end{proof}

We now study some interesting consequences of the existence of quaternionic Gauduchon and quaternionic balanced metrics.

\begin{prop}\label{Prop:Prop5.2}
Let $(M^n,\mathsf{H},g)$ be a compact hyperhermitian manifold admitting a compatible quaternionic Gauduchon metric. Then, the following are equivalent:
\begin{enumerate}
\item $\alpha=0$;   
\item  $ \Ric_{\omega_L}^{\mathrm{Ch}}=0\,,$ for all $L\in \mathsf{H}$; 
\item $ \partial_J \alpha=0\,.$
\end{enumerate}
If in addition $g$ is quaternionic balanced, then it cannot have negative Chern scalar curvature and the conditions above are also equivalent to
\begin{enumerate}
\item[(4)] $s^{\mathrm{Ch}}=0$; 
\item[(5)] $g$  is balanced.
\end{enumerate}
\end{prop}
\begin{proof}
From Proposition \ref{Prop:alpha} \eqref{alphaChern}, we know that $ \alpha=0 $ always implies Chern-Ricci flatness which in turn implies $ \partial_J \alpha=0 $ by  Proposition \ref{Prop:alpha} (\ref{Lem:ricdelJalpha}). Therefore, we only need to show that $ \partial_J \alpha=0 $ implies $ \alpha=0 $. Let $ \tilde \Omega $ be a quaternionic Gauduchon metric on $ (M,\mathsf{H}) $. In general, we have that 
\begin{equation}\label{eqn:dedejomegan}
\partial \partial_J \bar \Omega^n
=\left(-\partial_J \alpha_\Omega +\alpha_\Omega \wedge J^{-1}\bar \alpha_\Omega \right)\wedge \bar \Omega^n\,.
\end{equation}
Hence, assuming $ \partial_J\alpha_\Omega=0 $,  wedging with $\frac{ \tilde \Omega^{n-1}}{n!(n-1)!} $ and integrating over $M$, we have 
\[
0=\int_M \frac{\partial \partial_J \tilde \Omega^{n-1}\wedge \bar \Omega^n}{n!(n-1)!}=\int_M \frac{\tilde \Omega^{n-1}\wedge \partial \partial_J \bar \Omega^n}{n!(n-1)!}=\int_M|\alpha_\Omega|^2_{\tilde \Omega}\, \frac{\tilde \Omega^n \wedge \bar \Omega^n}{(n!)^2}\,,
\]
showing that  $\alpha_\Omega=0$.
Now, if $g$ is quaternionic balanced the rest of the Proposition follows from Lemma \ref{Lem:scalaribalanced}.
\end{proof}

Now we are ready to prove Theorem \ref{Thm:main1.2} and  Corollary \ref{Cor:main1.3}.

\begin{proof}[Proof of Theorem \ref{Thm:main1.2} and Corollary \ref{Cor:main1.3}]
The necessity follows from \cite[Proposition 16]{GLV}, \cite[Claim 1.2]{Verbitsky (2007)} and Remark \ref{rmk:implications}.
Conversely, the assumption $ c_1^{\mathrm{qBC}}(M,\mathsf{H})=0 $ implies that any hyperhermitian conformal class contains a hyperhermitian metric such that $ \partial_J \alpha=0 $, therefore $ \alpha=0 $ thanks to Proposition \ref{Prop:Prop5.2} and then $ (M,\mathsf{H}) $ is $ \mathrm{SL}(n,\H) $, by Proposition \ref{Prop:glob_slnH}. This concludes the proof of Theorem \ref{Thm:main1.2}.

It is clear that the $ \partial \partial_J $-Lemma implies $ c_1^{\mathrm{qBC}}(M,\mathsf{H})=0 $ and if $ (M,\mathsf{H}) $ admits an HKT metric then the $ \partial \partial_J $-Lemma is implied by the $ \mathrm{SL}(n,\H) $ condition, see \cite[Theorem 6]{GLV}. Therefore Corollary \ref{Cor:main1.3} follows. 
\end{proof}



Note that the HKT assumption in Corollary \ref{Cor:main1.3} cannot be weakened, see Example \ref{ex:Lejmi-Weber}. In the last part of this section we collect some consequences of Theorem \ref{Thm:main1.2}.


\begin{cor}\label{Lem:deldelJlemma}
Let $ (M^n,\mathsf{H}) $ be a compact hypercomplex manifold  satisfying the $ \partial \partial_J $-Lemma. Then, the following are equivalent:
\begin{enumerate}
    \item \label{qsG1} $ (M,\mathsf{H}) $ is an $\mathrm{SL}(n,\H)$-manifold;
    \item \label{qsG2} There exists a quaternionic Gauduchon metric on $ (M,\mathsf{H}) $;
    \item \label{qsG3} There exists a quaternionic strongly Gauduchon metric on $ (M,\mathsf{H}) $.
    \end{enumerate}
\end{cor}
\begin{proof}
The equivalence between \eqref{qsG1} and \eqref{qsG2} follows from Theorem \ref{Thm:main1.2}. We only need to prove that \eqref{qsG2} implies \eqref{qsG3}. Let $ \Omega $ be a quaternionic Gauduchon metric. Hence,   $ \partial_J \Omega^{n-1} $ is $ \partial $-closed and $ \partial_J $-exact. Thanks to the $\partial\partial_J$-Lemma, it is therefore $ \partial \partial_J $-exact so that $ \Omega $ is quaternionic strongly Gauduchon.
\end{proof}

\noindent By conjunction of Lemma \ref{Lem:deldelJlemma}, Theorem \ref{Thm:main1.2} and \cite[Theorem 10.1]{Lejmi-Weber} we infer

\begin{cor}
Let $ (M^2,\mathsf{H}) $ be a compact hypercomplex manifold. Then,  any two of the following conditions imply the third:
\begin{enumerate}
\item\label{item:Cor5.61} $ (M,\mathsf{H}) $ is $ \mathrm{SL}(2,\H) $;
\item \label{item:Cor5.62} There exists an HKT metric on $ (M,\mathsf{H}) $;
\item\label{item:Cor5.63} The $ \partial \partial_J $-Lemma holds on $ (M,\mathsf{H}) $.
\end{enumerate}
\end{cor}

\noindent We should mention that, assuming \eqref{item:Cor5.61}, the combination of \cite[Theorem 25]{GLV} and \cite[Theorem 1]{GLV} already gives that \eqref{item:Cor5.62} and \eqref{item:Cor5.63} are equivalent.

Assuming the existence of a quaternionic balanced metric, we can strengthen Proposition \ref{Prop:kod}.
\begin{cor}\label{Cor:kod}
Let $ (M^n,\mathsf{H}) $ be a compact hypercomplex manifold admitting a compatible quaternionic balanced metric. Then, $ c_1^{\mathrm{qBC}}(M,\mathsf{H})=0 $ if and only if $ \kappa(M,L)=0 $ for all $ L\in \mathsf{H} $,  and $ c_1^{\mathrm{qBC}}(M,\mathsf{H})\neq 0 $ if and only if $ \kappa(M,L)=-\infty $,  for all $ L\in \mathsf{H} $.
\end{cor}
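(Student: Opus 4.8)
The plan is to route everything through the $\mathrm{SL}(n,\H)$ condition. Since a quaternionic balanced metric is in particular quaternionic Gauduchon, Corollary \ref{Cor:main} already yields the equivalence $c_1^{\mathrm{qBC}}(M,\mathsf{H})=0$ if and only if $(M,\mathsf{H})$ is $\mathrm{SL}(n,\H)$. Moreover, by the previous Proposition we know $\kappa(M,L)\le 0$ for every $L\in\mathsf{H}$, so each $\kappa(M,L)$ equals either $0$ or $-\infty$. It therefore suffices to establish the chain of equivalences
\[
(M,\mathsf{H}) \text{ is } \mathrm{SL}(n,\H) \iff \kappa(M,L)=0 \text{ for all } L\in\mathsf{H} \iff \kappa(M,L)=0 \text{ for some } L\in\mathsf{H};
\]
both assertions of the Corollary will then follow, the second by negating the first and exploiting $\kappa(M,L)\in\{0,-\infty\}$.

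For the implication $\mathrm{SL}(n,\H)\Rightarrow \kappa(M,L)=0$ for all $L$, I would invoke Verbitsky's observation recalled in Remark \ref{rmk:implications}: the $\mathrm{SL}(n,\H)$ condition forces $K(M,L)$ to be holomorphically trivial for every $L\in\mathsf{H}$. Triviality of all tensor powers $K(M,L)^{\otimes k}$ over the compact connected $M$ produces a nowhere vanishing holomorphic section, hence $\kappa(M,L)\ge 0$; combined with $\kappa(M,L)\le 0$ this pins down $\kappa(M,L)=0$.

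The heart of the argument is the converse: assuming $\kappa(M,L)=0$ for a single $L$ — say $L=I$ — I must recover the $\mathrm{SL}(n,\H)$ condition. Since $\kappa(M,I)\neq-\infty$, the contrapositive of the ``moreover'' part of the previous Proposition says that the Gauduchon metric $\Omega_{\mathrm{G}}=\mathrm{e}^{f}\Omega$ in the conformal class of the given quaternionic balanced metric $\Omega$ is actually balanced. By Lemma \ref{Lem:3.13} this means $\alpha_{\Omega_{\mathrm{G}}}+\beta_{\Omega_{\mathrm{G}}}=0$; substituting the conformal formulae \eqref{arcc} together with $\beta_\Omega=0$ (the quaternionic balancedness of $\Omega$) gives $\alpha_\Omega+(2n-1)\partial f=0$, so that $\alpha_\Omega=-(2n-1)\partial f$ is $\partial$-exact with a \emph{real} potential. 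By Proposition \ref{Prop:glob_slnH} this is precisely the $\mathrm{SL}(n,\H)$ condition. I expect this balanced-representative step to be the main obstacle, since it is where the quaternionic balanced hypothesis (through $\beta_\Omega=0$) and the previous Proposition must be combined to turn a curvature/Kodaira statement into exactness of $\alpha_\Omega$.

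Finally I would assemble the chain: the first implication gives $\mathrm{SL}(n,\H)\Rightarrow \kappa(M,L)=0$ for all $L$, which trivially implies it for some $L$, which by the converse returns $\mathrm{SL}(n,\H)$. This proves the first statement. For the second, negating ``$\kappa(M,L)=0$ for all $L$'' and using $\kappa(M,L)\in\{0,-\infty\}$ shows $c_1^{\mathrm{qBC}}(M,\mathsf{H})\neq 0$ is equivalent to ``$\kappa(M,L)=-\infty$ for some $L$'', and the very same chain of equivalences upgrades ``for some'' to ``for all'', completing the proof.
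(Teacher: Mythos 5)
Your proof is correct and follows essentially the same route as the paper's: both reduce to Corollary \ref{Cor:main} (so that $c_1^{\mathrm{qBC}}(M,\mathsf{H})=0$ is equivalent to the $\mathrm{SL}(n,\H)$ condition), both use the preceding Proposition for $\kappa(M,L)\le 0$ and for the dichotomy governed by whether the Gauduchon representative is balanced, and your key step --- balancedness of $\Omega_{\mathrm{G}}$ forces $\alpha_\Omega=-(2n-1)\partial f$ via \eqref{arcc} and Lemma \ref{Lem:3.13}, hence $\mathrm{SL}(n,\H)$ by Proposition \ref{Prop:glob_slnH} --- is exactly the paper's argument run in the contrapositive direction. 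The only cosmetic difference is your explicit chain of equivalences through ``$\kappa(M,L)=0$ for some $L$'', which the paper obtains for free from the mutual exclusivity of the two conclusions.
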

\begin{proof}
In these assumptions, we know from Theorem \ref{Thm:main1.2} that $ c_1^{\mathrm{qBC}}(M,\mathsf{H})=0 $ is equivalent to $ K(M,L) $ being holomorphically trivial for all $ L\in \mathsf{H} $ and, by Proposition \ref{Prop:kod}, this implies $ \kappa(M,L)=0 $,  for all $ L\in \mathsf{H} $.
On the other hand, if $ c_1^{\mathrm{qBC}}(M,\mathsf{H})\neq 0 $ the Gauduchon metric $ \Omega_{\mathrm{G}} $ in the conformal class of any quaternionic balanced metric cannot be balanced, otherwise the manifold would be $ \mathrm{SL}(n,\H) $. Indeed,
the form $ \beta_{\Omega_{\mathrm{G}}} $ would be $ \partial $-exact, but then, if $ \Omega_{\mathrm{G}} $ were balanced, $ \alpha_{\Omega_{\mathrm{G}}} $ would also be $ \partial $-exact, implying the $ \mathrm{SL}(n,\H) $ condition. Exploiting Bismut-Ricci flatness of quaternionic balanced metrics and \cite[Proposition 3.1]{Alexandrov-Ivanov}, we conclude.
\end{proof}

\begin{rmk}
It remains open to determine if a hypercomplex manifold $ (M,\mathsf{H}) $ with holomorphically trivial canonical bundle with respect to any $ L\in \mathsf{H} $ is necessarily $ \mathrm{SL}(n,\H) $, see \cite[Remark 6.4]{AT}.
\end{rmk}

\section{Existence results for quaternionic Gauduchon and quaternionic balanced metrics} \label{Sec:THM1.1}
Firstly, we rewrite the quaternionic Gauduchon condition in terms of the $ (1,0) $-form $ \beta $.

\begin{lem}\label{Lem:lemma5.1}
Let  $ (M^n,\mathsf{H},g) $ be a hyperhermitian manifold. Then, $ g $ is quaternionic Gauduchon if and only if  $ s^{\mathrm{Bis}}+2|\beta|^2=0 $.
\end{lem}
\begin{proof}
Expanding $\partial \partial_J\Omega^{n-1}$ and using \eqref{eqn:bismutscalarbeta}, we obtain the claim.
\end{proof}

\noindent Thanks to Proposition \ref{Prop:scalar} and Proposition \ref{Prop:alpha} \eqref{alphaLee}, we see that the quaternionic Gauduchon and the quaternionic balanced conditions do not depend on the pair of anticommuting complex structures in $ \mathsf{H} $. This is no longer true for quaternionic strongly Gauduchon metrics, see Subsection \ref{Subsec:qsG}.

As a first difference with the compact complex case, where Gauduchon metrics exist in any conformal class (see \cite{cPaul}), we emphasise that there are examples of compact hypercomplex manifolds which do not admit any quaternionic Gauduchon metric, see Subsection \ref{Ex:Andrada-Tolcachier}. However, as observed in the previous section,   a compact $ \mathrm{SL}(n,\H) $-manifold always admits quaternionic Gauduchon metrics. Similarly, we can prove:

\begin{lem}\label{Lem:lemma6.1}
Let $ (M^n,\mathsf{H}) $ be an $ \mathrm{SL}(n,\H) $-manifold. Then,  there exists a quaternionic balanced metric if and only if there exists a balanced hyperhermitian metric and the two metrics are conformal.
\end{lem}
\begin{proof}
By Lemma \ref{Lem:3.13}, a hyperhermitian metric $g $ is balanced if and only if $ \alpha+\beta=0 $ and so, by the $ \mathrm{SL}(n,\H) $ condition, $ \beta $ is $ \partial $-exact, thus, with a conformal change, we can find a quaternionic balanced metric. Conversely, if $\beta=0$, then $ \theta$ is $d$-exact, using the $\mathrm{SL}(n,\H)$-condition. Hence, we can find a hyperhermitian balanced metric performing a conformal change.
\end{proof}


We emphasise that quaternionic Gauduchon metrics exist on hypercomplex manifolds that are not $ \mathrm{SL}(n,\H) $, indeed there even exist HKT non-$ \mathrm{SL}(n,\H) $ manifolds such as the ones constructed by Joyce, see Subsection \ref{Subsec:Joyce},  or Swann \cite{Swann}. 

We will now prove Theorem \ref{Thm:main1.1}. 

\begin{proof}[Proof of Theorem \ref{Thm:main1.1}]
First of all, we focus on the quaternionic Gauduchon case. To begin with, we show that the conditions  \eqref{eqn:neccond1.2} and \eqref{eqn:neccond1.1} are necessary for the existence of a quaternionic Gauduchon metric in a given hyperhermitian conformal class. 
Let $\Omega=\mathrm{e}^{\frac{f}{n-1}}\Omega_{{\rm G}}$ be a quaternionic Gauduchon metric, where $\Omega_{\mathrm{G}}$ is the Gauduchon metric with unit volume in the conformal class of $\Omega$. Using \eqref{eqn:dedejomegan}, we have that
\[
  \begin{aligned}
  0=&\, \int_M\frac{\partial \partial_J\Omega^{n-1}\wedge \bar{\Omega}^n_{\mathrm{G}}}{n!(n-1)!}
  =\int_M\frac{\Omega^{n-1}\wedge \partial \partial_J\bar{\Omega}^n_{\mathrm{G}}}{n!(n-1)!}=-\int_M\left({\rm tr}_{\Omega}(\partial_J\alpha_{\Omega_{\mathrm{G}}})- \lvert \alpha_{\Omega_{\mathrm{G}}}\rvert^2_{\Omega}\right) \frac{\Omega^n\wedge \bar {\Omega}^n_{\mathrm{G}}}{(n!)^2}\,.
  \end{aligned}
\]
Then
\begin{equation}\label{nece1}
\int_M\mathrm{e}^{f}\left(s^{\mathrm{Ch}}(\Omega_{\mathrm{G}})- 2\lvert \alpha_{\Omega_{\mathrm{G}}}\rvert^2_{\Omega_{\mathrm{G}}}\right)\frac{\Omega^n_{\mathrm{G}}\wedge \bar{\Omega}^n_{\mathrm{G}}}{(n!)^2}=0\,.
\end{equation}
Identity \eqref{nece1} tells us that 
\begin{equation*}\label{veranece}
A=\left\{ f \in C^{\infty}(M, \mathbb R)\quad \middle| \quad \int_M\mathrm{e}^{f}\left(s^{\mathrm{Ch}}(\Omega_{\mathrm{G}})- 2\lvert \alpha_{\Omega_{\mathrm{G}}}\rvert^2_{\Omega_{\mathrm{G}}}\right)\frac{\Omega^n_{\mathrm{G}}\wedge \bar{\Omega}^n_{\mathrm{G}}}{(n!)^2}=0\right\}\ne \emptyset\,.
\end{equation*}
One can easily show that the above condition is equivalent to \eqref{eqn:neccond1.1}.
Furthermore, using Lemma \ref{Lem:lemma5.1}, $\Omega$ is quaternionic Gauduchon if and only if
   \begin{equation}\label{eqndarisolvere}
  \Delta_{\Omega_{\mathrm{G}}}f + |\beta_{\Omega_{\mathrm{G}}} + \partial f|^2+\frac{1}{2}s^{\mathrm{Bis}}(\Omega_{\mathrm{G}})=0\,.
  \end{equation}
Integrating \eqref{eqndarisolvere} with respect to the volume induced by $\Omega_{\mathrm{G}}$ gives \eqref{eqn:neccond1.2}.
Let us now show the uniqueness part. 
Assume $ \Omega $ and $ \Omega_f:=\mathrm{e}^{\frac{f}{n-1}} \Omega $ are both quaternionic Gauduchon metrics with unit volume, then
\[
\begin{aligned}
0=s^{\mathrm{Bis}}(\Omega_f)+2|\beta_{\Omega_f}|^2_{\Omega_f}=&\, \mathrm{e}^{-\frac{f}{n-1}}\left(s^{\mathrm{Bis}}(\Omega)+2 \Delta_{\Omega} f + 2|\beta_{\Omega} + \partial f|_{\Omega}^2 \right)\\
=&\, 2\mathrm{e}^{-\frac{f}{n-1}}\left(\Delta_{\Omega} f +2 \Re( g(\beta_{\Omega},\partial f))+|\partial f|_{\Omega}^2\right)\,.
\end{aligned}
\]
Since we have $ \Delta_{\Omega} f + 2 \Re( g(\beta_{\Omega},\partial f))=-|\partial f|_{\Omega}^2\leq 0 $, we can regard $ f $ as a supersolution of the linear equation
\[
\Delta_{\Omega} \phi + 2 \Re( g(\beta_{\Omega},\partial \varphi))= 0\,. 
\] Applying the minimum principle, we obtain that $f$ must be constant. On the other hand, the fact  that both $\Omega_f $ and $\Omega$ have unit volume  guarantees that $f=0$.

We now prove the existence part of the theorem. Let $ \Omega_{\mathrm{G}}$ be a Gauduchon metric of unit volume and fix $h\in A$. Up to addition of a constant, we may and do assume that  $h$ has zero mean with respect to $\Omega_{{\rm G}}$. We consider $\Omega_h=\mathrm{e}^{\frac{h}{n-1}}\Omega_{\mathrm{G}}$ and, as done above, we have that 
\begin{equation}\label{eqn:PDEh}
\mathrm{e}^{\frac{h}{n-1}}\left(\frac12s^{\mathrm{Bis}}(\Omega_h)+\lvert\beta_{\Omega_h}\rvert^2_{\Omega_h}\right)=\frac12s^{\mathrm{Bis}}(\Omega_{\mathrm{G}})+\Delta_{\Omega_{\mathrm{G}}}h+ \lvert\beta_{\Omega_{\mathrm{G}}}+ \partial h\rvert^2_{\Omega_{\mathrm{G}}}\,.
\end{equation}
To solve \eqref{eqndarisolvere}, we perform the classical method of continuity.
We consider the following family of equations for $ t\in [0,1] $
\begin{equation}\label{continuity}
\Delta_{\Omega_{\mathrm{G}}} f+\lvert\beta_{\Omega_{\mathrm{G}}}+\partial f\rvert^2_{\Omega_{\mathrm{G}}}+  \frac{1}{2}s^{\mathrm{Bis}}(\Omega_{\mathrm{G}}) = (1-t)\mathrm{e}^{\frac{h}{n-1}}\left(\frac{1}{2}s^{\mathrm{Bis}}(\Omega_h)+ \lvert\beta_{\Omega_h}\rvert^2_{\Omega_h}\right)\,.
\end{equation}
We will search for solutions in $A$ with zero mean with respect to $\Omega_{{\rm G}}$. 
For $ t=0 $, we easily observe that $ h $ is a solution.
Now, consider $ t\in [0,1] $ such that the corresponding equation admits a solution $ f\in C^{2,\alpha}(M,\R) $ and define the operator $ F_t\colon A^{2,\alpha}_0 \to \R $ such that
\[
F_t(\varphi):=\Delta_{\Omega_{\mathrm{G}}} \varphi+\lvert\beta_{\Omega_{\mathrm{G}}}+\partial \varphi\rvert^2_{\Omega_{\mathrm{G}}}+  \frac{1}{2}s^{\mathrm{Bis}}(\Omega_{\mathrm{G}}) - (1-t)\mathrm{e}^{\frac{h}{n-1}}\left(\frac{1}{2}s^{\mathrm{Bis}}(\Omega_h)+ \lvert\beta_{\Omega_h}\rvert^2_{\Omega_h}\right)\,,
\]
for any $t\in [0,1]$, where 
\[
A^{2, \alpha}_0=\left\{\varphi\in C^{2, \alpha }_0(M, \R) \quad \middle| \quad \int_M\mathrm{e}^{\phi}\left(s^{\mathrm{Ch}}(\Omega_{\mathrm{G}})- 2\lvert \alpha_{\Omega_{\mathrm{G}}}\rvert^2_{\Omega_{\mathrm{G}}}\right)\frac{\Omega^n_{\mathrm{G}}\wedge \bar{\Omega}^n_{\mathrm{G}}}{(n!)^2}=0\right\}\,.
\]
Here $ C^{2,\alpha}_0(M,\R) $ denotes the functions in $ C^{2,\alpha}(M,\R) $ with zero mean with respect to $ \Omega_{\mathrm{G}} $.
Then, the linearisation of $F_t$ at $ f $ is 
\[
d_fF_t(v)=\Delta_{\Omega_{\mathrm{G}}} v +2\Re( g(\beta_{\Omega_{\mathrm{G}}}+\partial f, \partial v))\,, \quad v\in T_fA_0^{2, \alpha}\,.
\]
In particular, $d_fF_t$ has zero kernel on $T_fA_0^{2, \alpha}$, by the maximum principle.
On the other hand,  its index is equal to that of the Laplacian, implying that it is invertible. Applying the Implicit Function Theorem, we can conclude openness of the set of $ t\in [0,1] $ for which \eqref{continuity} is solvable. 

Now, we need to show closedness. First of all, we prove an $ L^2 $-gradient estimate. Integrating \eqref{continuity} and using \eqref{eqn:PDEh} we get:
\[
\lVert\beta_{\Omega_{\mathrm{G}}}+ \partial f\rVert^2_{L^2(\Omega_{\mathrm{G}})}=-\frac{t}{2}\Gamma^{\mathrm{Bis}}(\{\Omega_{\mathrm{G}}\}) + (1-t)\lVert\beta_{\Omega_{\mathrm{G}}}+ \partial h\rVert^2_{L^2(\Omega_{\mathrm{G}})}\leq C\,,
\]
thanks to the fact that $h$ is a datum. We may and do assume that $ \lVert\partial f\rVert_{L^2(\Omega_{\mathrm{G}})}^2 \geq ||\beta_{\Omega_{\mathrm{G}}}||_{L^2(\Omega_{\mathrm{G}})}^2 $, otherwise we are done. Consequently
\[
\|\partial f \|^2_{L^2(\Omega_{\mathrm{G}})}- \|\beta_{\Omega_{\mathrm{G}}}\|^2_{L^2(\Omega_{\mathrm{G}})}=\left|\|\partial f \|^2_{L^2(\Omega_{\mathrm{G}})}- \|\beta_{\Omega_{\mathrm{G}}}\|^2_{L^2(\Omega_{\mathrm{G}})}\right| \leq \|\partial f+\beta_{\Omega_{\mathrm{G}}}\|^2_{L^2(\Omega_{\mathrm{G}})} \leq C\,,
\]
which gives the desired estimate
\begin{equation}\label{eq:gradientL2}
\|\partial f\|^2_{L^2(\Omega_{\mathrm{G}})}\leq C\,.
\end{equation}
Using the Poincar\'{e} inequality we then get an estimate on the $L^2$-norm of $ f $. Now, setting
\[
\psi:=- \left( \frac{1}{2}s^{\mathrm{Bis}}(\Omega_{\mathrm{G}}) +|\beta_{\Omega_{\mathrm{G}}}|^2_{\Omega_{\mathrm{G}}}  +|\partial f|_{\Omega_{\mathrm{G}}}^2\right)+ (1-t)\mathrm{e}^{\frac{h}{n-1}}\left(\frac{1}{2}s^{\mathrm{Bis}}(\Omega_h)+ \lvert\beta_{\Omega_h}\rvert^2_{\Omega_h}\right)
\]
and rearranging equation \eqref{continuity},  we can  consider $ f $ as a solution of the linear equation
\[
\Delta_{\Omega_{\mathrm{G}}} \phi+2\Re( g(\beta_{\Omega_{\mathrm{G}}}, \partial \phi))=\psi\,.
\]
This allows us to  use the Calder\'{o}n-Zygmund inequality \cite[Theorem 9.11]{Gilbarg-Trudinger} to deduce
\[
\|f\|_{W^{2,2}(\Omega_{\mathrm{G}})} \leq C \left( \| f\|_{L^2(\Omega_{\mathrm{G}})} + \|\psi\|_{L^2(\Omega_{\mathrm{G}})} \right) \leq C\,,
\]
thanks to \eqref{eq:gradientL2}. We can obtain higher-order estimates by bootstrapping. Using Sobolev embeddings gives us the estimates we were looking for, concluding the existence part of the theorem.

As regards quaternionic balanced metrics, let $\Omega$ be a quaternionic balanced of unit volume  in the conformal class of $\Omega_{{\rm G}}$. Then, it is, in particular, quaternionic Gauduchon and thus  \eqref{eqn:neccond1.1}  holds. On the other hand, quaternionic balanced metrics are Bismut-Ricci flat by Proposition \ref{Prop:alpha} (\ref{alphaBismut}). Hence, the Bismut-Ricci form of $\Omega_{{\rm G}}$ with respect to $ I $ is $\partial\bar \partial $-exact, implying   $\Gamma^{{\rm Bis}}(\{\Omega_{{\rm G}}\})=0.$
Conversely, using the previous part of the theorem, we can find a solution $f\in C^{\infty}(M, \R)$ to  \eqref{eqndarisolvere}. On the other hand, integrating again \eqref{eqndarisolvere} and using that $\Gamma^{\mathrm{Bis}}(\{\Omega_{{\rm G}}\})=0$ we obtain that $\beta_{\Omega_{{\rm G}}}=-\partial f$. This gives us the claim. 
\end{proof}

\begin{rmk}
If $ (M^n,\mathsf{H}) $ is a compact $ \mathrm{SL}(n,\H) $-manifold we can recover  \cite[Proposition 16]{GLV} as a corollary of Theorem \ref{Thm:main1.1}. Indeed, the $ \mathrm{SL}(n,\H) $ condition implies that $ \alpha_{\Omega_{\mathrm{G}}}=\partial f $, for some $ f\in C^\infty(M,\R) $, where $ \Omega_{\mathrm{G}} $ is any Gauduchon hyperhermitian metric on $ (M,\mathsf{H}) $. The metric $ \Omega_{\mathrm{G}} $ then satisfies
\[
s^{\mathrm{Bis}}(\Omega_{\mathrm{G}})=s^{\mathrm{Ch}}(\Omega_{\mathrm{G}})-2|\alpha_{\Omega_{\mathrm{G}}} + \beta_{\Omega_{\mathrm{G}}} |^2_{\Omega_{\mathrm{G}}}=-2\Delta_{\Omega_{\mathrm{G}}} f-2|\partial f + \beta_{\Omega_{\mathrm{G}}} |^2_{\Omega_{\mathrm{G}}}
\]
which, integrated gives \eqref{eqn:neccond1.2}. 
Also, suppose $ s^{\mathrm{Ch}}(\Omega_{\mathrm{G}})-2|\alpha_{\Omega_{\mathrm{G}}}|^2_{\Omega_{\mathrm{G}}}=-2\Delta_{\Omega_{\mathrm{G}}} f-2|\partial f|^2_{\Omega_{\mathrm{G}}} $ has a sign, then the same is true for
\[
\mathrm{e}^{f}\left(|\partial f|^2_{\Omega_{\mathrm{G}}}+\Delta_{\Omega_{\mathrm{G}}} f\right)=\Delta_{\Omega_{\mathrm{G}}} (\mathrm{e}^{f})\,.
\]
By the maximum principle $ s^{\mathrm{Ch}}(\Omega_{\mathrm{G}})-2|\alpha_{\Omega_{\mathrm{G}}}|^2_{\Omega_{\mathrm{G}}} $ must vanish identically, so that \eqref{eqn:neccond1.1} is satisfied.
\end{rmk}

 
One can observe that the search for a quaternionic Gauduchon metric within a conformal class can be considered as a particular instance of the problem of prescribing the Bismut scalar curvature on a compact complex manifold. This problem was  taken into account only in the constant case, i.e. the Bismut-Yamabe problem, in \cite{Barbarone}. On the other hand, many results about the related problem of prescribing the Chern scalar curvature  can be found in the literature, see for instance \cite{ACS, bolo, yu} and the references therein. 

We now give another characterisation of the existence of quaternionic Gauduchon and quaternionic balanced metrics in terms of currents. 


\begin{prop}\label{qPaulcurrent}
Let $(M^n, \mathsf{H})$ be a compact hypercomplex manifold with $n\ge 2$. Then, $M$ admits no quaternionic Gauduchon metric if and only if there exists a non-zero,  $\partial \partial_J$-exact, q-real and q-positive $(2,2n)$-current. Furthermore, $M$ admits no quaternionic balanced metrics if and only if there exists a non-zero,  $\partial $-exact, q-real and q-positive $(2,2n)$-current.
\end{prop}
\begin{proof}
Let us begin with quaternionic Gauduchon metrics.
We consider the following spaces:
\[
W_1=\{\phi \in \Lambda^{2n-2,0}_IM \mid J\bar \phi=\phi\,,\,  \phi\mbox{ q-positive}\}\,,\quad W_2=\{\psi\in \Lambda_I^{2n-2,0}M \mid \partial\partial_J \psi=0\}\,.
\] 
Using Lemma \ref{Lem:bijection}, every $\phi\in W_1$ can be written as $\phi=\Omega^{n-1}$ for some hyperhermitian metric $\Omega$. Then,  the non existence of quaternionic Gauduchon metrics on $M$ is equivalent to  
 $W_1\cap W_2=\emptyset$. Thus, thanks to the Hahn-Banach Theorem, see for instance \cite[Theorem 31]{GLV}, we can find a current $T\in \mathcal D_I^{2,2n}(M)$  such that 
\[
T|_{W_1}>0\,, \quad T|_{W_2}=0\,.
\]
The fact that $T|_{W_1}>0$  guarantees that $T$ is both q-real and q-positive.
On the other hand, since $T|_{W_2}=0$,  we infer that 
$\partial T=0$ and $\partial_JT=0$.
Hence,  $T$ defines a cohomology class
\[
[T]_{{\rm qBC}}\in H_{{\rm qBC}}^{'2,2n}(M, \R):=\frac{ \{ T\in \mathcal D_I^{2,2n}(M) \mid \partial T=\partial_JT=0\}}{\partial\partial_J \mathcal D_I^{0,2n}(M) }\,.
\]
Now, the claim is equivalent to  prove that $[T]_{{\rm qBC}}=0$. In order to do this, we identify $T$ with an element in $\Lambda^{2,2n}_IM\otimes \mathcal D^0(M) $, where $\mathcal D^0(M)$ are the distributions on $M$.  This identification is compatible with $\partial$ and $\partial_J$. Furthermore, adapting the proof in \cite[pp. 1151--1152]{GLV} with appropriate bidegrees, one can see that $H_{{\rm qBC}}^{'2,2n}(M, \R)\simeq H^{2,2n}_{{\rm qBC}}(M, \R)$.
Moreover, the pairing
\[
\langle\cdot, \cdot\rangle\colon H^{2,2n}_{{\rm qBC}}(M)\times H^{2n-2,0 }_{{\rm qA}}(M)\to \mathbb R \quad \quad \langle[\phi]_{{\rm qBC}}, [\psi]_{{\rm qA}}\rangle=\int_M\phi\wedge \psi
\]
is well-defined and non-degenerate, which follows from adapting the arguments in \cite[Section 2]{Sch} with suitable Laplacians, whose explicit expression can be found in \cite{GLV}.
Now,  since $\langle[T]_{{\rm qBC}}, [\psi]_{{\rm qA}}\rangle=0$, for any $\partial \partial_J$-closed $(2n-2, 0)$-form $\psi$, then $[T]_{{\rm qBC}}=0$, proving the claim.

Regarding quaternionic balanced metrics, the strategy is entirely the same, it is sufficient to choose $W_1$ as before, replace $W_2$ with
\[
W_2=\{\psi\in \Lambda_I^{2n-2,0}M \mid \partial \psi=0\}\,,
\]
and use the non-degenerate pairing 
\[
\langle\cdot, \cdot\rangle\colon H^{2, 2n }_{\partial}(M)\times H^{2n-2,0}_{\partial}(M)\to \mathbb R\,, \quad \quad \langle[\phi], [\psi]\rangle=\int_M\phi\wedge \psi
\] to conclude.
\end{proof}

\noindent In Subsection \ref{qsGnoqb} we shall use Proposition \ref{qPaulcurrent} to provide examples of compact quaternionic strongly Gauduchon manifolds not admitting quaternionic balanced metrics.

 As highlighted by Theorem \ref{Thm:main1.1} and Example \ref{Ex:Andrada-Tolcachier}, it is not always possible to find quaternionic Gauduchon metrics on a given compact hypercomplex manifold. 
This can  also be understood  from Proposition \ref{qPaulcurrent}. Chosen $\Theta\in \Lambda_{I}^{2n, 0}M$ to be a q-positive volume form on $M$, any  $\partial \partial_J$-exact, q-real and q-positive $(2,2n)$-current can be written as
\begin{equation}\label{eq:positivecurrent}
  T=\partial \partial_J(f\bar \Theta)=\partial \partial_J f \wedge \bar \Theta+  \partial f \wedge \partial_J \bar \Theta- \partial_J f \wedge \partial \bar \Theta+ f\partial \partial_J \bar \Theta\,, \quad  f\in \mathcal D^0(M, \mathbb C)\,.
\end{equation}
  Note that the distribution $f$ might be complex valued. However, if we further impose $\partial\partial_J\bar \Theta=0$, then since $T$ is q-real, we obtain that $h=-\frac{\sqrt{-1}}{2}(f-\bar f)\in \mathcal D^{0}(M, \mathbb R)$ satisfies 
  \[
  \partial\partial_J h \wedge \bar \Theta+ \partial h \wedge \partial_J \bar \Theta- \partial_Jh \wedge \partial \bar \Theta=0\,.
  \]
Then, $h$ satisfies a second order elliptic equation without zero order terms. So, applying standard elliptic regularity, $h\in C^{\infty}(M, \mathbb R)$, and by the maximum principle it is constant. On the other hand, if $c$ is a constant, then, of course,  $\partial\partial_J (c\bar \Theta)=0$ so, up to adding a suitable constant, we may assume that $f$ is real-valued. 

  Stemming from this discussion, we can give a weaker sufficient condition for the existence of quaternionic Gauduchon metrics.
  \begin{prop}\label{volformqG}
  Let $(M^n, \mathsf H )$ be a compact hypercomplex manifold. If there exists a q-positive volume form $\Theta\in \Lambda_I^{2n, 0}M$ such that $\partial\partial_J\bar \Theta=0$, then, there exists a quaternionic Gauduchon metric compatible with $\mathsf{H}$.
  \end{prop}
\begin{proof}
Thanks to Proposition \ref{qPaulcurrent}, the existence of a quaternionic Gauduchon metric is equivalent to the fact that any $\partial \partial_J$-exact, q-real and q-positive $(2,2n)$-current is zero. Now, as above, any such current $T$ can be written as in \eqref{eq:positivecurrent} and we may assume that $f\in \mathcal{D}^0(M,\R)$. Consider the following closed convex cone
\begin{equation}\label{conepositive}
C=\{f\in \mathcal D^0(M, \mathbb R)\quad |\quad \partial\partial_J(f\bar \Theta)\ge 0 \}\,.
\end{equation}
It is not  hard to prove that $C\cap C^{\infty}(M, \R)$ is dense in $C$ in the weak sense of distributions. So, fixed $f\in C$, we can find $\{f_n\}_n\subseteq C\cap C^{\infty}(M,\R)$ such that $f_n\to f$ in the weak sense of distributions.  On the other hand, $f_n$ satisfies 
\[
\partial \partial_J f_n \wedge \bar \Theta+ \partial f_n \wedge \partial_J \bar \Theta- \partial_J f_n \wedge \partial \bar \Theta \ge 0 
\]
and so we can apply the maximum principle obtaining that $f_n$ is constant. This implies that $f$ is constant,  giving the claim. 
\end{proof}

We remark that the hypothesis of q-positivity of $ \Theta$ cannot be removed.  Indeed, we can consider a non-${\rm SL}(n, \mathbb H)$-manifold $(M,\mathsf H)$ with $K(M, I)$ holomorphically trivial.  Then, we can find an holomorphic volume form $ \Theta$ which is, in particular, $\bar \partial \bar \partial_J$-closed but not q-positive, since the manifold is not ${\rm SL}(n, \mathbb H)$. On the other hand, Proposition \ref{volformqG} will  guarantee the existence of a quaternionic Gauduchon metric. Then, applying Theorem \ref{Thm:main1.2}, we will obtain that $(M, \mathsf H  )$ is ${\rm SL}(n, \mathbb H)$, reaching a contradiction.  An example of  a non-${\rm SL}(n, \mathbb H)$-manifold with holomorphically trivial canonical bundle can be found in Subsection \ref{Ex:Andrada-Tolcachier}.

\smallskip
By a very well-know result, see \cite[Proposition 1.9]{Mich}, the class of compact balanced manifolds is closed under products and proper holomorphic submersions. We conclude this section studying  the same closedness properties for the class of compact quaternionic balanced manifolds. 
First of all, we recall the definition of hypercomplex submersion, see for instance \cite{Alekseevsky-Marchiafava, IMV2, IMV}.
\begin{defn}
Let $(M, \mathsf H)$ and $(M', \mathsf H')$ be two hypercomplex manifolds. A  map  $f\colon M\to M'$ is called \emph{hypercomplex}  if,  for any $L\in\mathsf H$,  there exists $L'\in\mathsf H'$ such that $f\colon (M,  L )\to (M', L')$ is holomorphic.  If $f$ is also a submersion, then $f$ will be called a \emph{hypercomplex submersion}. 
\end{defn}
  Examples of hypercomplex submersions between compact manifolds can be produced standardly looking, for instance, at finite coverings of compact hypercomplex manifolds. 
 Another example can be found in \cite[Section 4]{IMV}. 
\begin{prop}
 Let $(M, \mathsf H )$  and  $(M', \mathsf H')$ be two quaternionic balanced manifolds and $(M'', \mathsf H'')$ be a hypercomplex manifold. Then, $(M\times M', \mathsf H\oplus\mathsf H')$ is quaternionic balanced. Moreover, if $f\colon (M, \mathsf H )\to (M'', \mathsf H'' )$ is a  proper hypercomplex submersion, then $(M'', \mathsf H'')$ is quaternionic balanced.  \end{prop}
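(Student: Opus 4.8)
For the product statement, endow $M\times M'$ with the product hypercomplex structure $\mathsf H\oplus\mathsf H'$, for which the splitting of the exterior differential reads $\partial=\partial_M+\partial_{M'}$, and take the product metric $\tilde\Omega=\Omega+\Omega'$ (pullbacks understood), which is manifestly q-real and q-positive. Writing $n,m$ for the quaternionic dimensions of $M,M'$, one has $\Omega^{n+1}=0$ and $\Omega'^{m+1}=0$ for degree reasons, so the binomial expansion of $\tilde\Omega^{n+m-1}$ collapses to
\[
\tilde\Omega^{n+m-1}=\binom{n+m-1}{n-1}\Omega^{n-1}\wedge\Omega'^{m}+\binom{n+m-1}{n}\Omega^{n}\wedge\Omega'^{m-1}\,.
\]
Applying $\partial=\partial_M+\partial_{M'}$ and using the quaternionic balanced conditions $\partial_M\Omega^{n-1}=0$, $\partial_{M'}\Omega'^{m-1}=0$ on the factors, together with $\partial_M\Omega^{n}=0$ and $\partial_{M'}\Omega'^{m}=0$ (top holomorphic powers on each factor being automatically $\partial$-closed by degree) and the fact that $\partial_M$ kills forms pulled back from $M'$ and vice versa, every resulting term vanishes. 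Hence $\partial\tilde\Omega^{n+m-1}=0$ and $(M\times M',\mathsf H\oplus\mathsf H')$ is quaternionic balanced.

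For the submersion statement, write $n''$ for the quaternionic dimension of $M''$ and let $f_*$ denote integration over the fibres, which are compact and boundaryless (by properness) of complex dimension $2(n-n'')$; since $f$ is proper and holomorphic, $f_*$ lowers the bidegree by $(2(n-n''),2(n-n''))$, commutes with $\partial$ (Stokes on closed fibres, preserving types), and commutes with the $\mathsf H$-action as $f$ is hypercomplex. The plan is to produce on $M''$ a q-positive, $\partial$-closed $(2n''-2,0)$-form and then invoke Lemma \ref{Lem:bijection} to recover a quaternionic balanced metric. The natural candidate is
\[
\rho'':=f_*\!\left(\Omega^{n-1}\wedge\bar\Omega^{\,n-n''}\right)\in\Lambda^{2n''-2,0}_{I''}M''\,,
\]
whose bidegree is correct. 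Decomposing $\Omega=\Omega_{\mathcal H}+\Omega_{\mathcal V}$ into horizontal and vertical parts (there is no mixed part, by orthogonality of $\mathcal H,\mathcal V$ and their $\mathsf H$-invariance), the only contribution to the fibre integral comes from the term $\binom{n-1}{n''-1}\Omega_{\mathcal H}^{\,n''-1}\wedge\Omega_{\mathcal V}^{\,n-n''}\wedge\bar\Omega_{\mathcal V}^{\,n-n''}$, in which $\Omega_{\mathcal V}^{\,n-n''}\wedge\bar\Omega_{\mathcal V}^{\,n-n''}$ is a positive multiple of the fibre volume and $\Omega_{\mathcal H}^{\,n''-1}$ is q-positive; thus $\rho''$ is a fibrewise average of q-positive forms and is itself q-positive.

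The heart of the matter, and the step I expect to be the main obstacle, is the closedness $\partial\rho''=0$. Since $f_*$ commutes with $\partial$ and $\partial\Omega^{n-1}=0$, this reduces to $f_*(\Omega^{n-1}\wedge\partial\bar\Omega^{\,n-n''})=0$, which is \emph{not} formal: contrary to the complex balanced case, where the closed form $\omega^{m-1}$ already carries the balanced bidegree $(m-1,m-1)$ and pushes forward directly, the quaternionic balanced datum $\Omega^{n-1}$ is of pure type $(2n-2,0)$, so one is forced to introduce the antiholomorphic factor $\bar\Omega^{\,n-n''}$, which is \emph{not} $\partial$-closed. A type/parity analysis shows that the surviving part of the integrand differentiates the vertical antiholomorphic volume in horizontal holomorphic directions, and a naive global integration by parts over $M$ only returns the identity $\int_{M''}\partial\rho''\wedge\eta=-\int_{M''}\rho''\wedge\partial\eta$, hence is circular. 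I therefore expect the vanishing to require genuine input from the submersion geometry: one must exploit the $\mathsf H$-invariance and orthogonality of the distributions to express the offending term through the second fundamental form of the fibres and cancel it using $\partial\Omega^{n-1}=0$ on the total space, integrating the resulting horizontal derivative out by Stokes on the closed fibres. The dual, current-theoretic route through Proposition \ref{qbcurrent} — pulling back a nonzero $\partial$-exact, q-real, q-positive $(2,2n'')$-current from $M''$ and wedging with $\bar\Omega^{\,n-n''}$ to land in bidegree $(2,2n)$ on $M$ — meets the very same $\partial\bar\Omega$-term, confirming that this is the essential difficulty rather than an artefact of the chosen construction. Once $\partial\rho''=0$ is secured, Lemma \ref{Lem:bijection} furnishes a metric $\Omega''$ with $\Omega''^{\,n''-1}=\rho''$ and $\partial\Omega''^{\,n''-1}=0$, so $(M'',\mathsf H'')$ is quaternionic balanced.
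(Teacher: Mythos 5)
The product half of your proposal is correct and coincides with the paper's own argument, which performs the same expansion of $(\Omega+\Omega')^{n+m-1}$ and records the claim as trivial.

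The submersion half has a genuine gap, and it is exactly the one you flag yourself: $\partial\rho''=0$ is never proved, and your closing paragraph about the second fundamental form and Stokes along the fibres is a plan, not an argument. Note first that your candidate form is the paper's. The paper takes $\gamma=f_*\Omega_M^{n-1}$, ``integration along the fibres'', and its pointwise description (coefficients $\tilde a_{ij}=\int_F a_{ij}\,\mathrm{vol}_F$ in a frame of horizontal lifts completed by a unitary vertical frame) is, up to a positive constant, your $f_*\bigl(\Omega^{n-1}\wedge\bar\Omega^{\,n-n''}\bigr)$: only the fully vertical antiholomorphic component of $\bar\Omega^{\,n-n''}$ survives fibre integration, and in a unitary vertical frame $\Omega_{\mathcal V}^{\,n-n''}\wedge\bar\Omega_{\mathcal V}^{\,n-n''}$ is a positive multiple of $\mathrm{vol}_F$. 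Your q-positivity argument via the horizontal/vertical splitting is also the paper's. The two texts diverge only at closedness: the paper asserts in one sentence that $\partial\gamma=0$ because $f$ is $(I,I'')$-holomorphic and $\partial\Omega_M^{n-1}=0$, i.e.\ it applies $f_*\circ\partial=\partial\circ f_*$ directly to $\Omega_M^{n-1}$ and never meets the antiholomorphic factor which, as you correctly observe, is forced on degree grounds (honest fibre integration of a $(2n-2,0)$-form along fibres of positive complex dimension would land in negative antiholomorphic degree).

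Moreover, your worry is stronger than you state: for this candidate form the step is simply \emph{false}, so no amount of extra geometric input can close your gap. Take $M=T^{8}\times T^{4}$ with the flat product hypercomplex structure, let $\tilde\Omega_N,\Theta_F$ be the flat $(2,0)$-forms of the factors, fix a non-constant $\lambda\in C^\infty(T^8,\R_{>0})$ and set $\Omega=\lambda^{-1}\tilde\Omega_N+\lambda\,\Theta_F$. Then $\Omega^{2}=\lambda^{-2}\tilde\Omega_N^{2}+2\,\tilde\Omega_N\wedge\Theta_F$ is $\partial$-closed, so $\Omega$ is quaternionic balanced, and the projection $f\colon M\to T^8$ is a proper hyperhermitian submersion onto $(T^8,\lambda^{-1}\tilde\Omega_N)$. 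However, both your $\rho''$ and the paper's $\gamma$ come out equal to $c\,\lambda\,\tilde\Omega_N$ with $c>0$, and $\partial(\lambda\tilde\Omega_N)=\partial\lambda\wedge\tilde\Omega_N\neq 0$ by the Hard Lefschetz property recalled in Section \ref{Sec:ab}. The conclusion of the Proposition is not contradicted ($\tilde\Omega_N$ itself is quaternionic balanced), but it is not produced by this push-forward: the fibre integral must first be renormalised (here by multiplying by $\lambda^{-1}$, the inverse square root of the fibre-volume function) before it becomes closed, and neither your proposal nor the paper's proof explains how to do that in general. In short: your construction and positivity argument match the paper's, but the proposal is incomplete at the decisive step, and that step, as set up by both you and the paper, requires a genuinely different argument rather than a cleverer integration by parts.
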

\begin{proof}
The first claim is trivial by considering $\Omega=\Omega_M+ \Omega_{M'}$, where $\Omega_M $ and $\Omega_{M'}$ are quaternionic balanced metrics, respectively,  on $(M, \mathsf H)$ and $(M', \mathsf H')$. 

Let us now prove the second assertion. Let $ n $ and $ m $ be the quaternionic dimensions of $ M $ and $ M'' $ respectively. We fix $\Omega_M$ to be a quaternionic balanced metric on $ M $.
For the sake of simplicity, we will also fix basis $ (I,J) $ and $ (I'',J'') $ for the hypercomplex structures $\mathsf{H}$ and $ \mathsf{H}'' $, respectively, such  that $f$ is both $(I, I'')$ and $(J, J'')$-holomorphic. Then, since $f$ is proper, we can consider the pushforward
\[
 \gamma=f_{*}\Omega_M^{n-1}\,.
 \]
  Now, since $f$ is $(I, I'')$-holomorphic and $\partial\Omega_M^{n-1}=0$, then $\gamma\in\Lambda_{I''}^{2m-2,0}M''$ and $\partial\gamma=0 $. Thus, thanks to Lemma \ref{Lem:bijection}, it is sufficient to prove that $\gamma$ is q-real and q-positive. First of all, we note that q-realness is trivial because $f$ is $(J, J'')$-holomorphic and then $J''f_*=f_*J$.
  Using an adaptation to the hypercomplex setting of the argument in \cite[Proof of Proposition 1.9]{Mich}, one can easily see that $\gamma$ is q-positive, which allows us to conclude.
\end{proof}

\noindent A direct consequence of the second statement is that if $f\colon M \to M''$ is a finite covering, then $M$ is quaternionic balanced if and only if $M''$ is quaternionic balanced.


\section{Proof of Theorem \ref{Thm:main1.4}}\label{Sec:sHKT}
In this section, we will study properties of compact strong HKT manifolds establishing Theorem \ref{Thm:main1.4}.
First of all, we prove two preliminary formulae which hold in the hyperhermitian setting. One of these can be considered as  the quaternionic analogue of \cite[Formula (2.13)]{Alexandrov-Ivanov}.

 \begin{prop}\label{sHKT}
 Let $(M^n, \mathsf H , g)$ be a hyperhermitian manifold with $ n\geq 2 $. Then, for any $Z\in T^{1,0}_IM$, we have 
 \begin{equation}\label{eq:formulapartialJalpha}
\partial_J \alpha(Z, J \bar Z)=|\iota_Z \partial \bar \Omega |^2 + |\iota_{J\bar Z} \partial \bar \Omega|^2- n\frac{\iota_{J\bar Z}\iota_Z \left( \partial \partial_J \bar \Omega  \right)\wedge \bar \Omega^{n-1}}{\bar \Omega^n}\,.
\end{equation}
Moreover,
\begin{equation}\label{eq:sHKT}
\frac{1}{2}s^{\mathrm{Ch}}(\Omega)+g(\partial \partial_J \bar \Omega, \Omega \wedge \bar \Omega)-|\partial \bar \Omega|^2=0\,.
\end{equation}

 \end{prop}
 \begin{proof}
 By \eqref{eqn:dedejomegan}, we have that
\[
\left(-\partial_J \alpha +\alpha \wedge J^{-1}\bar \alpha\right)\wedge \bar \Omega^n=\partial \partial_J \bar \Omega^n=n\partial \partial_J \bar \Omega \wedge \bar \Omega^{n-1}+n(n-1)\partial \bar \Omega \wedge \partial_J \bar \Omega \wedge \bar \Omega^{n-2}\,.
\]  
Now, we fix $ Z\in T^{1,0}_IM $ and compute
\[
\iota_{J\bar Z}\iota_Z \left(\partial \bar \Omega \wedge \partial_J \bar \Omega \wedge \bar \Omega^{n-2} \right)= (\iota_{ Z}\partial \bar \Omega) \wedge (\iota_{J\bar Z}\partial_J\bar  \Omega) \wedge \bar \Omega^{n-2} -(\iota_{ J\bar Z}\partial \bar \Omega) \wedge (\iota_{Z}\partial_J \bar \Omega) \wedge \bar \Omega^{n-2}\,. 
\]
Using \eqref{eqn:hodgerel} and  Lemma \ref{Lem:alphabeta}, we infer that 
\[
\begin{split}
\iota_{J\bar Z}\iota_Z \left(\partial \bar \Omega \wedge \partial_J \bar \Omega \wedge \bar \Omega^{n-2} \right)
=\,&\frac{1}{n(n-1)}\left( |\alpha(Z)|^2+|\alpha(J\bar Z)|^2 - |\iota_Z \partial \bar \Omega |^2 - |\iota_{J\bar Z} \partial \bar \Omega|^2\right)\bar  \Omega^{n}\\
=\,&\frac{1}{n(n-1)}\left( (\alpha \wedge J^{-1}\bar \alpha) (Z,J\bar Z) - |\iota_Z \partial \bar \Omega |^2 - |\iota_{J\bar Z} \partial \bar \Omega|^2\right) \bar \Omega^{n}\,,
\end{split}
\]
 giving \eqref{eq:formulapartialJalpha}. Finally, \eqref{eq:sHKT} follows from \eqref{eq:formulapartialJalpha} by taking traces with respect to $\Omega$.
\end{proof}


As explained in the introduction, Theorem \ref{Thm:main1.4} goes in the direction of the so-called Fino-Vezzoni conjecture, see \cite[Problem 3]{FV}. Recall that a SKT (or pluriclosed) metric is a Hermitian metric such that the torsion of its Bismut connection is $d$-closed.
\noindent Then, the Fino-Vezzoni conjecture asserts that a compact complex manifold admitting both SKT and balanced metrics is necessarily K\"ahler.\\
 Recall that an HKT metric $g$ is said to be strong if the torsion of the Bismut connection of $g$ is closed. Hence, it is clear that an HKT metric $g$  is strong if and only if $\omega_L$ are all simultaneously SKT, for $ L\in \mathsf{H} $ or, equivalently, if $\partial\partial_J\bar \Omega=0$ \cite[Proposition 5.4]{Verbitsky (2009)}. Hence, the question of whether strong HKT and balanced hyperhermitian metrics can coexist on a non-hyperk\"ahler manifold is a particular instance of the Fino-Vezzoni conjecture.

Before we prove the announced Theorem \ref{Thm:main1.4} we need the following preliminary result.

\begin{prop}\label{Prop:FV}
Let $ (M^n,\mathsf{H}, g) $ be a  non-hyperk\"ahler,  strong HKT manifold with $ n\geq 2 $. Then $ \partial_J \alpha_\Omega\geq 0 $ and $ \partial_J \alpha_\Omega\neq 0 $. In particular, $ c_1^{\mathrm{qBC}}(M,I,J) $ admits a q-semipositive representative and, if $ M $ is compact, it is non zero. Hence, a non-hyperk\"ahler, strong HKT compact manifold cannot be ${\rm SL}(n, \H)$.
\end{prop}
\begin{proof}
From \eqref{eq:formulapartialJalpha}, using that $\partial\partial_J\bar \Omega=0$,  we see that
\[
\partial_J \alpha_\Omega (Z,J\bar Z)=|\iota_Z \partial \bar \Omega |^2 + |\iota_{J\bar Z} \partial \bar \Omega|^2 \geq 0\,, \quad Z\in T^{1,0}_IM
\]
and, since $ \Omega $ is not hyperk\"ahler,  there exists at least one $ Z \in T_I^{1,0}M$ such that 
$
\partial_J\alpha_{\Omega}(Z, J\bar Z)>0\,, 
$  proving the first statement.\\
Furthermore, suppose $ M $ is compact and $c_1^{{\rm qBC}}(M, \mathsf H)=0$, then there would be a function $f\in C^{\infty}(M, \mathbb R)$ such that 
$
0\le \partial_J\alpha_{\Omega}=\partial \partial_Jf. 
$ By the maximum principle, we infer that $\partial_J\alpha_{\Omega}=0$ which is impossible, thanks to the first part of the proof. The last part of the statement follows straightforwardly from the fact that the ${\rm SL}(n, \H)$-condition forces $c_1^{{\rm qBC}}(M, \mathsf H)=0$.
\end{proof}
We are now ready to prove Theorem \ref{Thm:main1.4}. 


\begin{proof}[Proof of Theorem \ref{Thm:main1.4}]
Let us suppose by contradiction that there exists  a balanced hyperhermitian metric $\Omega$. Under the assumptions of the theorem,  we can  choose another hyperhermitian metric $\tilde{\Omega}$ such that  $\partial_J\alpha_{\tilde{\Omega}}\ge 0$ but $\partial_J\alpha_{\tilde{\Omega}}\ne 0$. Now, we have that 
$$
c_1^{{\rm qBC}}(M, \mathsf H)\cdot[\Omega^{n-1}\wedge \bar \Omega^n]_{\partial}=\int_M\partial_J\alpha_{\tilde{\Omega}}\wedge\Omega^{n-1}\wedge \bar \Omega^n=0  \,,
$$integrating by parts and using the balanced condition.
This holds if and only if ${\rm tr}_{\Omega}(\partial_J\alpha_{\tilde{\Omega}})=0$, using q-semipositivity, which is equivalent to $\partial_J\alpha_{\tilde{\Omega}}=0,$ against the fact that $c_1^{{\rm qBC}}(M,  \mathsf H)\ne0.$ For $n=1$ the last claim now follows from the fact that balanced metrics are K\"ahler, while, for $n\ge 2$, it follows from Proposition \ref{Prop:FV} and the first part of the theorem.
\end{proof}
From Corollary \ref{Cor:kod} we also deduce the following fact about the Kodaira dimension.

\begin{cor}
Let $ (M,\mathsf{H}) $ be a hypercomplex manifold admitting a compatible strong HKT metric. Then,  $ \kappa(M,L)=-\infty $,  for all $ L\in \mathsf{H} $.
\end{cor}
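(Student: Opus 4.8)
The plan is to read this statement off as a direct combination of the two results just established: Proposition~\ref{Prop:FV}, which extracts a nonvanishing first quaternionic Bott-Chern class from a strong HKT metric, and Corollary~\ref{Cor:kod}, which converts such nonvanishing into the value $-\infty$ of the Kodaira dimension. Since the whole argument passes through $c_1^{\mathrm{qBC}}$, I will treat $M$ as compact and the strong HKT metric as genuinely strong, that is, not hyperk\"ahler; this is the setting of interest and is consistent with the exclusion made in Theorem~\ref{immain2} (the hyperk\"ahler case is degenerate, being $\mathrm{SL}(n,\H)$ with $c_1^{\mathrm{qBC}}(M,\mathsf{H})=0$ and $\kappa=0$).

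First I would record that a strong HKT metric $\Omega$ is in particular HKT, so that $\partial\Omega=0$ and hence $\partial\Omega^{n-1}=0$; therefore $\Omega$ is quaternionic balanced. This places us squarely within the hypotheses of Corollary~\ref{Cor:kod}, according to which, on a compact hypercomplex manifold carrying a compatible quaternionic balanced metric, one has $\kappa(M,L)=-\infty$ for every $L\in\mathsf{H}$ precisely when $c_1^{\mathrm{qBC}}(M,\mathsf{H})\neq 0$. The problem thus reduces to verifying this single nonvanishing.

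To establish $c_1^{\mathrm{qBC}}(M,\mathsf{H})\neq 0$ I would simply invoke Proposition~\ref{Prop:FV}: since $\Omega$ is strong HKT and not hyperk\"ahler, $\partial_J\alpha_\Omega$ is a nonzero q-semipositive representative of $c_1^{\mathrm{qBC}}(M,I,J)$, and compactness together with the fact that quaternionic plurisubharmonic functions on a compact manifold are constant forces the class itself to be nonzero. Feeding $c_1^{\mathrm{qBC}}(M,\mathsf{H})\neq 0$ into Corollary~\ref{Cor:kod} then yields $\kappa(M,L)=-\infty$ for all $L\in\mathsf{H}$, as claimed.

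I do not expect a genuine analytic obstacle, since every substantive estimate has already been absorbed into Proposition~\ref{Prop:FV} and Corollary~\ref{Cor:kod}; the proof is essentially a bookkeeping of implications. The only delicate point is the tracking of hypotheses: compactness is indispensable, for without it neither the nonvanishing of the Bott-Chern class nor the very notion of Kodaira dimension is under control, and the metric must be non-hyperk\"ahler, since in the hyperk\"ahler case the conclusion $\kappa=-\infty$ fails and is replaced by $\kappa=0$.
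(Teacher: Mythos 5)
Your proposal is correct and follows essentially the same route as the paper, whose proof is precisely the chain you spell out: a strong HKT metric is HKT, hence quaternionic balanced, Proposition~\ref{Prop:FV} gives $c_1^{\mathrm{qBC}}(M,\mathsf{H})\neq 0$ in the compact non-hyperk\"ahler case, and Corollary~\ref{Cor:kod} then converts this into $\kappa(M,L)=-\infty$ for all $L\in\mathsf{H}$. Your explicit tracking of the compactness and non-hyperk\"ahler hypotheses, which the paper's statement leaves implicit, is appropriate and matches how those two ingredients are actually used.
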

\section{Chern-Einstein hyperhermitian metrics}\label{Sec:Einst}

As mentioned in the introduction, it is natural to define the following Einstein condition for a hyperhermitian metric. 


\begin{defn}
Let $ (M^n,\mathsf{H}) $ be a hypercomplex manifold. A hyperhermitian metric $ \Omega $ is \emph{Chern-Einstein} if the following is satisfied:
\begin{equation}\label{eq:Einstein}
\partial_J\alpha_{\Omega}=\lambda \Omega
\end{equation}
for some  $ \lambda \in C^{\infty}( M,  \R )$. The function $\lambda$ will be called \emph{Einstein factor}. If moreover $\Omega$ is HKT we say that it is \emph{HKT-Einstein}.
\end{defn}

\noindent We observe that, tracing \eqref{eq:Einstein},  we have
\[
s^{\mathrm{Ch}}(\Omega)=2n\lambda\,.
\]
With the next lemma we rephrase the definition in terms of the forms $ \omega_L $, for $ L\in \mathsf{H} $.

\begin{lem}\label{lem:einstein1}
Let $ (M,\mathsf{H}, g) $ be a hyperhermitian manifold. Then,  the following are equivalent:
\begin{enumerate}
\item \label{einst1} $ \Omega $ is Chern-Einstein with Einstein factor $ \lambda $;
\item \label{einst3} $\omega_I$ satisfies
\begin{equation*}
\frac{\Ric_{\omega_I}^{\mathrm{Ch}}-J\Ric_{\omega_I}^{\mathrm{Ch}}}{2}=\lambda \omega_I\,;
\end{equation*}
\item \label{einst2} For any pair of anticommuting complex structures $ L,P\in \mathsf{H} $, 
\[
\frac{\Ric_{\omega_L}^{\mathrm{Ch}}-P\Ric_{\omega_L}^{\mathrm{Ch}}}{2}=\lambda \omega_L\,.
\]
\end{enumerate}
\end{lem}
\begin{proof}
The equivalence of \eqref{einst1} and \eqref{einst3} follows from  Proposition \ref{Prop:alpha} (\ref{Lem:ricdelJalpha}). The fact that \eqref{einst1} is equivalent to \eqref{einst2} can be obtained using the  argument as in the proof of Proposition \ref{Prop:scalar}, thanks to which it is sufficient to assume $P=I$ and $L=aJ+bK\in \mathsf H$. On the other hand, this case follows from \eqref{eq:omegaL} and \eqref{eqn:Prop4.2}.
\end{proof}

 \noindent Using \eqref{eq:Ric_J} and \eqref{eq:Ric_K}, one can see that  $ \Omega $ is Chern-Einstein if and only if
\[
\left( \frac{\Ric_{\omega_J}^{\mathrm{Ch}}+\sqrt{-1} \Ric_{\omega_K}^{\mathrm{Ch}}}{2} \right)^{(2,0)}=\lambda \Omega\,.
\] We are grateful to M. Lejmi for this observation.

\subsection{The 1-dimensional case}\label{Subsec:1dim}
We start searching for examples of Chern-Einstein metric on compact hyperhermitian manifolds of quaternionic dimension $1$. These are always HKT for dimensional reasons and they have been classified by Boyer in \cite{Boyer}. Up to conformal equivalence,  the complete list is the following:
\begin{itemize}
    \item Tori with the flat metric;
    \item K3 surfaces with a hyperk\"ahler metric;
    \item Quaternionic Hopf surfaces with the standard locally conformally flat metric.
\end{itemize}
The first two classes are hyperk\"ahler while quaternionic Hopf surfaces are HKT non-K\"ahler, therefore they are the right candidate to check the HKT-Einstein condition. By  \cite{Kato}, a quaternionic Hopf surface is $M:=(\C^2\setminus\{0\})/\Gamma$,  where $\Gamma=\langle \gamma \rangle\ltimes G$ is conjugated to a subgroup in ${\rm SU}(2)\times \R^*$ and
$$
\gamma(z^1, z^2)=(a z^1, bz^2 )\,, \quad a, b\in \C\,, \quad 1<|b|\le |a|\,.
$$
Up to choosing suitable coordinates $\{z^1, z^2\}$ on $\C^2\setminus \{0\}$, we can assume that $G\subseteq {\rm SU}(2)$,  $|a|=|b|>1$, $ab\in \R$ and $Jdz^1=-d\bar z^2$. Hence, the SKT metric on $\C^2\setminus\{0\}$
\[
\omega_I=\frac{\sqrt{-1}}{|z|^2}  \left( dz^1\wedge d\bar z^1+ dz^2\wedge d\bar z^2 \right)\,,
\] descends to a  hyperhermitian metric on $M$. We claim that it is also HKT-Einstein with Einstein factor $\lambda=1$. We have
\[
\begin{aligned}
\Ric^{\mathrm{Ch}}_{\omega_I}&=2\omega_I-\frac{2\sqrt{-1}}{|z|^4}(\bar z^1 dz^1+\bar z^2dz^2)\wedge(z^1 d\bar z^1+z^2 d\bar z^2)\\
\end{aligned}
\]
concluding
\[
\frac{\Ric^{\mathrm{Ch}}_{\omega_I}-J\Ric^{\mathrm{Ch}}_{\omega_I}}{2}=\omega_I\,.
\] 

From this perspective the classification result of Boyer can be regarded as a hypercomplex version of the classical Uniformization Theorem. The fact that there is no representative with negative Einstein constant is not an accident. As a matter of fact we will prove in Proposition \ref{Prop:HKTEinstein} that this peculiar fact happens in any dimension for compact HKT-Einstein manifolds.

\subsection{The case \texorpdfstring{$ \lambda \not \equiv 0 $}{lambda not identically zero}}
Let $ (M^n,\mathsf{H}) $ be a hypercomplex manifold. If $c_1^{\mathrm{qBC}}(M,\mathsf H)$ has a sign, it admits a representative $\pm \Omega$ where $\Omega$ is an HKT metric.
Now, by definition,  there exists $ f\in C^\infty(M,\R) $ such that
\[
\partial_J \alpha_\Omega=\pm \Omega + \partial \partial_J f\,.
\]
It is then easy to show that the conformal change $\Omega_f = \mathrm{e}^{\frac{f}{n}}\Omega$ is hyperhermitian Einstein. Indeed
\[
\partial_J \alpha_{\Omega_f}=\partial_J \alpha_\Omega- \partial \partial_J f=\pm \Omega=\pm \mathrm{e}^{-\frac{f}{n}} \Omega_f\,.
\]
We shall however observe that the condition $ c_1^{\mathrm{qBC}}(M,\mathsf{H})<0 $ can never occur in the compact setting. Indeed, this would give the contradiction
\[
\begin{split}
0>-\int_M\Omega^n \wedge \bar \Omega_f^n&=\int_M \partial_J \alpha_{\Omega_f}\wedge \Omega^{n-1} \wedge \bar \Omega_f^n=\int_M \alpha_{\Omega_f}\wedge \Omega^{n-1} \wedge \partial_J \bar \Omega_f^n=\frac1n\int_M |\alpha_{\Omega_f}|^2_{\Omega} \Omega^n \wedge \bar \Omega_f^n\,.
\end{split}
\]
It is however important to observe that this behaviour is exclusive of the compact case, see Subsection \ref{subeinsteinnoncpt}.

Now, we can prove the following.
\begin{prop}
Let $(M^n,\mathsf{H},g)$ be a compact hyperhermitian manifold with $n\geq 2$ and admitting a quaternionic Gauduchon metric $\tilde{\Omega}$ conformal to $\Omega$. If
	\[
	\partial_J \alpha = \lambda \Omega
	\]
	for some $\lambda \in C^{\infty}(M, \R)$, $\lambda \not \equiv 0$, then $\tilde \Omega$ is  HKT and  $c_1^{{\rm qBC}}(M,\mathsf{H})>0$.
\end{prop}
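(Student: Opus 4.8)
The plan is to show that the Chern-Einstein equation, combined with the existence of the conformal quaternionic Gauduchon metric, forces the conformal factor to be constant; once that is established, HKT-ness of $\tilde\Omega$ is immediate and the positivity of $c_1^{\mathrm{qBC}}$ follows from the obstruction already proved in the compact setting.

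First I would record that $\hat\Omega:=\partial_J\alpha=\lambda\Omega$ is automatically $\partial$- and $\partial_J$-closed. Indeed $\partial_J\hat\Omega=\partial_J^2\alpha=0$, while $\partial\hat\Omega=\partial\partial_J\alpha=-\partial_J\partial\alpha=0$ since $\partial\alpha=0$ by Lemma \ref{lem:alpha}. Thus $\hat\Omega$ is a q-real representative of $c_1^{\mathrm{qBC}}(M,I,J)$. Writing $\tilde\Omega=\mathrm{e}^{\phi}\Omega$ and setting $\mu:=\lambda\,\mathrm{e}^{-\phi}$, so that $\hat\Omega=\mu\tilde\Omega$, the closedness $\partial\hat\Omega=0$ reads $\partial\mu\wedge\tilde\Omega+\mu\,\partial\tilde\Omega=0$. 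Wedging with $\tilde\Omega^{n-2}$ and using $\partial\tilde\Omega^{n-1}=\beta_{\tilde\Omega}\wedge\tilde\Omega^{n-1}$ together with the Hard Lefschetz isomorphism $\Lambda^{1,0}_IM\to\Lambda^{2n-1,0}_IM$ (this is where $n\ge2$ is used) yields the pointwise relation
\[
\mu\,\beta_{\tilde\Omega}=-(n-1)\,\partial\mu .
\]

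Next I would convert this into a scalar elliptic equation for $\mu$. Applying $\partial_J$ to the displayed relation, taking $\mathrm{tr}_{\tilde\Omega}$, and using $\mathrm{tr}_{\tilde\Omega}(\partial_J\partial\mu)=-\Delta_{\tilde\Omega}\mu$, one obtains
\[
\Delta_{\tilde\Omega}\mu=\tfrac{1}{n-1}\,\mathrm{tr}_{\tilde\Omega}(\partial_J\mu\wedge\beta_{\tilde\Omega})+\tfrac{1}{n-1}\,\mu\,\mathrm{tr}_{\tilde\Omega}(\partial_J\beta_{\tilde\Omega}).
\]
Here the quaternionic Gauduchon hypothesis enters decisively: by Lemma \ref{qpaul} and the identity $s^{\mathrm{Bis}}=-2\,\mathrm{tr}_{\tilde\Omega}(\partial_J\beta_{\tilde\Omega})$ we have $\mathrm{tr}_{\tilde\Omega}(\partial_J\beta_{\tilde\Omega})=|\beta_{\tilde\Omega}|^2_{\tilde\Omega}\ge0$. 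The first summand is linear and first order in $\mu$, so the equation takes the schematic form $\Delta_{\tilde\Omega}\mu-(\text{first order term})-\tfrac{1}{n-1}|\beta_{\tilde\Omega}|^2_{\tilde\Omega}\,\mu=0$, a second order linear elliptic equation whose zeroth order coefficient $-\tfrac{1}{n-1}|\beta_{\tilde\Omega}|^2_{\tilde\Omega}$ is nonpositive. Since $\lambda\not\equiv0$ we have $\mu\not\equiv0$, so either $\max_M\mu>0$ or $\min_M\mu<0$; applying the strong maximum principle to $\mu$, respectively to $-\mu$, forces $\mu$ to be a nonzero constant. The favorable sign of the zeroth order term, which is precisely the quaternionic Gauduchon input, is what makes the maximum principle applicable.

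Finally, with $\mu$ a nonzero constant, $0=\partial\hat\Omega=\mu\,\partial\tilde\Omega$ gives $\partial\tilde\Omega=0$, i.e. $\tilde\Omega$ is HKT. Moreover $c_1^{\mathrm{qBC}}(M,I,J)=[\hat\Omega]_{\mathrm{BC}}=[\mu\tilde\Omega]_{\mathrm{BC}}$ is represented by the q-definite form $\mu\tilde\Omega$; since the case $c_1^{\mathrm{qBC}}(M,\mathsf{H})<0$ was excluded in the compact setting above, the constant $\mu$ must be positive, whence $c_1^{\mathrm{qBC}}(M,\mathsf{H})>0$. I expect the main obstacle to be the derivation of the scalar equation in the correct form — verifying that the quaternionic Gauduchon condition produces a zeroth order coefficient of the right sign and that the remaining trace term is genuinely first order — together with the care needed to run the maximum principle when $\mu$ may a priori change sign or vanish.
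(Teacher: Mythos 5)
Your proof is correct, and it shares the paper's overall strategy (reduce to a scalar linear elliptic equation for a function built from $\lambda$ times the conformal factor, then apply the maximum principle), but the execution is genuinely different. The paper never uses $\partial\alpha=0$: it applies $\partial_J$ to the Einstein equation, then makes the key move of raising to the power $n-1$, setting $\psi=(\lambda\mathrm{e}^{-f})^{n-1}$, so that $\partial_J(\psi\tilde\Omega^{n-1})=0$; applying $\partial$ and inserting the quaternionic Gauduchon condition in the form $\partial\partial_J\tilde\Omega^{n-1}=0$ yields $\Delta_{\tilde\Omega}\psi+\tilde g(d\psi,\beta_{\tilde\Omega}+\bar\beta_{\tilde\Omega})=0$, an equation with \emph{no} zeroth-order term, so constancy of $\psi$ follows from the maximum principle with no sign discussion whatsoever. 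You instead exploit $\partial$-closedness of $\partial_J\alpha$ (which requires Lemma \ref{lem:alpha}) together with the Hard Lefschetz isomorphism to extract the pointwise identity $\mu\beta_{\tilde\Omega}=-(n-1)\partial\mu$, and your scalar equation for $\mu$ itself carries the zeroth-order coefficient $-\tfrac{1}{n-1}|\beta_{\tilde\Omega}|^2_{\tilde\Omega}$, whose nonpositivity — this is where Gauduchon enters, via Lemma \ref{qpaul} — lets you run the $c\le 0$ strong maximum principle after the case analysis on the signs of $\max\mu$ and $\min\mu$. The paper's power trick buys a cleaner equation; your route buys the explicit intermediate identity for $\beta_{\tilde\Omega}$ and a self-contained treatment of the ``Moreover'' claim, which the paper leaves implicit in the discussion preceding the proposition (you invoke the exclusion of $c_1^{\mathrm{qBC}}(M,\mathsf{H})<0$ correctly). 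One point you should make explicit when writing this up: $\mathrm{tr}_{\tilde\Omega}(\partial_J\mu\wedge\beta_{\tilde\Omega})$ is a priori a first-order operator with complex coefficients; since $\Delta_{\tilde\Omega}\mu$ and $|\beta_{\tilde\Omega}|^2_{\tilde\Omega}\mu$ are real you may pass to real parts of your identity, after which the classical maximum principle applies verbatim — the paper sidesteps this by writing its first-order term in the manifestly real form $\tilde g(d\psi,\beta_{\tilde\Omega}+\bar\beta_{\tilde\Omega})$.
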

\begin{proof}
Let  $f\in C^{\infty}(M, \R)$ be such that $\tilde \Omega=\mathrm{e}^f\Omega$. Applying $\partial_J$ to the Einstein equation, we obtain that
\begin{equation}\label{eq:Einstconf}
\partial_J (\lambda \mathrm{e}^{-f}) \wedge \tilde \Omega+\lambda \mathrm{e}^{-f}\partial_J \tilde \Omega=0\,.
\end{equation}
Let us now set $\psi=(\lambda \mathrm{e}^{-f})^{n-1}$. Then,  \eqref{eq:Einstconf} implies $\partial_J \psi \wedge \tilde \Omega^{n-1} +\psi \partial_J \tilde \Omega^{n-1}=0$. Applying $\partial$ to this identity,  we get
\[
\partial \partial_J \psi \wedge \tilde \Omega^{n-1}-\partial_J \psi \wedge \partial \tilde \Omega^{n-1}+\partial \psi \wedge \partial_J\tilde \Omega^{n-1}+\psi \partial \partial_J \tilde \Omega^{n-1}=0\,.
\]
Using the quaternionic Gauduchon condition and the definition of $\beta_{\tilde \Omega}$,  we obtain
\[
\Delta_{\tilde \Omega} \psi + \tilde g(d\psi, \beta_{\tilde \Omega}+\bar \beta_{\tilde \Omega})=0\,.
\]
Consequently, by the maximum principle, $\psi $ must be constant. Now also $\lambda \mathrm{e}^{-f}$ is a non-zero constant and \eqref{eq:Einstconf} reveals that $\tilde \Omega $ is HKT. Finally,  $\lambda$ cannot be negative otherwise we would have $c_1^{{\rm qBC}}(M, \mathsf H)<0$, which is impossible because $M$ is compact.
\end{proof}

\subsection{The HKT case}
   In view of  Subsection \ref{Subsec:1dim}, we will focus on dimension $n\ge2$. In this case, we can show the following.

\begin{prop}\label{Prop:HKTEinstein}
Let $ (M^n,\mathsf{H}, g) $ be a compact HKT-Einstein manifold with $n\ge 2$. Then, $s^{{\rm Ch}}$ is a non negative constant and equality holds if only if   $(M, \mathsf H)$ is ${\rm SL}(n, \H).$
\end{prop}
\begin{proof}
Let $\lambda$ be the Einstein factor of $\Omega$. Since $ \alpha $ and $ \Omega $ are $ \partial $-closed we have
\[
0=\partial \partial_J \alpha=\partial \lambda \wedge \Omega
\] 
therefore, for $ n\geq 2 $
\[
0=\partial \lambda \wedge \partial_J \lambda \wedge \Omega^{n-1}=\frac{1}{n}|\partial \lambda |^2 \Omega^n
\]
showing that $ \lambda $ is constant. Finally, since $c_1^{{\rm qBC}}(M, \mathsf{H})$ cannot be negative, then $\lambda\ge 0$. Furthermore, we have that
\[
\lambda= \frac{1}{\int_M \frac{\Omega^{n}\wedge \bar \Omega^{n}}{(n!)^2}}\int_M \lambda \frac{\Omega^{n}\wedge \bar \Omega^{n}}{(n!)^2}=\frac{1}{\int_M \frac{\Omega^{n}\wedge \bar \Omega^{n}}{(n!)^2}}\int_M \partial_J \alpha \wedge \frac{\Omega^{n-1}\wedge \bar \Omega^{n}}{(n!)^2}=\frac{1}{n}\frac{1}{\int_M \frac{\Omega^{n}\wedge \bar \Omega^{n}}{(n!)^2}}\int_M|\alpha |^2  \frac{\Omega^n\wedge \bar  \Omega^n}{(n!)^2}\,,
\]
so, clearly, $\lambda=0$ implies the $\mathrm{SL}(n,\H)$ condition. Conversely, if $(M,\mathsf{H}) $ is $\mathrm{SL}(n,\H)$ and $\lambda \neq 0$ then $\Omega$ would be globally $\partial \partial_J$-exact reaching a contradiction.
\end{proof}

\noindent It is fairly easy to come up with examples of compact hyperhermitian Einstein manifolds with $\lambda =0$ but not admitting compatible HKT metrics.  For instance,  any hypercomplex nilmanifold $M$ with a non-abelian hypercomplex structure $\mathsf{H}$,  by \cite[Theorem 4.6]{BDV},  does not have any compatible HKT metric. However,  since hypercomplex nilmanifolds are $ \mathrm{SL}(n,\H) $, any left-invariant hyperhermitian metric $ \Omega $ satisfies $\alpha=0$.

\medskip
Within our framework, the quaternionic Calabi conjecture formulated by Alesker and Verbitsky in \cite[Conjecture 1.5]{Alesker-Verbitsky (2010)} can be phrased as follows: let  $ (M,\mathsf{H},g) $ be a compact HKT manifold, for any  $ \Psi\in c_1^{\mathrm{qBC}}(M,\mathsf{H}) $, we can find $ \tilde{\Omega}\in[ \Omega]_{{\rm qBC}}$ such that $ \partial_J\alpha_{\tilde{\Omega}}=\Psi $. If this turns out to be true, when $ c_1^{\mathrm{qBC}}(M,\mathsf{H})=0 $ we would be able to find HKT-Einstein metrics with vanishing Einstein factor (cf. \cite{Verbitsky (2009)}), i.e. that are balanced and Chern-Ricci flat.\\
In a similar fashion, one is led to speculate on the existence of HKT-Einstein metrics on compact HKT manifolds with positive first quaternionic Bott-Chern class. More precisely, we wonder: 
\begin{quest}
Let $ (M^n,\mathsf{H}) $ be a compact hypercomplex manifold  such that $ c_1^{\mathrm{qBC}}(M,\mathsf{H})>0 $.  Does it always exist an HKT-Einstein metric $\tilde{\Omega}\in c_1^{{\rm qBC}}(M, \mathsf H) $? 
\end{quest}

\noindent Proceeding similarly to the K\"ahler case, the question turns out to be equivalent to the solvability of the following quaternionic Monge-Amp\`{e}re equation: let $\Omega\in c_1^{{\rm qBC}}(M, \mathsf H)$ be an HKT metric, 
\[
\left( \Omega+ \partial \partial_J \phi \right)^n=\mathrm{e}^{f-\phi} \Omega^n\,, \quad \Omega+ \partial \partial_J \phi>0\,,
\]
where $ \phi \in C^\infty(M,\R) $ is the unknown and $ f\in C^\infty(M,\R) $ is the datum. Unfortunately, this case is the quaternionic analogue of the Fano case in the K\"ahler setting, hence, when approaching the problem with the classical method of continuity, the same difficulties arise. Furthermore, it is natural to expect that certain obstructions may emerge in our context, similar to those found by Futaki \cite{Futaki} and Matsushima \cite{Matsushima}. It is extremely likely that more sophisticated tools are necessary to approach this problem. 

\subsection{Proof of Theorem \ref{Thm:main1.5}}\label{Subsec:Joyce}
In order to present the proof of  Theorem \ref{Thm:main1.5}, it will be useful to briefly overview the construction of  Joyce \cite{Joyce}. Let $G$ be a compact semisimple Lie group of rank $ r $ and fix a maximal torus $ H $ in $G$. Within this framework, structure theory can be performed, which allows to obtain a decomposition of  $ \mathfrak{g}={\rm Lie}(G) $  of the following form:
\begin{equation}\label{Joycedec}
\mathfrak{g}=\mathfrak{b}\oplus \bigoplus_{j=1}^m\mathfrak{d}_j\oplus \bigoplus_{j=1}^m\mathfrak{f}_j\,,
\end{equation}
where $ \mathfrak{b} $ is abelian of dimension $ r-m $, $ \mathfrak{d}_j\subseteq  \mathfrak{g} $ are  subalgebras isomorphic to $ \mathfrak{su}(2) $, and $ \mathfrak{f}_j\subseteq \mathfrak{g} $ are subspaces satisfying the following properties:
\begin{enumerate}[label=(J\arabic*),ref=J\arabic*]
\item\label{joyce1} $ [\mathfrak{d}_j,\mathfrak{b}]=0 $, for any $ j=1,\dots,m $,  and $ \mathfrak h :={\rm Lie}(H)\subseteq\mathfrak{b}\oplus \bigoplus_{j=1}^m\mathfrak{d}_j $;
\item\label{joyce2} $ [\mathfrak{d}_j,\mathfrak{d}_i]=0 $,  for $ j\neq i $;
\item\label{joyce3} $ [\mathfrak{d}_j,\mathfrak{f}_i]=0 $,  for $ j<i; $
\item\label{joyce4} $ [\mathfrak{d}_j,\mathfrak{f}_j]\subseteq \mathfrak{f}_j $, for any $ j=1,\dots,m $,  and this Lie bracket action is isomorphic to the direct sum of a finite amount of copies of the $ \mathfrak{su}(2) $-action on $ \C^2 $ by left matrix multiplication.
\end{enumerate}
Such a decomposition will be called a \emph{Joyce decomposition} of the Lie algebra $ \mathfrak{g}. $\\
Now, denote $ T^{2m-r}$ the $ (2m-r) $-dimensional torus, so that the Lie algebra of $ T^{2m-r}\times G $ decomposes as
\[
\tilde{\mathfrak g}:=(2m-r)\mathfrak{u}(1)\oplus\mathfrak{g} \simeq \R^m \oplus \bigoplus_{j=1}^m\mathfrak{d}_j\oplus \bigoplus_{j=1}^m\mathfrak{f}_j\,.
\] 
We define a hypercomplex structure $ I,J,K$ on $\tilde{\mathfrak{g}} $ in the following manner. For every $ j=1,\dots,m $, choose $ (e_1^j,e_2^j,e_3^j,e_4^j) $ a basis of $ \R \oplus \mathfrak{d}_j $ such that $ (e_1^1,e_1^2,\dots,e_1^m) $ is the standard basis of $ \R^m $ and $ e_2^j,e_3^j,e_4^j $ satisfy the commutation relations:
\begin{align}
\label{eq_basis_su2}
[e_2^j,e_3^j]=2e^j_4\,,&& [e_4^j,e^j_2]=2e^j_3\,,&& [e^j_3,e^j_4]=2e^j_2\,.
\end{align}
\begin{enumerate}
\item[(a)] For any $ j=1,\dots,m $, let $ I,J, K $ act on $ \R\oplus \mathfrak{d}_j $ as:
\begin{align*}
\quad \,\,\,  Ie^j_1=e^j_2\,, \quad Ie^j_3=e^j_4\,, \quad  Je^j_1=e^j_3\,, \quad Je^j_2=-e^j_4\,,
 \quad Ke^j_1=e^j_4\,, \quad Ke^j_2=e^j_3\,.
\end{align*}
We obviously further require $I^2=J^2=-\mathrm{Id}$.
\item[(b)] For any $ j=1,\dots,m $, let $ I,J, K$ act on $ \mathfrak{f}_j $ as:
\[
If=[e^j_2,f]\,, \quad Jf=[e^j_3,f]\,,
 \quad Kf=[e^j_4,f]\,,\quad f\in \mathfrak f_j\,.
\]
\end{enumerate}
Using \eqref{joyce4} and an argument due to Samelson \cite{Samelson (1953)}, one can see that   $ I,J,K$ induce a hypercomplex structure on $\tilde{ \mathfrak{g}}$, see \cite{Joyce}. Observe that different choices of the isomorphism $(2m-r)\mathfrak u(1)\oplus \mathfrak b\simeq \R^m$ lead to, possibly, non-equivalent hypercomplex structures on $\tilde{\mathfrak g}$, see \cite{Opfermann-Papadopoulos}.
 It was shown in \cite{Grantcharov-Poon (2000), Opfermann-Papadopoulos} that, for any group $G$, there exists at least a hypercomplex structure for which the opposite of the Cartan-Killing form $B$ on $G$ can be extended to  a hyperhermitian metric on $ T^{2m-r}\times G $, which, in turns, is strong HKT. On the other hand, we shall remark that, for some choices of $G$, there exists a family of Joyce's hypercomplex structures which are not compatible with any bi-invariant metric on  $T^{2m-r}\times G$, see \cite[p. 16]{Opfermann-Papadopoulos}. Furthermore, it is not even known whether invariant HKT metrics compatible with such hypercomplex structure exist.

We shall now  prove Theorem \ref{Thm:main1.5}. The latter provides a rather large class of positive HKT-Einstein manifolds and, as a by-product, proves the existence of invariant HKT metrics compatible with any Joyce hypercomplex structure.

\begin{proof}[Proof of Theorem \ref{Thm:main1.5}]
Let \eqref{Joycedec}
be a Joyce decomposition of $ \mathfrak{g} $ and let $g:=g_E \oplus (-B)$, where $g_E$ is the Euclidean metric on $(2m-r) \mathfrak{u}(1)$. Then, $g$ is bi-invariant on $\tilde{ \mathfrak{g}}$ and, even though $g$ is not hyperhermitian, the Joyce decomposition is $g$-orthogonal \cite[Lemma 2]{GP}. Denote $ d_j=\dim_\H(\mathfrak{f}_j) $. We choose a $g$-orthogonal basis $\mathcal B:=(e_1^1,\dots,e_4^1,\ldots, e_1^m,\dots,e_4^m,f^1_1,\dots,f^1_{4d_1},\ldots, f^m_1,\dots,f^m_{4d_m})$ of $\tilde{ \mathfrak{g}}$ such that: $ (e_{1}^j,\dots,e^j_4) $ is an orthogonal basis of $ \R\oplus \mathfrak{d}_j $ for any $ j=1,\dots,m $ with only non-zero commutators as in \eqref{eq_basis_su2} while $\{ f^j_1,\dots,f^j_{4d_j}\} $ is  any orthonormal basis of $\mathfrak f_j$ such that, for $ k=1,\dots,d_j $,
$$ f^j_{4k-2}=If^j_{4k-3}\,,\quad  f^j_{4k-1}=Jf^j_{4k-3}\,, \quad  f^j_{4k}=Kf^j_{4k-3} \,.
$$ This last choice is possible since $g$ is hyperhermitian on $\mathfrak f_j$, for any $j=1, \ldots, m$.  We will also denote $(e_1,\dots,e_{4n})$ the $g$-orthonormal basis obtained from $\mathcal B$ by rescaling the vectors to unit $g$-norms. 
By \cite[Proposition 4.1]{ Vezzoni}, the form
$
\Ric^{\mathrm{Ch}}_{\omega_I}-J\Ric^{\mathrm{Ch}}_{\omega_I}$
is independent of the left-invariant hyperhermitian metric $\omega_I$. 
Then, in view of Lemma \ref{lem:einstein1} (\ref{einst3}), it is enough to show that
$$
\frac{\Ric^{\mathrm{Ch}}_{\omega_I}-J\Ric^{\mathrm{Ch}}_{\omega_I}}{2}(X,IX)>0\,,  \quad X\in \tilde{ \mathfrak{g}}\,, \quad X\ne 0 \,.
$$
Using  \cite[Proposition 4.1]{ Vezzoni}, we have that 
\begin{equation}\label{eq:Jric}
\begin{aligned}
\frac{\Ric^{\mathrm{Ch}}_{\omega_I}-J\Ric^{\mathrm{Ch}}_{\omega_I}}{2}(X,IX)=&\, -\frac{1}{4}\mathrm{tr}(\mathrm{ad}_{[X,IX]}I)-\frac{1}{4}\mathrm{tr}(I\mathrm{ad}_{[JX,KX]})\\
=&\, \frac14\sum_{i=1}^{4n} g([X,IX]+[JX,KX],[e_i, Ie_i])\,.
\end{aligned}
\end{equation}

Thus, we need to compute $\sum_{i=1}^{4n}[e_i,Ie_i]$.  By  \eqref{eq_basis_su2} and bi-invariance, for all $j=1,\dots,m$, we  can see that $\lambda_j=:\| e_2^j\|_g=\|e_3^j\|_g=\|e_4^j\|_g$. Hence,  we obtain 
\[
\sum_{i=1}^{4m}[e_i,Ie_i]=\sum_{j=1}^m\left(\frac{1}{\|e_1^j\|^2_g}[e_1^j,Ie_1^j]+\frac{1}{\lambda_j^2}[e_2^j,Ie_2^j]+\frac{1}{\lambda_j^2}[e_3^j,Ie_3^j]+\frac{1}{\lambda_j^2}[e_4^j,Ie_4^j]\right)=\sum_{j=1}^m\frac{4}{\lambda_j^2} e_2^j\,.
\]
It remains to compute $\sum_{i=4m+1}^{4n}[e_i,Ie_i]=\sum_{j=1}^m \sum_{k=1}^{4d_j}[f^j_k,If^j_k]$. To do this, we will first prove the claim that, for any $ j=1,\dots,m $, 
\begin{equation}\label{eq:in_Joyce}
\sum_{i=1}^{4d_j} [f_i^j,If_i^j]=\frac{4d_j}{\lambda_j^2} e^j_2\,.
\end{equation}
Indeed, we observe that
\[
\sum_{r=0}^3[f^j_{4k-r},If^j_{4k-r}]=2\left([f^j_{4k-3},f^j_{4k-2}]+[f^j_{4k-1},f^j_{4k}]  \right).
\]
By definition of $ (I,J,K) $ on $ \mathfrak{f}_j $ and using Jacobi's identity, we have
\[
\begin{split}
[f^j_{4k-1},f^j_{4k}]&=[f^j_{4k-1},Kf^j_{4k-3}]=[[f^j_{4k-1},e^j_4],f^j_{4k-3}]+[e^j_4,[f^j_{4k-1},f^j_{4k-3}]]\\
&
=-[Kf^j_{4k-1},f^j_{4k-3}]+[e^j_4,[f^j_{4k-1},f^j_{4k-3}]]
=[f^j_{4k-2},f^j_{4k-3}]+[e^j_4,[f^j_{4k-1},f^j_{4k-3}]]\,,
\end{split}
\]
hence
\[
\sum_{i=1}^{4d_j} [f_i^j,If_i^j]=2\sum_{k=1}^{d_j} [e^j_4,[f^j_{4k-1},f^j_{4k-3}]]\,.
\]
Now,  we fix $ k\in\{1, \ldots, d_j\} $ and consider the components of $ [f^j_{4k-1},f^j_{4k-3}] $ with respect to Joyce's decomposition:
\[
[f^j_{4k-1},f^j_{4k-3}]=\sum_{p=1}^m \left( D_1^pe^p_1+D^p_2e^p_2+D^p_3e^p_3+D^p_4e^p_4+\sum_{q=1}^{4d_p}F^p_qf^p_q\ \right) .
\]
Then, by, respectively,  \eqref{joyce1},  \eqref{joyce2} and \eqref{joyce3},  we have that 
 $ [e^j_4,e^p_1]=0 $, for $ p=1,\dots,m $,   $ [e^j_4,e^p_q]=0 $, for $ p\neq j $ and $ q\in \{2,3,4 \} $ and $ [e^j_4,f^p_q]=0 $, for $ p>j $ and $ q\in \{1,\dots,4d_p\} $.  Moreover, we have
\[
\begin{split}
D^j_2&=\frac{1}{\|e_2^j\|^2_g}g([f^j_{4k-1},f^j_{4k-3}],e^j_2)=\frac{1}{\lambda_j^2}g(f^j_{4k-1},[f^j_{4k-3},e^j_2])
=-\frac{1}{\lambda_j^2}g(f^j_{4k-1},If^j_{4k-3})=0\,,
\end{split}
\]
while, proceeding similarly, we have  $D^j_3=-\frac{1}{\lambda_j^2}$ and $D_4^j=0$.
Furthermore,  for $ p<j $ and $ q\in \{1,\dots,4d_p\} $, we have 
\[
\begin{split}
F^p_q&=- g([f^j_{4k-2},f^j_{4k-3}],I^2f^p_q)
=-  g([[f^j_{4k-2},f^j_{4k-3}],e^p_2],If^p_q)
\\
&=- g([[f^j_{4k-2},e^p_2],f^j_{4k-3}]+[f^j_{4k-2},[f^j_{4k-3},e^p_2]],If^p_q)=0\,,
\end{split}
\]
where in the last equality we used  \eqref{joyce3}.  For $p=j$, as above, we have that 
\[
\begin{split}
F^j_q&=-g([[f^j_{4k-2},e^j_2],f^j_{4k-3}]+[f^j_{4k-2},[f^j_{4k-3},e^j_2]],If^j_q)\\
&=g([If^j_{4k-2},f^j_{4k-3}]+[f^j_{4k-2},If^j_{4k-3}],If^j_q)\\
&=g(-[f^j_{4k-3},f^j_{4k-3}]+[f^j_{4k-2},f^j_{4k-2}],If^j_q)=0\,.
\end{split}
\]
Then, putting everything together we infer
\[
[e^j_4,[f^j_{4k-1},f^j_{4k-3}]]=- \frac{1}{\lambda_j^2}[e^j_4,e^j_3]=\frac{2}{\lambda_j^2} e^j_2
\]
and summing up over $ k $ we obtain \eqref{eq:in_Joyce}, as claimed. Therefore
\[
\sum_{i=1}^{4n}[e_i,Ie_i]=4  \sum_{j=1}^m \frac{1+d_j}{\lambda_j^2} e^j_2\,,
\]
and, hence, \eqref{eq:Jric} becomes
$$
\frac{\Ric^{\mathrm{Ch}}_{\omega_I}-J\Ric^{\mathrm{Ch}}_{\omega_I}}{2}(X,IX)=\sum_{j=1}^m \frac{1+d_j}{\lambda_j^2} g([X,IX]+[JX,KX],e_2^j)\,.
$$
 We will prove that $g([X,IX]+[JX,KX],e_2^j)\ge 0$, for any $X\in \tilde{ \mathfrak{g}}$. To do so,   we rewrite it as
\begin{equation}\label{eqn:compoofJ-invric}
\begin{split}
g([X,IX]+[JX,KX],e_2^j)&=g(IX,[e_2^j,X])+g(KX, [e_2^j,JX])
\\
&=\sum_{i=1}^{4n}g(X,e_i)g(IX,[e_2^j,e_i])+\sum_{i=1}^{4n}g(JX,e_i)g(KX,[e_2^j,e_i])\,.
\end{split}
\end{equation}
If $e_i\in (2m-r)\mathfrak{u}(1)\oplus \mathfrak{b} \oplus \bigoplus_{k\neq j} \mathfrak{d}_k \oplus \langle e_2^j \rangle \oplus \bigoplus_{k>j}\mathfrak{f}_k $,  then $[e_2^j,e_i]=0$. On the other hand, if $e_i=\frac{e_3^j}{\lambda_j}$, then, using that $g$ is $I$-Hermitian along $e_3^j$ and $e_4^j$, 
\[
\begin{split}
g(X,e_i)g(IX,[e_2^j,e_i])+g(JX,e_i)g(KX,[e_2^j,e_i])&=
\frac{2}{\lambda_j^2}\left( g(X,e_3^j)^2+g(JX,e_3^j)^2\right)\,,
\end{split}
\]
Similarly, for $e_i=\frac{e_4^j}{\lambda_j}$, we obtain
\[
g(X,e_i)g(IX,[e_2^j,e_i])+g(JX,e_i)g(KX,[e_2^j,e_i])=\frac{2}{\lambda_j^2}\left( g(X,e_4^j)^2+g(JX,e_4^j)^2\right)\,.
\]
If $e_i\in \mathfrak{f}_k$ with $k<j$ then $[e_2^j,e_i],[e_2^j,Je_i]\in \mathfrak{f}_k$, see \cite[Lemma 3.1]{BFGV2}, and thus, denoting $X_{\mathfrak{f}_k}$ the component of $X$ along $\mathfrak{f}_k$, we get
\[
\begin{split}
\sum_{e_i\in\mathfrak{f}_k}g(X,e_i)&g(IX,[e_2^j,e_i])+g(JX,e_i)g(KX,[e_2^j,e_i])\\
& =\sum_{e_i\in\mathfrak{f}_k}g(X_{\mathfrak{f}_k},[e_3^k,Je_i])g(IX_{\mathfrak{f}_k},[e_2^j,[e_3^k,Je_i]])-g(X_{\mathfrak{f}_k},[e_3^k,e_i])g(IX_{\mathfrak{f}_k},[e_3^k,[e_2^j,e_i]])\\
& =\sum_{e_i\in\mathfrak{f}_k}g(X_{\mathfrak{f}_k},[e_3^k,Je_i])g(IX_{\mathfrak{f}_k},[e_2^j,[e_3^k,Je_i]])-g(X_{\mathfrak{f}_k},[e_3^k,e_i])g(IX_{\mathfrak{f}_k},[e_2^j,[e_3^k,e_i]])=0\,,
\end{split}
\]
where the last equality is due to Jacobi's identity. Finally, if $e_i\in \mathfrak{f}_j$,  we have
\[
\begin{split}
g(X,e_i)&g(IX,[e_2^j,e_i])+g(JX,e_i)g(KX,[e_2^j,e_i])
=g(X,e_i)^2+g(X,Je_i)^2\,,
\end{split}
\] thanks to the fact that $g|_{\mathfrak f_j}$ is hyperhermitian.
Putting all these computations together, we have  that \eqref{eqn:compoofJ-invric} rewrites as
\[
\begin{split}
g([X,IX]+[JX,KX],e_2^j)&=\frac{2}{\lambda_j^2}\sum_{l=3,4}g(X,e_l^j)^2+g(JX,e_l^j)^2
+2\sum_{k=1}^{d_j}\sum_{r=0}^3g(X,f_{4k-r}^j)^2\ge0\,,\\
\end{split}
\] for any $X\in\tilde{ \mathfrak{g}}$.  Now, summing over $j=1,\ldots, m$,  after noticing that $\frac{\Ric^{\mathrm{Ch}}_{\omega_I}-J\Ric^{\mathrm{Ch}}_{\omega_I}}{2}(X,IX)=0$ if and only if $X=0$, we conclude the proof.
\end{proof}

\section{Examples and constructions}\label{Sec:Ex}

In this section,  we collect several examples and two interesting constructions, one inspired by Arroyo and Nicolini \cite[Section 5]{AN}, the other due to Barberis and Fino \cite{Barberis-Fino}. The purpose of the  section is to exhibit examples admitting one type of  metrics listed in Definition \ref{def:specialmetrics} but none of the one immediately stronger. Finally, Subsection \ref{subeinsteinnoncpt} is devoted to present some non-compact HKT-Einstein manifolds, two of which have negative Einstein factor.

\smallskip
In the spirit of searching special metrics, it will be useful to note that the well-known symmetrisation technique on a compact quotient $M$ of a simply connected Lie group $G$ by a lattice $\Gamma$ endowed with a left-invariant hypercomplex structure $\mathsf H$ allows to consider invariant ones. 
Indeed, let $ \Omega $ be a hyperhermitian metric on $ (M,\mathsf{H}) $. Since the symmetrisation map $ \mu $ commutes with $ I,J $ and $ d $ it also commutes with $ \partial $ and $ \partial_J $. Moreover, if  $ \mu(\Omega^{n-1}) $ is q-positive, then by Lemma \ref{Lem:bijection} there exists an invariant hyperhermitian metric $ \tilde \Omega $ such that $ \tilde \Omega^{n-1}=\mu(\Omega^{n-1}) $. From this point it is straightforward to conclude that if $ (M,\mathsf{H}) $ admits a metric which is quaternionic balanced, quaternionic strongly Gauduchon or quaternionic Gauduchon then it admits an invariant one. Moreover, using the symmetrisation, from \cite[Proposition 16]{GLV} and Lemma \ref{Lem:lemma6.1} we deduce:

\begin{lem}
Let $(M^n,\mathsf{H})$ be an $\mathrm{SL}(n,\H)$ compact quotient of a Lie group by a lattice such that the hypercomplex structure $ \mathsf{H} $ is left-invariant, then any invariant hyperhermitian metric is quaternionic Gauduchon, moreover it is quaternionic balanced if and only if it is balanced.
\end{lem}

Motivated by the facts above, in the next subsections, we will work on Lie algebras endowed with a hypercomplex structure. So, we will always fix a coframe $\{e^1,\ldots, e^{4n}\}$  for the Lie algebra and,  unless otherwise stated, use the following hypercomplex structure: for all $ k=1,\ldots, n $,
\[
Ie^{4k-3}=-e^{4k-2}\,, \quad Ie^{4k-1}=-e^{4k}\,, \quad Je^{4k-3}=-e^{4k-1}\,, \quad Je^{4k-2}=e^{4k}\,.
\]
In view of this, the $(1,0)$-coframe $\{\zeta^1,\ldots, \zeta^{2n}\}$ with respect to $I$ will be $\zeta^j=e^{2j-1}+\sqrt{-1}e^{2j}$, for all $j=1, \ldots, 2n$,  and $J$ will act on it as $J\zeta^{2i-1}=-\bar \zeta^{2i}$, for all  $i=1, \ldots, n$. As customary, we will denote $\zeta^{ij}:=\zeta^i\wedge \zeta^j$ and $\zeta^{i\bar j }:=\zeta^i\wedge \bar \zeta^j.$

\subsection{Quaternionic balanced manifolds without HKT metrics}\label{Ex:quat_bal}
An example of a quaternionic balanced manifold not admitting any HKT metric can be found, for instance, in \cite{Fino-Grantcharov-Verbitsky}. 
We provide another  $3$-dimensional example.

\begin{es}\label{ex:Lejmi-Weber}
We consider the hypercomplex nilpotent Lie algebra in \cite[Example 3]{Lejmi-Weber} with structure equations:
\[
d \zeta^i=0\,, \quad \mbox{for all }i=1,\ldots, 4\,, \quad d \zeta^5=\frac{1}{2}(\zeta^{13}+ \zeta^{\bar 13})\,, \quad d \zeta^6=\frac{1}{2}(\zeta^{14}+ \zeta^{\bar 14})\,.
\]
Since the hypercomplex structure is not abelian, by \cite[Theorem 4.6]{BDV} the nilmanifold $ N $ does not admit any HKT metric.  On the other hand the invariant hyperhermitian metric on $ N $ that makes the above coframe unitary is quaternionic balanced.\\
Observe that this example does not satisfy the $ \partial \partial_J $-Lemma because of \cite[Theorem 5]{Lejmi-Tardini} and the observation of Lejmi and Weber \cite[Example 3]{Lejmi-Weber} that it is not $ C^\infty $-pure nor $ C^\infty $-full.
\end{es}

\subsection{Quaternionic strongly Gauduchon manifolds without quaternionic balanced metrics} \label{qsGnoqb}
Notice that the minimum quaternionic dimension for which it is possible to provide such an example as a nilmanifold is $ 3 $, indeed all hypercomplex nilmanifolds are $ \mathrm{SL}(n,\H) $, see \cite[Corollary 3.3]{BDV}. Therefore, in real dimension $ 8 $, they admit a quaternionic strongly Gauduchon metric if and only if they admit an HKT metric by \cite[Theorem 10.1]{Lejmi-Weber}.

\begin{es}\label{es:qsGnoqB}
Consider the hypercomplex nilpotent Lie algebra with structure equations:
\[
d\zeta^i=0\,, \quad \text{for all }i=1,\dots,4\,,
\quad
d\zeta^5=\frac{1}{2}(\zeta^{12}+ \zeta^{\bar 12}-\zeta^{4\bar 4})\,, \quad d\zeta^6=\frac{1}{2}(\zeta^{34}+ \zeta^{\bar 34}+\zeta^{2\bar 2})\,.
\]
By straightforward computations, one can see that the invariant hyperhermitian metric $ \Omega $ that renders the coframe unitary is quaternionic strongly Gauduchon as
\[
\begin{aligned}
\partial \Omega^2&=2\partial_J(\zeta^{3456}-\zeta^{1256})\,.
\end{aligned}
\]
Since the structure constants are rational, the Lie algebra admits a lattice and the metric above descends to the compact quotient $ N $. On the other hand,  we claim that there are no quaternionic balanced metrics on the nilmanifold $ N $. Indeed, the form $ \zeta^{12}=2\partial \zeta^5 $ induces a q-positive, $ \partial $-exact, $ (2,2n) $-current $ T $ given by integration
\[
T(\gamma):=\int_N \zeta^{12} \wedge \gamma\wedge \bar \Theta \,, \quad \gamma\in \Lambda_I^{2n-2,0}N\,,
\]
where $ \Theta $ is any holomorphic volume form in $ \Lambda^{2n,0}_IN $. We conclude applying Proposition \ref{qPaulcurrent}.
\end{es}

\begin{rmk}
Both the quaternionic balanced condition and the strongly Gauduchon condition are not preserved under small deformations of the hypercomplex structure, see  \cite{Fino-Grantcharov}. 
It would be interesting to know if the same phenomenon occurs for the quaternionic Gauduchon condition.
\end{rmk}

\subsection{Quaternionic Gauduchon manifolds without quaternionic strongly Gauduchon metrics}
\label{Sec:qGnoqsG}
We already mentioned that any $ 8 $-dimensional hypercomplex nilmanifold with non-abelian hypercomplex structure admits no quaternionic strongly Gauduchon metric. On the other hand it admits a quaternionic Gauduchon metric by \cite[Proposition 16]{GLV}. Notice that, as a consequence of Lemma \ref{Lem:deldelJlemma}, no example in this class can satisfy the $ \partial \partial_J $-Lemma.\\
Here we present an example in each dimension.

\begin{es}
For $ n\geq 2 $,  consider the $ n $-dimensional hypercomplex nilpotent Lie algebra with structure equations:
\[
d \zeta^i=0\,, \quad i=1,\ldots, {2n-2}\,,
\quad
d \zeta^{2n-1}=-\frac{1}{2}\sum_{k=1}^{n-1} \zeta^{2k-1,  \overline{2k-1}}\,, \quad d \zeta^{2n}=\frac{1}{2}\sum_{k=1}^{n-1}(\zeta^{2k-1,  2k}+ \zeta^{\overline{2k-1}, 2k})\,.
\]
We shall check that the generic invariant hyperhermitian metric
\[
\Omega= A_{ij}\zeta^{ij}\,, \quad A_{ij}\in \C\,,
\]
is not quaternionic strongly Gauduchon. We easily see that
\[
\begin{split}
\partial \Omega^{n-1}
&=-\frac{(n-1)!}{2}\left(\sum_{k=1}^{n-1} \pf(A(2k-1,2k)) \right) \zeta^{1 \ldots 2n-1}\,,
\end{split}
\]
where  $ A(i,j) $ denotes the matrix obtained from the skew-symmetric matrix $ A=(A_{rs}) $ removing the $ i $\textsuperscript{th} and $ j $\textsuperscript{th} rows and columns. Then $ \partial \Omega^{n-1} $ is non-zero, because by q-positivity of $ \Omega $ we must have $ \pf(A(2k-1,2k)) >0 $, for all $ k=1,\dots,n $.  Moreover,  it is not $ \partial_J $-exact since
\[
\partial_J \zeta^i=0\,, \quad i\neq 2n-1\,, \quad \partial_J \zeta^{2n-1}=-\frac{1}{2}\sum_{k=1}^{n-1}\zeta^{2k-1,  2k}\,.
\]
On the other hand, these are all nilpotent Lie algebras with rational structure constants and any hypercomplex nilmanifold admits quaternionic Gauduchon metrics by  \cite[Proposition 16]{GLV}.
\end{es}

\subsection{A hypercomplex manifold without quaternionic Gauduchon metrics}
\label{Ex:Andrada-Tolcachier}

The example we describe appeared in \cite[Example 6.3]{AT}. 
It is constructed as a compact quotient $ S=\Gamma \backslash G $ of a solvable Lie group by a lattice $ \Gamma $ equipped with an invariant hypercomplex structure $ \mathsf{H} $. Andrada and Tolcachier show that the canonical bundle of $ (S,I) $ is holomorphically trivial but that of $ (S,J) $ is not, indeed, they even show that $ c_1^{\mathrm{BC}}(S,J)\neq 0 $. On the other hand, the holomorphic triviality of $ K(S,I) $ implies $ c_1^{\mathrm{qBC}}(S, \mathsf{H})=0 $ (cf. Remark \ref{rmk:implications}). We then conclude that a compatible quaternionic Gauduchon metric cannot exists from Theorem \ref{Thm:main1.2}.

\begin{rmk}\label{rmk:kodAT}
This example, together with Proposition \ref{Prop:kod} also shows that $ \kappa(S,I)=0 $ while $ \kappa(S,J)=-\infty $, so, in general, different complex structures in the same hypercomplex structure may yield different Kodaira dimensions. From this,  it also follows that the complex deformation
\[
I_t=\cos(\pi t)I + \sin(\pi t) J\,, \quad t\in [0,1/2]\,,
\]
does not preserve the Kodaira dimension, a fact that is in contrast with the projective case as shown by Siu \cite{Siu}. We are grateful to G. Grantcharov for this observation.
\end{rmk}

\subsection{Dependence of the quaternionic strongly Gauduchon condition from the complex structures}\label{Subsec:qsG}

Example \ref{es:qsGnoqB} can be used to show that the quaternionic strongly Gauduchon condition depends on the choice of the pair of anticommuting complex structures in the same hypercomplex structure. Indeed, let us show that with respect to the pair $ (J,I) $ there are no quaternionic strongly Gauduchon metrics. Consider the $ (1,0) $-coframe with respect to $ J $ given by $ w^{2k-1}=e^{4k-3}+\sqrt{-1}e^{4k-1} $ and $ w^{2k}=e^{4k-2}-\sqrt{-1}e^{4k} $ for $ k=1,2,3 $, and let $ d=\partial+\bar \partial $ be the splitting of the exterior differential with respect to $ J $. Then
\begin{align*}
\partial w^i&=0\,, \quad i=1,\dots,5\,, & \partial w^6&=\frac{1}{2}(\sqrt{-1}w^{12}+w^{34})\,,\\
I^{-1}\bar \partial I w^i&=0\,, \quad i\neq 5\,, & I^{-1}\bar \partial I w^5&=\frac{1}{2}(\sqrt{-1}w^{12}-w^{34})\,.
\end{align*}
It is now  easy to check that the generic invariant hyperhermitian metric $ \Omega $ cannot be quaternionic strongly Gauduchon, since $ \partial \Omega^2 $ is proportional to $ w^{12345}$ which is never $ I^{-1}\bar \partial I $-exact.

\subsection{Arroyo-Nicolini's construction}\label{Subsec:AN}
In \cite[Section 5]{AN}, Arroyo and Nicolini give a procedure to construct nilpotent SKT Lie algebras starting from previous ones. Here we adapt their argument to build new hyperhermitian nilpotent Lie algebras.

Let $ (\mathfrak{g}_1,[\cdot,\cdot]_1,\mathsf{H}_1) $ and $ (\mathfrak{g}_2,[\cdot,\cdot]_2,\mathsf{H}_2) $ be two nilpotent Lie algebras equipped with a hypercomplex structure generated by $ (I_1,J_1) $ and $ (I_2,J_2) $ respectively. Provided  we can choose $ e_i\in \mathfrak z(\mathfrak{g}_i)\setminus [\mathfrak{g}_i,\mathfrak{g}_i] $, where $ \mathfrak z(\mathfrak{g}_i) $ is the centre of $ \mathfrak{g}_i $. Then, we can define the new Lie algebra $ \mathfrak{g}=\mathfrak{g}_1 \oplus \mathfrak{g}_2 \oplus \langle X,Y,Z,W \rangle $ with Lie bracket
\[
[\cdot,\cdot]\vert_{\mathfrak{g}_i\times \mathfrak{g}_i}:=[\cdot,\cdot]_i\,, \quad [X,Y]=-[Z,W]:=e_1+e_2
\]
and hypercomplex structure $ \mathsf{H} $ determined by:
\begin{align*}
	& I\vert_{\mathfrak{g}_i}=I_i\,, && IX=Y\,, && IZ=W\,,\\
	& J\vert_{\mathfrak{g}_i}=J_i\,, && JX=Z\,, && JY=-W\,.
\end{align*}
We note that $ \mathfrak{g}_2 $ and  that, if $ \mathfrak{g}_1 $ and $ \mathfrak{g}_2 $ have rational structure constants so does $ \mathfrak{g} $, therefore we can find lattices.

\begin{thm}\label{Thm:AN}
The hypercomplex Lie algebra $ (\mathfrak{g},\mathsf{H}) $ admits HKT (resp. quaternionic balanced, quaternionic strongly Gauduchon) metrics if and only if $ (\mathfrak{g}_1,\mathsf{H}_1) $ and $ (\mathfrak{g}_2,\mathsf{H}_2) $ do.
\end{thm}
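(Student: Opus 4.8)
The backbone of the argument is a clean description of the operators $\partial$ and $\partial_J$ on $\mathfrak g$. Writing $x,y,z,w$ for the duals of $X,Y,Z,W$ and choosing each $e_i^*$ to vanish on $[\mathfrak g_i,\mathfrak g_i]$ (possible precisely because $e_i\notin[\mathfrak g_i,\mathfrak g_i]$, so that $d_ie_i^*=0$), the structure equations give $dx=dy=dz=dw=0$ and, since $e_i$ is central, the only new contribution to the differential is
\[
de_1^*=de_2^*=-x\wedge y+z\wedge w=:\rho\,.
\]
Setting $\sigma=x+\sqrt{-1}y$ and $\tau=z+\sqrt{-1}w$, one checks that $\{\sigma,\tau\}$ completes the $(1,0)$-coframes of $\mathfrak g_1,\mathfrak g_2$ to a $(1,0)$-coframe of $(\mathfrak g,I)$, with $J\sigma=-\bar\tau$, and that $\rho=-\tfrac{\sqrt{-1}}{2}\sigma\wedge\bar\sigma+\tfrac{\sqrt{-1}}{2}\tau\wedge\bar\tau$ is of type $(1,1)$. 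The crucial consequence is that on holomorphic forms the coupling $\rho$ is invisible: for $\psi\in\Lambda^{p,0}\mathfrak g_i^*$ one has $\partial\psi=\partial_i\psi$ and $\partial_J\psi=\partial_{J_i}\psi$, while $\partial\sigma=\partial\tau=\partial_J\sigma=\partial_J\tau=0$. In short, on $(\ast,0)$-forms $\partial$ and $\partial_J$ split as $\partial_1+\partial_2$ and $\partial_{J_1}+\partial_{J_2}$, the new directions being closed for both.

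For the ``if'' direction I would take the block metric $\Omega=\Omega_1+\Omega_2+\sigma\wedge\tau$. Expanding $\Omega^{n-1}$ and using $(\sigma\wedge\tau)^2=0$, together with the vanishing of the top powers $\partial\Omega_i^{n_i}=0$ (forced by degree) and $\partial(\sigma\wedge\tau)=\partial_J(\sigma\wedge\tau)=0$, every term of $\partial\Omega^{n-1}$ is a multiple of $(\partial_i\Omega_i^{n_i-1})\wedge(\text{top power of the other factor})\wedge\sigma\wedge\tau$; for HKT one computes instead $\partial\Omega=\partial_1\Omega_1+\partial_2\Omega_2$. Hence $\Omega$ is HKT, quaternionic balanced, or quaternionic strongly Gauduchon as soon as the $\Omega_i$ are. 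In the qsG case the factor primitives $\partial_i\Omega_i^{n_i-1}=\partial_{J_i}\eta_i$ reassemble, again because $\partial_J$ annihilates the remaining top power and $\sigma\wedge\tau$, into $\partial_J(\eta_i\wedge\cdots\wedge\sigma\wedge\tau)$.

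The ``only if'' direction is the delicate one, since an invariant hyperhermitian metric on $\mathfrak g$ need not be block-diagonal. For HKT this causes no trouble: $\mathfrak g_i\subset\mathfrak g$ is a hypercomplex subalgebra with $I$-holomorphic inclusion, so the restriction $\Omega|_{\mathfrak g_i}$ is a q-real, q-positive $(2,0)$-form with $\partial_i(\Omega|_{\mathfrak g_i})=(\partial\Omega)|_{\mathfrak g_i}=0$, i.e.\ HKT. For the quaternionic balanced and quaternionic strongly Gauduchon cases restriction is useless (the relevant power pulls back to zero), so I would first reduce to block form by exploiting the hypercomplex involution $\theta$ acting as $+1$ on $\mathfrak g_1\oplus\mathfrak g_2$ and as $-1$ on $\langle X,Y,Z,W\rangle$; this is an automorphism precisely because $[X,Y]=-[Z,W]=e_1+e_2$ is $\theta$-fixed. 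Averaging over $\{1,\theta\}$ kills all terms with an odd number of legs in the new directions. Since the conditions $\partial\Phi=0$ and ``$\partial\Phi$ is $\partial_J$-exact'' are linear and the cone of q-positive $(2n-2,0)$-forms is convex, the average must be taken on the power $\Phi=\Omega^{n-1}$ and the metric recovered through the bijection of Lemma~\ref{Lem:bijection}. One obtains a $\theta$-invariant metric $\Omega=\Omega_{12}+c\,\sigma\wedge\tau$ with $c>0$, where $\Omega_{12}$ is a hyperhermitian metric on the product $\mathfrak g_1\oplus\mathfrak g_2$; as $\Omega_{12}^{n-1}$ is a top holomorphic form there, it is $\partial$-closed, whence $\partial\Omega^{n-1}=(n-1)c\,(\partial\Omega_{12}^{n-2})\wedge\sigma\wedge\tau$, and matching $\sigma\wedge\tau$-components shows that $\Omega_{12}$ satisfies the corresponding condition on $\mathfrak g_1\oplus\mathfrak g_2$ exactly when $\Omega$ does on $\mathfrak g$.

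It then remains to descend from the product to its factors. Here I would fibre-integrate $\Omega_{12}^{n-2}$ along the projections $\pi_i\colon\mathfrak g_1\oplus\mathfrak g_2\to\mathfrak g_i$, exactly as in the hyperhermitian submersion statement proved earlier in this section: the pushed-forward form is q-real, q-positive and $\partial_i$-closed, hence the $(n_i-1)$-st power of a quaternionic balanced metric on $\mathfrak g_i$, and the identical computation with $\partial_J$ in place of $\partial$—legitimate because fibre integration commutes with both $\partial$ and $\partial_J$ for a holomorphic submersion—settles the qsG case. The main obstacle throughout is exactly this final step: neither $\theta$ nor any admissible automorphism can separate $\mathfrak g_1$ from $\mathfrak g_2$ (the gluing $e_1+e_2$ forbids scaling the two factors independently), so the cross terms survive the averaging and the descent to the individual factors must genuinely pass through fibre integration rather than restriction; keeping track of $\partial_J$-exactness in the quaternionic strongly Gauduchon case is where the bookkeeping requires most care.
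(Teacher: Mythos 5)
Your argument is correct in substance, but it follows a genuinely different route from the paper's, and in fact it proves more. The paper's proof has two ingredients: for HKT it quotes \cite[Theorem 4.6]{BDV} --- a nilpotent hypercomplex Lie algebra admits an HKT metric precisely when the hypercomplex structure is abelian --- together with the observation that $\mathsf{H}$ is abelian if and only if $\mathsf{H}_1$ and $\mathsf{H}_2$ are; for the quaternionic balanced and quaternionic strongly Gauduchon conditions it performs only your ``if''-direction computation, namely that the block metric $\Omega=\Omega_1+\Omega_2+\zeta^1\wedge\zeta^2$ is special if and only if $\Omega_1$ and $\Omega_2$ are, with no discussion of compatible metrics that are not of block form. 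Your proposal differs at both points. The restriction argument for HKT (pulling back $\partial\Omega=0$ along the hypercomplex, $I$-holomorphic inclusions $\mathfrak{g}_i\hookrightarrow\mathfrak{g}$) is more elementary than the appeal to \cite{BDV} and uses no nilpotency. More importantly, the $\theta$-averaging of $\Omega^{n-1}$ through Lemma \ref{Lem:bijection}, followed by the reduction to the product $\mathfrak{g}_1\oplus\mathfrak{g}_2$ and the descent to its factors, supplies exactly the step that the paper leaves implicit and that the ``only if'' direction (equivalently, the contrapositive used to build algebras \emph{not} admitting special metrics) genuinely requires; your observation that no automorphism can separate the two factors because of the gluing vector $e_1+e_2$ correctly identifies why this extra work is unavoidable.

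The one step you should tighten is the final descent. The projections $\pi_i\colon\mathfrak{g}_1\oplus\mathfrak{g}_2\to\mathfrak{g}_i$ are not proper at the level of simply connected groups, so the paper's hyperhermitian-submersion proposition cannot be invoked as a black box; moreover, honest fibre integration would lower the degree of $\Omega_{12}^{n-2}$ by $4n_2$, which is never the degree you want (it is negative as soon as $n_2\geq n_1$). What you need --- and what the proof of that proposition actually performs --- is the extraction of the component of $\Omega_{12}^{n-2}$ lying in $\Lambda^{2n_1-2,0}\mathfrak{g}_1^*\otimes\Lambda^{2n_2,0}\mathfrak{g}_2^*$ against a generator $\Theta_2$ of the one-dimensional space $\Lambda^{2n_2,0}\mathfrak{g}_2^*$. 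For invariant forms this is pure linear algebra, and it commutes with $\partial$ and $\partial_J$ precisely because $\partial_2$ and $\partial_{J_2}$ vanish on $\Lambda^{2n_2-1,0}\mathfrak{g}_2^*$; this vanishing is where nilpotency re-enters your argument: $\mathfrak{g}_2$ is unimodular and carries a closed non-zero $(2n_2,0)$-form, so no $(2n_2-1,0)$-form can have non-zero $\partial_2$- or $\partial_{J_2}$-image in $\Lambda^{2n_2,0}\mathfrak{g}_2^*$. With this fix, the q-positivity and $\partial_1$-closedness (resp.\ the $\partial_{J_1}$-exactness of the $\partial_1$-image) of the extracted form follow as you indicate, and Lemma \ref{Lem:bijection} converts it into the desired metric on $\mathfrak{g}_i$.
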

\begin{proof}
Let $ \Omega_i $ is a hyperhermitian metric on $ \mathfrak{g}_i $ for $ i=1,2 $. We define\begin{equation}\label{eq:AN}
\Omega:= \Omega_1+\Omega_2+\zeta^{12}\,,
\end{equation} 
where $ (\zeta^1,\zeta^2) $ is the $ (1,0) $ coframe with respect to $ I $ dual to $ (X-\sqrt{-1}Y,Z-\sqrt{-1}W) $. Clearly, $\Omega$ is a hyperhermitian metric on $\mathfrak g$.  By   straightforward computations, using that $ \partial\zeta^{12}=0 $, we obtain that $\Omega$ is,  respectively, HKT, quaternionic balanced or quaternionic strongly Gauduchon if and only if $\Omega_i$, $i=1,2$, are so.\end{proof}

\subsection{Barberis-Fino's construction}\label{Subsec:FB}
In this subsection,  we study the behaviour of the metric conditions under a construction due to Barberis and Fino \cite{Barberis-Fino}. The idea is to take a hypercomplex Lie algebra and build a new one via a quaternionic representation.

Let $ \mathfrak{g} $ be a $ 4n $-dimensional Lie algebra and $ \rho \colon \mathfrak{g} \to \mathfrak{gl}(k,\H) $ a Lie algebra homomorphism. Define on $ T_\rho \, \mathfrak{g}:=\mathfrak{g}  \ltimes_\rho \H^k $ the Lie bracket
\[
[(X,U),(Y,V)]:=([X,Y],\rho_X(V)-\rho_Y(U))
\]
for every $ X,Y\in \mathfrak{g} $ and $ U,V\in \H^k $. If $ \mathfrak{g} $ is endowed with a hypercomplex structure $ \mathsf{H} $
\[
\tilde L(X,U):=(LX,lU)\,,
\]
defines a hypercomplex structure $ \tilde {\mathsf{H}} $ on $ T_\rho \, \mathfrak{g} $, where if $ L=aI+bJ+cK\in \mathsf{H} $ then $ l=ai+bj+ck $, being $ i,j,k $ the quaternion units in $ \H $. Finally, given   a hyperhermitian metric $ \Omega $ on $ (\mathfrak{g},\mathsf{H}) $, the metric $ \tilde \Omega=\Omega\oplus \Omega_{{\rm std}} $ is hyperhermitian on $ (T_\rho \, \mathfrak{g},\tilde {\mathsf{H}})$, where $\Omega_{{\rm std}}$ is the  standard hyperhermitian metric on $ \H^k $. We will denote objects on $ (T_\rho \, \mathfrak{g},\tilde {\mathsf{H}}) $ with a tilde. In \cite[Theorem 3.2, Proposition 3.1]{Barberis-Fino} was shown that the properties of being hyperk\"ahler, HKT, strong HKT and balanced  are preserved under the above construction provided $\rho\colon \mathfrak g \to \mathfrak{sp}(k)$. We can refine these results by proving the following.



\begin{thm}
Let $ (\mathfrak{g},\mathsf{H},g) $ be a hyperhermitian Lie algebra and  $\rho \colon \mathfrak{g} \to \mathfrak{sp}(k) $. Denote $ p\colon T_\rho \, \mathfrak{g} \to \mathfrak{g} $ the orthogonal projection.  Then,  $ \alpha_{\tilde \Omega}=p^*\alpha_\Omega  $, $ \beta_{\tilde \Omega}=p^*\beta_\Omega$, $ \Ric^{\mathrm{Ch}}_{\omega_{\tilde L}}=p^*\Ric^{\mathrm{Ch}}_{\omega_L}$ and $ \Ric^{\mathrm{Bis}}_{\omega_{\tilde L}}=p^*\Ric^{\mathrm{Bis}}_{\omega_L} $. In particular,  $  \tilde \Omega $ is quaternionic balanced (resp. quaternionic Gauduchon) if and only if $ \Omega $ is.
\end{thm}
\begin{proof}
Using  the expression of $\tilde\Omega$, 
it follows that
\[
\begin{split}
d \tilde \Omega ((X,U),(Y,V),(Z,W))=&\,  d \Omega(X,Y,Z)-\Omega_{{\rm std}}((\rho_X+\rho_X^*)V,W)-\Omega_{{\rm std}}((\rho_Y+\rho_Y^*)W,U)\\
\,\, &-\Omega_{{\rm std}}((\rho_Z+\rho_Z^*)U,V)\,.
\end{split}
\]
Therefore, since  $ \rho \colon \mathfrak{g} \to \mathfrak{sp}(k) $, we have  $ d\tilde \Omega=p^*d\Omega $. Taking $ (3,0) $-parts and applying $ \Lambda $ yields
\begin{equation}\label{eqn:Thm9.8}
\beta_{\tilde \Omega}(X,U)=(\Lambda_{\tilde {\Omega}} \partial \tilde \Omega)(X,U)=(\Lambda_{\Omega}\partial \Omega) (X )=\beta_\Omega(X)\,.
\end{equation}
Now,  using Proposition \ref{Prop:alpha} \eqref{alphaLee},  \cite[Proof of Proposition 3.1 (iii)]{Barberis-Fino} and \eqref{eqn:Thm9.8}, we deduce the claim for $\alpha_{\tilde \Omega}$. This, in turns, implies $ \Ric^{\mathrm{Ch}}_{\omega_{\tilde L}}=p^*\Ric^{\mathrm{Ch}}_{\omega_L} $ as well as $ \Ric^{\mathrm{Bis}}_{\omega_{\tilde L}}=p^*\Ric^{\mathrm{Bis}}_{\omega_L}$.
\end{proof}


\subsection{Some non-compact HKT-Einstein manifolds}\label{subeinsteinnoncpt}
We now provide examples of invariant HKT-Einstein metrics on non-unimodular,  simply-connected $4$-dimensional solvable  Lie groups, classified in \cite{Barberis}.  By dimensional reasons they all are HKT and we show here that they actually are HKT-Einstein. Except for the abelian Lie algebra we have three cases to consider. 

\begin{es}\label{es:noncpt1}
Here we take into account the Lie algebra $ \mathfrak{aff}(\C) $ of the affine motion group of $ \C $. The structure equations are:
\[
d\zeta^1=\frac{\sqrt{-1}}{2}( \zeta^{\bar 12}-\zeta^{1\bar 2})\,, \quad d\zeta^2=\frac{\sqrt{-1}}{2}(\zeta^{1\bar 1}-\zeta^{2\bar 2})\,.
\]
The  metric $ \Omega=\zeta^{12} $ satisfies $ \alpha=-\sqrt{-1}\zeta^2 $ and thus $ \partial_J \alpha=0 $, so that the metric $ \Omega $ is HKT-Einstein with vanishing Einstein constant.
\end{es}

\begin{es}
Consider the $ 4 $-dimensional solvable Lie algebra with structure equations:
\[
d\zeta^1=\frac{1}{2}\zeta^{1\bar 1}\,, \quad d\zeta^2=-\frac{1}{2}(\zeta^{12}+ \zeta^{\bar 12})\,.
\]
The  metric $ \Omega=\zeta^{12} $ satisfies $ \alpha=-\zeta^1 $ and so $ \partial_J \alpha=-\frac{1}{2} \Omega $, showing that $ \Omega $ induces 
an invariant HKT-Einstein metric with negative Einstein constant.
\end{es}

\begin{es}
We conclude with the solvable Lie algebra with structure equations:
\[
d\zeta^1=\frac{1}{2}\zeta^{1\bar 1}-\zeta^{2\bar 2}\,, \quad d\zeta^2=-\frac{1}{4}(\zeta^{12}+ \zeta^{\bar 12})\,.
\]
The metric $ \Omega=\zeta^{12} $ satisfies $ \alpha=-\frac{3}{4}\zeta^1 $
and so $ \partial_J \alpha =-\frac{3}{16} \Omega $, which shows that $ \Omega $ is HKT-Einstein with negative Einstein constant.
\end{es}

\end{document}